\documentclass[11pt]{article}  

\usepackage{amssymb}
\usepackage{amsthm}
\usepackage{amsmath}
\usepackage{graphicx}
\usepackage{subcaption}
\usepackage{setspace}
\usepackage{geometry}
\usepackage{nomencl}
\usepackage{listings}
\usepackage{url}
\usepackage{alphalph}

\newtheorem{defn}{Definition}[section]
\newtheorem{prop}{Proposition}[section]
\newtheorem{cor}{Corollary}[section]
\newtheorem{thm}{Theorem}[section]
\newtheorem{lem}{Lemma}[section]
\newtheorem{eg}{Example}[section]
\newtheorem{rmk}{Remark}[section]

\allowdisplaybreaks

\geometry{top=2cm}
\headheight = 8pt
\title{Insertion algorithm for inverting    
        the signature of a path}   
\author{Jiawei Chang\footnote{jiawei.chang@maths.ox.ac.uk, University of Oxford}, Terry Lyons\footnote{tlyons@maths.ox.ac.uk, University of Oxford}}
\begin{document}
\maketitle
\begin{abstract}
In this article we introduce the insertion method for reconstructing the path from its signature, i.e. \emph{inverting the signature of a path}. For this purpose, we prove that a converging upper bound exists for the difference between the inserted $n$-th term and the $(n+1)$-th term of the normalised signature of a smooth path, and we also show that there exists a constant lower bound for a subsequence of the terms in the normalised signature of a piecewise linear path. We demonstrate our results with numerical examples.
\end{abstract}
\section{Motivation}    
The signature of a path was first studied by K.T. Chen (\cite{chen1957integration}, \cite{chen1977iterated}). It can be understood as a collection of non-commutative iterated integrals, and has always been an interesting and essential topic in rough paths theory.\newline
\newline
The signature provides a characteristic description of a path. Chen \cite{chen1958integration} first showed that the non-commutative iterated integrals of a piecewise regular continuous path give a unqiue representation of the path up to some null modifications. Hambly and Lyons \cite{hambly2010uniqueness} furthered the result and showed that this non-commutative transform is faithful for paths of bounded variation up to tree-like pieces.\newline
\newline
Given the fact that the signature of a path is unique up to tree-like pieces \cite{hambly2010uniqueness}, it is an important and natural topic to reconstruct the path from its signature for the completeness of the theory. Lyons and Xu (\cite{lyons2017hyperbolic} and \cite{lyons2014inverting}) developed theories about inverting the signature of a $C^1$ path. Geng investigated more complicated cases and developed a method of inverting the signature of a rough path \cite{geng2017reconstruction}. Pfeffer, Seigal and Sturmfels \cite{pfeffer2018learning} demonstrated a method of computing the shortest path with a given signature level.\newline
\newline
The main aim of this article is therefore to provide practical algorithms for signature inversion for some classes of paths, and hopefully shed light on signature inversion in more complicated cases. We develop a new method of inverting the signature of the path by trying to approximate a level of the signature by a lower level of signature. We justify the motivation of the method by considering the signatures of simple paths, and then we illustrate how we can approximate a path by solving an optimisation problem and demonstrate the method for a particular set of paths.
\section{Introductory examples}
We first introduce the definition of the signature of a path.
\begin{defn}[Signature of a path]\label{signaturedefn}
Let $J$ denote a compact interval, $E$ a Banach space. Let $X:J\to E$ be a continuous path of finite $p$-variation for some $p<2$. The \emph{signature of $X$} is 
\begin{align*}
\mathbf{S}_J(X)=(1,S^1_J(X),S^2_J(X),\cdots),
\end{align*} 
where for each $n\geq 1$, $S^n_J(X)=\int_{u_1<\cdots<u_n\\u_1,\cdots,u_n\in J}\mathrm{d}X_{u_1}\otimes\cdots\otimes\mathrm{d}X_{u_n}$.
\end{defn}
Consider a path $\gamma:[0,T]\rightarrow\mathbb{R}^d$. If $\gamma$ is linear, the signature of $\gamma$ at level $n$ is $S^n_{0,T}(\gamma)=\frac{{(\gamma_T-\gamma_0)}^{\otimes n}}{n!}$ for $n\in\mathbb{N}$, so we have the linear relation
\begin{equation}
S^n_{0,T}(\gamma)\otimes(\gamma_{T}-\gamma_0)=(n+1)S^{n+1}_{0,T}(\gamma).
\end{equation}
Assume instead that $\gamma$ is a piecewise linear path, and is linear on $[0,u]$ and $[u,T]$ respectively for $u\in (0,T)$. Then by Chen's identity, $S^n_{0,T}(\gamma)=\sum_{k=0}^n\frac{{(\gamma_u-\gamma_0)}^{\otimes k}}{k!}\otimes\frac{{(\gamma_T-\gamma_u)}^{\otimes (n-k)}}{(n-k)!}$ for $n\in\mathbb{N}$. We note the following lemma for such a path.
\begin{lem}
Consider a piecewise path $\gamma:[0,T]\rightarrow\mathbb{R}^d$ that is linear on $[0,u]$ and $[u,T]$ respectively for $u\in (0,T)$. Then
\begin{equation}\label{2pieces}
(\gamma_u-\gamma_0)\otimes S_{0,T}^n(\gamma)+S^n_{0,T}(\gamma)\otimes(\gamma_T-\gamma_u)=(n+1)S_{0,T}^{n+1}(\gamma).
\end{equation}
\end{lem}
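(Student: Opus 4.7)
The plan is to prove the identity directly from the Chen-identity expansion of $S^n_{0,T}(\gamma)$ given in the paragraph just before the lemma. Write $a:=\gamma_u-\gamma_0$ and $b:=\gamma_T-\gamma_u$ for convenience, so that
$$S^n_{0,T}(\gamma)=\sum_{k=0}^n\frac{a^{\otimes k}}{k!}\otimes\frac{b^{\otimes (n-k)}}{(n-k)!}.$$
The goal is to compute $a\otimes S^n_{0,T}(\gamma)$ and $S^n_{0,T}(\gamma)\otimes b$ separately and observe that the two expansions, suitably reindexed, combine term-by-term into $(n+1)S^{n+1}_{0,T}(\gamma)$.

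First I would compute $a\otimes S^n_{0,T}(\gamma)$ by shifting the summation index $k\mapsto j=k+1$; each term becomes $\frac{a^{\otimes j}}{(j-1)!}\otimes\frac{b^{\otimes (n+1-j)}}{(n+1-j)!}$, which I rewrite as $\frac{j}{(n+1)}\cdot(n+1)\frac{a^{\otimes j}}{j!}\otimes\frac{b^{\otimes(n+1-j)}}{(n+1-j)!}$, so that the resulting sum runs over $j=1,\ldots,n+1$. Analogously, for $S^n_{0,T}(\gamma)\otimes b$ I keep the index $k=j$ and rewrite each term as $\frac{n+1-j}{n+1}\cdot(n+1)\frac{a^{\otimes j}}{j!}\otimes\frac{b^{\otimes(n+1-j)}}{(n+1-j)!}$, with $j$ running over $0,\ldots,n$.

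Adding the two sums, the end terms $j=0$ and $j=n+1$ each appear in exactly one of the two series with coefficient $n+1$, and the interior terms $1\leq j\leq n$ appear in both with combined coefficient $j+(n+1-j)=n+1$. So every term carries the factor $n+1$, giving
$$a\otimes S^n_{0,T}(\gamma)+S^n_{0,T}(\gamma)\otimes b=(n+1)\sum_{j=0}^{n+1}\frac{a^{\otimes j}}{j!}\otimes\frac{b^{\otimes(n+1-j)}}{(n+1-j)!}=(n+1)S^{n+1}_{0,T}(\gamma),$$
where the last equality is again Chen's identity applied at level $n+1$.

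There is no real obstacle here; the argument is a bookkeeping computation where the only thing to check carefully is the coefficient combinatorics after reindexing. The slightly delicate point is ensuring that the boundary cases $j=0$ and $j=n+1$ do get the correct coefficient $n+1$; this is the reason for factoring out $n+1$ as above before combining the sums.
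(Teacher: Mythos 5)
Your proof is correct and follows essentially the same route as the paper: expand both products via Chen's identity, reindex, and observe that the factorial coefficients of matching terms $\frac{(\gamma_u-\gamma_0)^{\otimes j}}{j!}\otimes\frac{(\gamma_T-\gamma_u)^{\otimes(n+1-j)}}{(n+1-j)!}$ combine to $n+1$, with the boundary terms $j=0$ and $j=n+1$ handled separately. The only difference is cosmetic bookkeeping (you factor out $n+1$ with weights $j/(n+1)$ and $(n+1-j)/(n+1)$, while the paper adds the two coefficients directly), so no further changes are needed.
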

\begin{proof}
\begin{align*}
&(\gamma_u-\gamma_0)\otimes S_{0,T}^n(\gamma)+S^n_{0,T}\otimes(\gamma_T-\gamma_u)\\
&=\sum_{k=0}^n\frac{{(\gamma_u-\gamma_0)}^{\otimes(k+1)}}{k!}\otimes\frac{{(\gamma_T-\gamma_u)}^{\otimes (n-k)}}{(n-k)!}
+\sum_{k=0}^n\frac{{(\gamma_u-\gamma_0)}^{\otimes k}}{k!}\otimes\frac{{(\gamma_T-\gamma_u)}^{\otimes (n-k+1)}}{(n-k)!}.
\end{align*}
Note for $k=1,\cdots,n$,
\begin{align*}
&\frac{{(\gamma_u-\gamma_0)}^{\otimes k}}{(k-1)!}\otimes\frac{{(\gamma_T-\gamma_u)}^{\otimes (n-k+1)}}{(n-k+1)!}+\frac{{(\gamma_u-\gamma_0)}^{\otimes k}}{k!}\otimes\frac{{(\gamma_T-\gamma_u)}^{\otimes (n-k+1)}}{(n-k)!}\\
=&(n+1)\frac{{(\gamma_u-\gamma_0)}^{\otimes k}\otimes{(\gamma_T-\gamma_u)}^{\otimes (n-k+1)}}{k!(n-k+1)!}, 
\end{align*}
and $$\frac{{(\gamma_u-\gamma_0)}^{\otimes (n+1)}}{n!}=(n+1)\frac{{(\gamma_u-\gamma_0)}^{\otimes (n+1)}}{(n+1)!},$$ $$\frac{{(\gamma_T-\gamma_u)}^{\otimes (n+1)}}{n!}=(n+1)\frac{{(\gamma_T-\gamma_u)}^{\otimes (n+1)}}{(n+1)!}.$$ Then
\begin{align*}
&(\gamma_u-\gamma_0)\otimes S_{0,T}^n(\gamma)+S^n_{0,T}(\gamma)\otimes(\gamma_T-\gamma_u)
=(n+1)S^{n+1}_{0,T}(\gamma).
\end{align*}
\end{proof}
We note that by solving the linear equation (\ref{2pieces}) for $\gamma_u-\gamma_0$ and $\gamma_T-\gamma_u$, we are able to reconstruct exactly the underlying path.\newline
We can now computationally reconstruct a path consisting of two linear pieces. If $\gamma:[0,t]\to\mathbb{R}^d$ is a path consisting of linear pieces, Let the $2d\times d^{n+1}$ matrix $A$ represent the linear mapping $\cdot\otimes S^{n}_{0,T}(\gamma)+S^n_{0,T}(\gamma)\otimes \cdot:(\mathbb{R}^d,\mathbb{R}^d)\to\mathbb{R}^{d^{n+1}}$, and $b\in\mathbb{R}^{d^{n+1}}$ represent $S^{n+1}_{0,T}(\gamma)$. Then Equation (\ref{2pieces}) can be written as, for a vector $X\in(\mathbb{R}^d,\mathbb{R}^d)$, 
\begin{align*}
AX=(n+1)b.
\end{align*}
By using \emph{singular value decomposition(SVD)} on $A$, we can obtain a simple computational algorithm that recovers $\gamma$, as shown in Example \ref{2pieceseg}. Note the computation of the signature in the example is by the C++ package \emph{Libalgebra} \cite{libalgebra}, and the matrix computation is done via \emph{LAPACK} \cite{Anderson:1999:LUG:323215}, the version of LAPACK used is provided by \emph{Intel Math Kernel Library}.
\begin{eg}\label{2pieceseg}
For a two-dimensional path $\gamma:[0,T]\rightarrow\mathbb{R}^2$, $\gamma_t=(x_t, y_t)$ where
\begin{align*}
y=\begin{cases}
  2x & \forall x\in[0,1)\\
  -\frac{2}{3}x+\frac{8}{3} & \forall x\in [1,4].
  \end{cases}
\end{align*}
By using two adjacent levels of the signature of $\gamma$, for instance, the third and fourth levels, we can fully reconstruct the underlying path $\gamma$ by solving Equation (\ref{2pieces}), as shown in Figure \ref{2piecesfig}. 
\end{eg}
\begin{figure}
\includegraphics[trim={4cm 8cm 3cm 10cm},clip,width=0.8\textwidth]{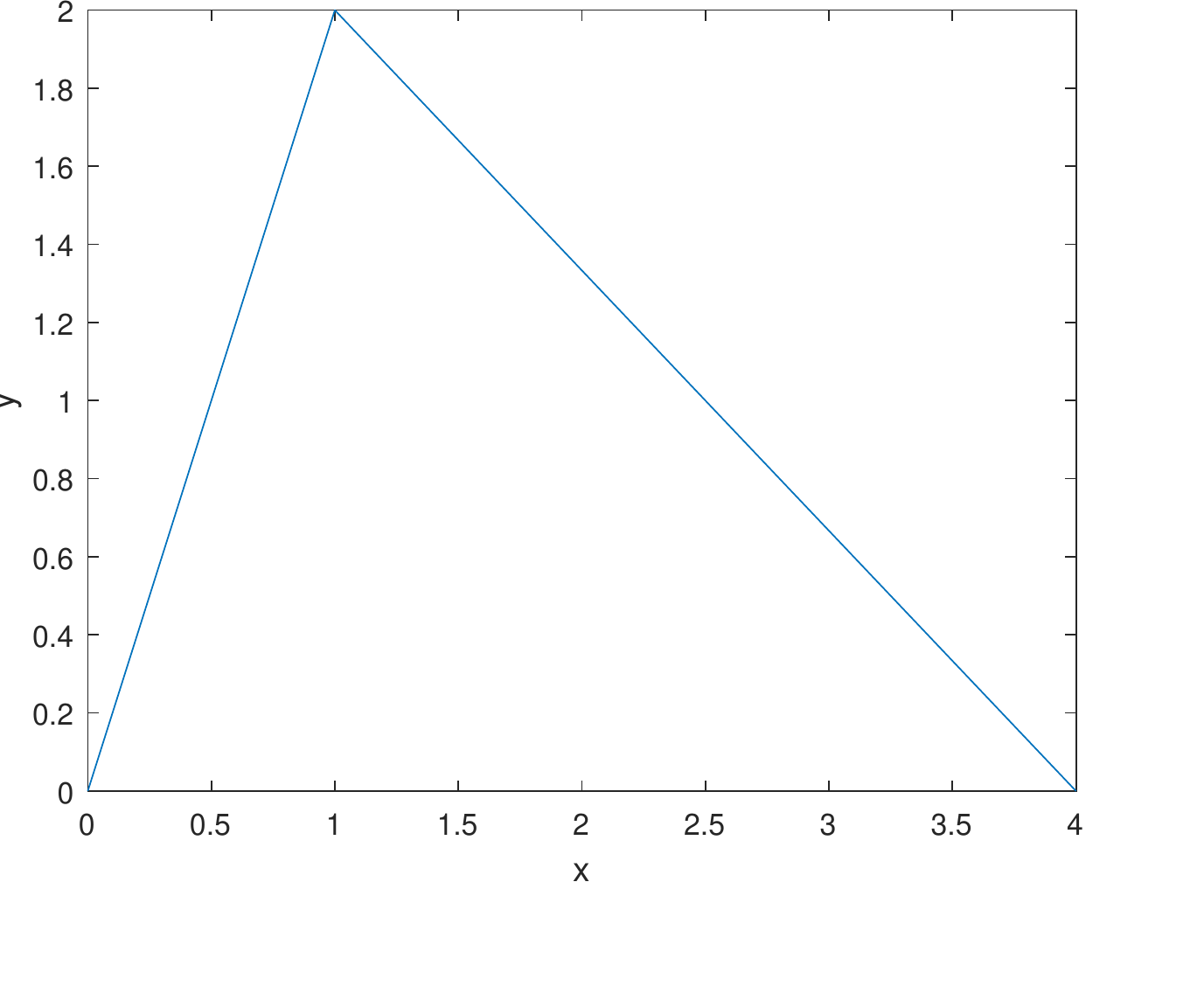}
\centering
\caption{Reconstruction of $y$ as a function of $x$ for the path in Example \ref{2pieceseg}}
\label{2piecesfig}
\end{figure}
Note from above we have shown that for a linear path or a piecewise linear path composed of two linear pieces, we are able to recover the path exactly by comparing two adjacent levels of the signature. This leads to the idea that we may as well get some information about the underlying path if we compare two adjacent levels of the signature of a more complicated path.
\section{A converging upper bound for $\left\lVert I_{p,n}\left(f\left(\theta\right)\right)-\bar{S}_{n+1}\right\rVert$}
From now on we may omit the symbol $\otimes$ in tensor multiplication for simplicity. First we define the properties of admissible norms we assume to be true for the rest of this article.
\begin{defn}\label{norm}
Let $V$ be a Banach space. Suppose the tensor powers are endowed with a tensor norm such that\newline
\begin{enumerate}
\item For all $n\geq1$, the norm of a tensor is invariant under permutation, i.e.
\begin{align*}
\left\lVert\sigma(v)\right\rVert=\left\lVert v\right\rVert\quad\forall v\in V^{\otimes n}, \forall\sigma
\in S(n),
\end{align*}
where $S(n)$ is the symmetric group on $n$ letters;\newline
\item For all $n,m\geq 1$, 
\begin{align*}
\left\lVert v\otimes\omega\right\rVert=\left\lVert v\right\rVert\left\lVert\omega\right\rVert\quad\forall v\in V^{\otimes n}, \omega\in V^{\otimes m}.
\end{align*}
\end{enumerate}
\end{defn}
In the following lemma, we give a collection of norms which satisfy the properties stated in Definition \ref{norm}.
\begin{lem}\label{satifiednorm}
Let $V=\mathbb{R}^d$ with a basis $\left\{e_1,\cdots,e_d\right\}$. Then for any element $u\in V^{\otimes n}$, we can recognise $u$ as a vector in $\mathbb{R}^{d^{n}}$, and in this case, for any $l>0$, $\ell^l$ norm satisfies the properties in Definition \ref{norm}.
\end{lem}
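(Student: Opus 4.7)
The plan is a direct coordinate calculation: fix an element $u \in V^{\otimes n}$, expand it in the tensor basis $\{e_{i_1}\otimes\cdots\otimes e_{i_n}\}_{1\le i_1,\ldots,i_n\le d}$ as
\begin{equation*}
u = \sum_{i_1,\ldots,i_n=1}^{d} u_{i_1\cdots i_n}\, e_{i_1}\otimes\cdots\otimes e_{i_n},
\end{equation*}
and set $\|u\|_l := \bigl(\sum_{i_1,\ldots,i_n} |u_{i_1\cdots i_n}|^l\bigr)^{1/l}$. Both properties in Definition \ref{norm} will then follow from elementary algebra, so the proof is really a bookkeeping exercise rather than anything subtle.

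For the permutation invariance, I would let $\sigma\in S(n)$ act on simple tensors by $\sigma(e_{i_1}\otimes\cdots\otimes e_{i_n})=e_{i_{\sigma^{-1}(1)}}\otimes\cdots\otimes e_{i_{\sigma^{-1}(n)}}$ and extend linearly. In coordinates, the coefficients of $\sigma(u)$ are exactly a relabelling of the coefficients of $u$; that is, $(\sigma(u))_{j_1\cdots j_n} = u_{j_{\sigma(1)}\cdots j_{\sigma(n)}}$. Because the $\ell^l$ norm depends only on the multiset of absolute values of the coefficients, reindexing the summation variables gives $\|\sigma(u)\|_l = \|u\|_l$ immediately.

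For the multiplicativity, I would write $v=\sum_I v_I e_I$ with $I=(i_1,\ldots,i_n)$ and $\omega=\sum_J \omega_J e_J$ with $J=(j_1,\ldots,j_m)$, so that $v\otimes\omega = \sum_{I,J} v_I\omega_J\, e_I\otimes e_J$, and the tuple $(I,J)$ ranges bijectively over multi-indices of length $n+m$. A direct computation then gives
\begin{equation*}
\|v\otimes\omega\|_l^{\,l} = \sum_{I,J} |v_I|^l|\omega_J|^l = \Bigl(\sum_I |v_I|^l\Bigr)\Bigl(\sum_J |\omega_J|^l\Bigr) = \|v\|_l^{\,l}\,\|\omega\|_l^{\,l},
\end{equation*}
and taking $l$-th roots yields the required identity $\|v\otimes\omega\|_l = \|v\|_l\,\|\omega\|_l$.

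There is no genuine obstacle: both claims reduce to the fact that the $\ell^l$ quasinorm depends only on the unordered multiset of coordinates and factorises over the product of index sets. The only point worth flagging in passing is that for $0<l<1$ the expression $\|\cdot\|_l$ is only a quasinorm, but Definition \ref{norm} only demands the two listed identities, which remain valid for every $l>0$.
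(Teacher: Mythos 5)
Your proof is correct, and it is exactly the routine coordinate verification the paper has in mind: the paper itself omits the argument, stating only that "the proof of Lemma \ref{satifiednorm} is straightforward." Both the relabelling argument for permutation invariance and the factorisation of the sum for multiplicativity are valid for every $l>0$, and your remark that for $0<l<1$ one only has a quasinorm (which still satisfies the two required identities) is an accurate caveat.
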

The proof of Lemma \ref{satifiednorm} is straightforward.\newline
\begin{defn}[Normalised signature]
Assume $\gamma$ is a continuous path with bounded variation over the interval $[s,t]$ parametrised at unit speed. Define the \emph{normalised signature} of $\gamma$ over $[s,t]$ as
\begin{align*}
\left(1,\bar{S}^1_{s,t}(\gamma),\bar{S}^2_{s,t}(\gamma),\cdots,\bar{S}^m_{s,t}(\gamma),\cdots\right),
\end{align*}
where for all $m\geq 1$, 
\begin{equation}\label{normalisedsig}
\bar{S}_{s,t}^m(\gamma):=\frac{m!\int_{s<u_1<\cdots<u_m<t}f(u_1)\cdots f(u_m)\mathrm{d}u_1\cdots\mathrm{d}u_m}{{(t-s)}^m}.
\end{equation}
\end{defn}
For simplicity, we write $\bar{S}_m:=\bar{S}_{0,1}^m(\gamma)$.
\begin{defn}[Insertion map]
Assume $\gamma:[0,1]\to\mathbb{R}^d$ is a continuous path of bounded variation parametrised at unit speed. For $p=1,\cdots,n+1$, define the mapping function $I_{p,n}:\mathbb{R}^d\rightarrow{(\mathbb{R}^d)}^{\otimes (n+1)}$ by
\begin{equation}\label{insertionmap}
I_{p,n}(x):=\int_{0<u_1<\cdots<u_n<1}f(u_1)\cdots f(u_{p-1})xf(u_p)\cdots f(u_n)n!\mathrm{d}u_1\cdots\mathrm{d}u_n,
\end{equation}
i.e. $I_{p,n}(x)$ is the function that inserts $x$ into the $p$-th position of the $n$-th normalised signature. Note that the operation of inserting $x\in\mathbb{R}^d$ into a homogeneous tensor $t\in{\mathbb{R}^d}^{\otimes n}$ at $p$-th position is well-defined.
\end{defn}
Similarly, for $p=1,\cdots,n+1$, define the mapping $R_{p,n+1}:\mathbb{R}^d\rightarrow{(\mathbb{R}^d)}^{\otimes (n+1)}$ by
\begin{equation}\label{replacemap}
R_{p,n+1}(x):=\int_{0<u_1<\cdots<u_{n+1}<1}f(u_1)\cdots f(u_{p-1})xf(u_{p+1})\cdots f(u_{n+1})(n+1)!\mathrm{d}u_1\cdots\mathrm{d}u_{n+1},
\end{equation}
i.e. $R_{p,n+1}$ replaces the $p$-th element of the $(n+1)$-th normalised signature by $x$.\newline
A simple observation is that the function $I_{p,n}$ is linear, as stated in the following lemma.
\begin{lem}
Assume $\gamma:[0,1]\rightarrow\mathbb{R}^d$ is differentiable almost surely, and the derivative $f:(0,1)\to\mathbb{R}^d$ satisfies $\left\lVert f(t)\right\rVert=1$ if defined. For all $a,b\in\mathbb{R}$, $x,y\in\mathbb{R}^d$, $n\geq1$, $p\in\{1,\cdots,n+1\}$, $I_{p,n}\left(ax+by\right)=aI_{p,n}(x)+bI_{p,n}(y)$. 
\end{lem}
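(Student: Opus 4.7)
The plan is to observe that linearity of $I_{p,n}$ is inherited pointwise under the integral sign from the bilinearity of the tensor product, so the argument is essentially a one-line unfolding of the definition.

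First I would fix any $u_1 < \cdots < u_n$ in $(0,1)$ at which $f$ is defined and look at the integrand $f(u_1)\cdots f(u_{p-1})\,\cdot\,f(u_p)\cdots f(u_n)$ as a function of the slot occupied by the inserted vector. Since the tensor product is bilinear in each factor, the map $x \mapsto f(u_1)\cdots f(u_{p-1})\,x\,f(u_p)\cdots f(u_n)$ from $\mathbb{R}^d$ to $(\mathbb{R}^d)^{\otimes(n+1)}$ is a linear map; in particular for any $a,b\in\mathbb{R}$ and $x,y\in\mathbb{R}^d$ we have
\begin{align*}
&f(u_1)\cdots f(u_{p-1})\,(ax+by)\,f(u_p)\cdots f(u_n) \\
&\qquad = a\,f(u_1)\cdots f(u_{p-1})\,x\,f(u_p)\cdots f(u_n) + b\,f(u_1)\cdots f(u_{p-1})\,y\,f(u_p)\cdots f(u_n).
\end{align*}

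Next I would integrate this identity over the simplex $\{0<u_1<\cdots<u_n<1\}$ against $n!\,\mathrm{d}u_1\cdots\mathrm{d}u_n$. Since $\gamma$ has bounded variation and unit-speed parametrisation, $\|f\|=1$ almost everywhere, so the integrand is bounded and the integral is absolutely convergent; hence one may pull the scalars $a,b$ outside and split the integral of a sum into a sum of integrals. Matching the result against the definition \eqref{insertionmap} gives
\begin{equation*}
I_{p,n}(ax+by) = a\,I_{p,n}(x) + b\,I_{p,n}(y),
\end{equation*}
as required.

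There is essentially no obstacle here: the only subtlety worth flagging explicitly is that insertion of $x$ at the $p$-th slot is a well-defined linear operation, which the preceding paragraph in the excerpt already asserts. Everything else is bilinearity of $\otimes$ plus linearity of the Lebesgue integral.
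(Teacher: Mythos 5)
Your proof is correct and follows essentially the same route as the paper's: expand $(ax+by)$ in the inserted slot using multilinearity of the tensor product and split the integral by linearity, pulling out the scalars. The extra remarks about absolute convergence and pointwise well-definedness of the insertion are harmless elaborations of what the paper's one-step computation does implicitly.
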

\begin{proof}
\begin{align*}
&I_{p,n}\left(ax+by\right)\\
=&\int_{0<u_1<\cdots<u_n<1}f(u_1)\cdots f(u_{p-1})(ax+by)f(u_p)\cdots f(u_n)n!\mathrm{d}u_1\cdots\mathrm{d}u_n\\
=&a\int_{0<u_1<\cdots<u_n<1}f(u_1)\cdots f(u_{p-1})xf(u_p)\cdots f(u_n)n!\mathrm{d}u_1\cdots\mathrm{d}u_n\\
&+b\int_{0<u_1<\cdots<u_n<1}f(u_1)\cdots f(u_{p-1})yf(u_p)\cdots f(u_n)n!\mathrm{d}u_1\cdots \mathrm{d}u_n\\
=&aI_{p,n}(x)+bI_{p,n}(y).
\end{align*}
\end{proof}
Because of the properties of the norm stated in Definition \ref{norm}, we are able to state the following property of the distances between images of the map $I_{p,n}$ which we will use later.
\begin{lem}\label{normrelation}
Assume $\gamma:[0,1]\rightarrow\mathbb{R}^d$ is differentiable almost everywhere with derivative $f:(0,1)\rightarrow\mathbb{R}^d$ such that $\left\lVert f(t)\right\rVert=1$ if defined. Then following the properties of the norm defined in Definition \ref{norm}, for any $x,y\in\mathbb{R}^d$, $n\geq 1$, $p\in\{1,\cdots,n+1\}$, 
\begin{equation}\label{normequality}
\left\lVert I_{p,n}(x)-I_{p,n}(y)\right\rVert=\left\lVert\bar{S}_n\right\rVert\left\lVert x-y\right\rVert.
\end{equation}
\end{lem}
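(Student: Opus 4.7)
The plan is to reduce the identity to the two defining properties of the admissible norm via a single carefully chosen permutation. By the linearity lemma just proved,
\begin{equation*}
I_{p,n}(x)-I_{p,n}(y)=I_{p,n}(x-y),
\end{equation*}
so it suffices to establish $\lVert I_{p,n}(z)\rVert=\lVert\bar{S}_n\rVert\lVert z\rVert$ for an arbitrary $z\in\mathbb{R}^d$, with the claim then following by setting $z=x-y$.

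Next I would introduce the permutation $\sigma\in S(n+1)$ that cycles the $p$-th tensor slot to the last slot while preserving the order of the remaining $n$ slots. Applied to the integrand of $I_{p,n}(z)$, this converts
\begin{equation*}
f(u_1)\cdots f(u_{p-1})\,z\,f(u_p)\cdots f(u_n)\longmapsto f(u_1)\cdots f(u_n)\otimes z.
\end{equation*}
Because $\sigma$ is a bounded linear endomorphism of the finite-dimensional space $(\mathbb{R}^d)^{\otimes(n+1)}$, it passes through the (Bochner) integral defining $I_{p,n}(z)$, yielding
\begin{equation*}
\sigma\!\left(I_{p,n}(z)\right)=n!\int_{0<u_1<\cdots<u_n<1}f(u_1)\cdots f(u_n)\,\mathrm{d}u_1\cdots\mathrm{d}u_n\otimes z=\bar{S}_n\otimes z,
\end{equation*}
using the definition of the normalised signature with $t-s=1$.

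Finally, I would invoke the two properties of the admissible norm in Definition \ref{norm}: property (1) gives $\lVert I_{p,n}(z)\rVert=\lVert\sigma(I_{p,n}(z))\rVert=\lVert\bar{S}_n\otimes z\rVert$, and property (2) gives $\lVert\bar{S}_n\otimes z\rVert=\lVert\bar{S}_n\rVert\lVert z\rVert$. Chaining these identities with the linearity step at the start delivers \eqref{normequality}.

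The only subtlety I anticipate is the technical justification that $\sigma$ commutes with the integral, i.e.\ $\sigma\circ\int=\int\sigma$; in finite dimensions this is immediate by expanding in a basis of $(\mathbb{R}^d)^{\otimes(n+1)}$ and noting that $\sigma$ simply relabels coordinates, so I would mention this in one line rather than dwell on it. Everything else is a direct application of the hypotheses.
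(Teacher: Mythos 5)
Your proposal is correct and follows essentially the same route as the paper: reduce to $I_{p,n}(x-y)$ by linearity, move the inserted vector to the last tensor slot using permutation invariance of the norm, and factor out $\lVert x-y\rVert$ by multiplicativity, leaving $\lVert\bar{S}_n\rVert$. The only difference is that you make explicit the permutation $\sigma$ and its commuting with the integral, which the paper's proof does implicitly in its second equality.
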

\begin{proof}
\begin{align*}
&\left\lVert I_{p,n}(x)-I_{p,n}(y)\right\rVert\\
=&\left\lVert\int_{0<u_1<\cdots<u_n<1}f(u_1)\cdots(x-y)\cdots f(u_n)n!\mathrm{d}u_1\cdots\mathrm{d}u_n\right\rVert\\
=&\left\lVert\int_{0<u_1<\cdots<u_n<1}f(u_1)\cdots f(u_n)n!\mathrm{d}u_1\cdots\mathrm{d}u_n(x-y)\right\rVert\\
=&\left\lVert\int_{0<u_1<\cdots<u_n<1}f(u_1)\cdots f(u_n)n!\mathrm{d}u_1\cdots\mathrm{d}u_n\right\rVert\left\lVert x-y\right\rVert\\
=&\left\lVert\bar{S}_n\right\rVert\left\lVert x-y\right\rVert.
\end{align*}
\end{proof}
\begin{cor}
The function $I_{p,n}$ is Lipschitz continuous.
\end{cor}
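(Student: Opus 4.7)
The corollary is an immediate consequence of Lemma \ref{normrelation}, so the plan is very short. The equality
\[
\left\lVert I_{p,n}(x) - I_{p,n}(y)\right\rVert = \left\lVert \bar{S}_n\right\rVert\,\left\lVert x - y\right\rVert
\]
established in (\ref{normequality}) already has the form of a Lipschitz estimate with equality; one only needs to recognise $\left\lVert \bar{S}_n\right\rVert$ as a finite, $(x,y)$-independent constant and declare it to be the Lipschitz constant.

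More precisely, the plan is: first fix $n\geq 1$ and $p\in\{1,\dots,n+1\}$. Since $\gamma$ is a path of bounded variation parametrised at unit speed over $[0,1]$, the integral defining $\bar{S}_n$ in (\ref{normalisedsig}) converges and $\bar{S}_n\in(\mathbb{R}^d)^{\otimes n}$ has a well-defined finite norm under the admissible tensor norms of Definition \ref{norm}. Let $L_{p,n}:=\left\lVert \bar{S}_n\right\rVert$. Then for every $x,y\in\mathbb{R}^d$, Lemma \ref{normrelation} yields
\[
\left\lVert I_{p,n}(x) - I_{p,n}(y)\right\rVert \;=\; L_{p,n}\,\left\lVert x - y\right\rVert,
\]
which is the definition of $L_{p,n}$-Lipschitz continuity.

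There is essentially no obstacle: the only thing to be careful about is the implicit hypothesis that $\left\lVert\bar{S}_n\right\rVert<\infty$, which is immediate from the bounded variation and unit-speed parametrisation assumptions on $\gamma$ together with the factorial decay in (\ref{normalisedsig}) (in fact $\left\lVert \bar{S}_n\right\rVert\le 1$ under any admissible norm of Definition \ref{norm}, since the integration domain has volume $1/n!$ and $\left\lVert f(u)\right\rVert=1$). No new computation is required beyond quoting Lemma \ref{normrelation}.
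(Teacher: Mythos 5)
Your proof is correct and is essentially the same as the paper's: both deduce Lipschitz continuity directly from the equality $\lVert I_{p,n}(x)-I_{p,n}(y)\rVert=\lVert\bar{S}_n\rVert\,\lVert x-y\rVert$ of Lemma \ref{normrelation}, with $\lVert\bar{S}_n\rVert$ as the Lipschitz constant. Your added remark that $\lVert\bar{S}_n\rVert\le 1$ is a harmless extra observation not needed for the conclusion.
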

\begin{proof}
It is a direct consequence of Equation (\ref{normequality}) that $I_{p,n}$ is Lipschitz.
\end{proof}
Intuitively it is reasonable to expect that if the derivative of the underlying path is inserted at the `correct' position into the $n$-th term in the normalised signature, the resulting tensor shall be a well-behaved approximation of the $(n+1)$-th term in the normalised signature, and as we have a finer partition of the interval, the approximation should be more accurate. We first note the following theorem by Hoeffding \cite{hoeffding1963probability}.
\begin{thm}[Hoeffding's inequality \cite{hoeffding1963probability}]Let $X_1,\cdots, X_n$ be independent random variables strictly bounded by the intervals $[a_i,b_i]$ respectively, define $S_n=\sum_{i=1}^nX_i$. Then for any $t>0$,
\begin{align*}
\mathbb{P}\left(S_n-\mathbb{E}[S_n]\geq t\right)\leq\exp\left(-\frac{2t^2}{\sum_{i=1}^n(b_i-a_i)^2}\right),
\end{align*}
\begin{align*}
\mathbb{P}\left(\left|S_n-\mathbb{E}[S_n]\right|\geq t\right)\leq 2\exp\left(-\frac{2t^2}{\sum_{i=1}^n(b_i-a_i)^2}\right).
\end{align*}
\end{thm}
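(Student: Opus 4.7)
The plan is to follow the standard exponential moment (Chernoff) approach. First I would apply Markov's inequality to the nonnegative random variable $e^{\lambda(S_n - \mathbb{E}[S_n])}$ for a free parameter $\lambda > 0$, obtaining
\begin{align*}
\mathbb{P}\left(S_n - \mathbb{E}[S_n] \geq t\right) \leq e^{-\lambda t}\, \mathbb{E}\!\left[e^{\lambda(S_n - \mathbb{E}[S_n])}\right].
\end{align*}
Using independence of the $X_i$, the moment generating function factorises as $\prod_{i=1}^{n}\mathbb{E}[e^{\lambda(X_i - \mathbb{E}[X_i])}]$, reducing the problem to controlling the MGF of a single bounded centred random variable.

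The key intermediate step, and the main technical obstacle, is Hoeffding's lemma: for any random variable $Y$ with $\mathbb{E}[Y]=0$ and $Y \in [a,b]$ almost surely,
\begin{align*}
\mathbb{E}\!\left[e^{\lambda Y}\right] \leq \exp\!\left(\frac{\lambda^2 (b-a)^2}{8}\right).
\end{align*}
To establish this, I would invoke convexity of $y\mapsto e^{\lambda y}$ on $[a,b]$ to write $e^{\lambda y} \leq \frac{b-y}{b-a}e^{\lambda a} + \frac{y-a}{b-a}e^{\lambda b}$, take expectations, and use $\mathbb{E}[Y]=0$ to obtain an upper bound depending only on $a,b,\lambda$. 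Reparametrising with $p = -a/(b-a)$ and $u = \lambda(b-a)$, this bound becomes $e^{\phi(u)}$, where $\phi(u) = -pu + \log\!\left((1-p) + p e^u\right)$. A short calculus argument then shows $\phi(0)=\phi'(0)=0$ and $\phi''(u) = \rho(1-\rho) \leq 1/4$, writing $\rho = pe^u/((1-p)+pe^u) \in (0,1)$; Taylor's theorem with integral remainder gives $\phi(u) \leq u^2/8$.

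Combining the ingredients, I obtain
\begin{align*}
\mathbb{P}\left(S_n - \mathbb{E}[S_n] \geq t\right) \leq \exp\!\left(-\lambda t + \frac{\lambda^2}{8}\sum_{i=1}^{n}(b_i-a_i)^2\right),
\end{align*}
and optimising over $\lambda > 0$ with the choice $\lambda = 4t/\sum_{i=1}^{n}(b_i-a_i)^2$ yields the stated one-sided inequality. For the two-sided version, I would run the identical argument on the independent variables $-X_i$ (each of which lies in an interval of the same length $b_i - a_i$) to bound $\mathbb{P}(S_n - \mathbb{E}[S_n] \leq -t)$, and then add the two tail estimates, picking up the factor of $2$. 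The only delicate piece is the sharp quadratic bound $\phi(u) \leq u^2/8$; the surrounding steps are routine manipulations of MGFs followed by a one-variable optimisation.
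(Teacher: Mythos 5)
Your proposal is correct, and there is nothing in the paper to compare it against: the paper states this theorem as an imported result, citing Hoeffding's 1963 article, and gives no proof of its own. Your argument --- Markov's inequality applied to $e^{\lambda(S_n-\mathbb{E}[S_n])}$, factorisation of the moment generating function by independence, Hoeffding's lemma $\mathbb{E}[e^{\lambda Y}]\leq\exp\left(\lambda^2(b-a)^2/8\right)$ proved via convexity and the bound $\phi''(u)=\rho(1-\rho)\leq 1/4$, then optimising $\lambda=4t/\sum_{i=1}^n(b_i-a_i)^2$ and applying the same estimate to $-X_i$ for the two-sided version --- is the standard proof and is essentially the argument of the cited source, with the one genuinely delicate step (the quadratic bound $\phi(u)\leq u^2/8$) handled correctly.
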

Notice since a binomial variable is a sum of independent Bernoulli variables, Hoeffding's inequality applies to binomial variables. We note the following example.
\begin{eg}\label{convergeuperboundanaeg}
Assume $\tilde{\gamma}:[0,1]\to\mathbb{R}^d$ is a linear path with derivative $g:(0,1)\to\mathbb{R}^d$ such that $\left\lVert g(t)\right\rVert=1$ for all $t\in(0,1)$. Then for any $\theta\in(0,1)$ and $q=\left\lfloor\theta(n+2)\right\rfloor$, 
\begin{align*}
\left\lVert I_{q,n}(g(\theta))-\bar{S}_{n+1}\right\rVert=\left\lVert n!\frac{\left(g(\theta)\right)^{\otimes (n+1)}}{n!}-(n+1)!\frac{\left(g(\theta)\right)^{\otimes (n+1)}}{(n+1)!}\right\rVert=0,
\end{align*}
therefore
\begin{align*}
\left\lVert I_{q,n}(g(\theta))-\bar{S}_{n+1}\right\rVert\to 0\quad\text{as}\quad n\to\infty.
\end{align*}
Now let us consider a slightly more complicated case. Assume $\{e_1,e_2\}$ is a basis of $\mathbb{R}^2$ and $\gamma:[0,1]\to\mathbb{R}^2$ is a piecewise linear path such that 
\begin{align*}
\gamma(t)= \begin{cases} 
      te_2 & t\in[0,\frac{2}{3}] \\
      (t-\frac{2}{3})e_1+\frac{2}{3}e_2 & t\in(\frac{2}{3},1].
   \end{cases}
\end{align*}
Note that the derivative $f:(0,1)\to\mathbb{R}^2$ of $\gamma$ satisfies $\lVert f(t)\rVert=1$ for all $t\in (0,1)$ where $f$ is defined. Note in this case,
\begin{align*}
\bar{S}_n=n!\sum_{k=0}^n\left(\frac{2}{3}\right)^k\frac{e_2^{\otimes k}}{k!}\otimes\left(\frac{1}{3}\right)^{n-k}\frac{e_1^{\otimes (n-k)}}{(n-k)!}.
\end{align*}
Note that if we choose $\theta=1/2$ and $p=\left\lfloor\theta(n+2)\right\rfloor$, then $f(\frac{1}{2})=e_2$, and we can write 
\begin{align*}
I_{p,n}\left(f\left(\frac{1}{2}\right)\right)=&n!\sum_{k=p-1}^n\left(\frac{2}{3}\right)^k\frac{e_2^{\otimes (k+1)}}{k!}\left(\frac{1}{3}\right)^{n-k}\frac{e_1^{\otimes (n-k)}}{(n-k)!}\\
&+n!\sum_{k=0}^{p-2}\left(\frac{2}{3}\right)^k\frac{e_2^{\otimes k}}{k!}\left(\frac{1}{3}\right)^{n-k}\frac{e_1^{\otimes (p-1-k)}\otimes e_2\otimes e_1^{\otimes (n-p+1)}}{(n-k)!},
\end{align*}
then
\begin{align*}
&I_{p,n}\left(f\left(\frac{1}{2}\right)\right)-\bar{S}_{n+1}\\
=&\sum_{k=p-1}^n\left(\frac{2}{3}\right)^{k+1}\left(\frac{1}{3}\right)^{n-k}\frac{(n+1)!}{(k+1)!(n-k)!}\left(\frac{3}{2}\frac{k+1}{n+1}-1\right)e_2^{\otimes (k+1)}e_1^{\otimes (n-k)}\\
&+\sum_{k=0}^{p-2}\left(\frac{2}{3}\right)^k\left(\frac{1}{3}\right)^{n-k}\frac{n!}{k!(n-k)!}e_2^{\otimes k}e_1^{\otimes (p-1-k)}\otimes e_2\otimes e_1^{\otimes (n-p+1)}\\
&-\sum_{k=0}^{p-1}\left(\frac{2}{3}\right)^k\left(\frac{1}{3}\right)^{n+1-k}\frac{(n+1)!}{k!(n+1-k)!}e_2^{\otimes k}e_1^{\otimes (n+1-k)}.
\end{align*}
Hence
\begin{align}\label{exampleconvergenceupperboundinsertionmiddle}
\left\lVert I_{p,n}\left(f\left(\frac{1}{2}\right)\right)-\bar{S}_{n+1}\right\rVert
\leq&\sum_{k=p}^{n+1}\left(\frac{2}{3}\right)^k\left(\frac{1}{3}\right)^{n+1-k}\frac{(n+1)!}{k!(n+1-k)!}\left|\frac{3}{2}\frac{k}{n+1}-1\right|\nonumber\\
&+\sum_{k=0}^{p-2}\left(\frac{2}{3}\right)^k\left(\frac{1}{3}\right)^{n-k}\frac{n!}{k!(n-k)!}\nonumber\\
&+\sum_{k=0}^{p-1}\left(\frac{2}{3}\right)^k\left(\frac{1}{3}\right)^{n+1-k}\frac{(n+1)!}{k!(n+1-k)!}.
\end{align}
Let us investigate the binomial sums on the right-hand side of (\ref{exampleconvergenceupperboundinsertionmiddle}) respectively. Note 
\begin{align}\label{binomialeg}
&\sum_{k=p}^{n+1}\left(\frac{2}{3}\right)^k\left(\frac{1}{3}\right)^{n+1-k}\frac{(n+1)!}{k!(n+1-k)!}\left|\frac{3}{2}\frac{k}{n+1}-1\right|\nonumber\\
\leq&\sum_{k=0}^{n+1}\left(\frac{2}{3}\right)^k\left(\frac{1}{3}\right)^{n+1-k}\frac{(n+1)!}{k!(n+1-k)!}\left|\frac{3}{2}\frac{k}{n+1}-1\right|\nonumber\\
=&\sum_{\substack{t=k/(n+1)\\k=0,\cdots,n+1}}\left(\frac{2}{3}\right)^{(n+1)t}\left(\frac{1}{3}\right)^{(n+1)(1-t)}\frac{(n+1)!}{\left((n+1)t\right)!\left((n+1)(1-t)\right)!}\left|\frac{3}{2}t-1\right|.
\end{align}
Assume $X\sim$Binomial$(n+1,2/3)$. Note that (\ref{binomialeg}) is the expectation of a function of the random variable $Y:=X/(n+1)$, and
\begin{align*}
\mathbb{E}[Y]=\frac{1}{n+1}\mathbb{E}[X]=\frac{2}{3},\quad \text{Var}[Y]=\frac{1}{(n+1)^2}\text{Var}[X]=\frac{2}{9}\frac{1}{n+1},
\end{align*}
hence we can see that as $n$ increases, the distribution of $Y$ will be mostly concentrated around $\mathbb{E}[Y]$, and since $\left|\frac{3}{2}Y-1\right|=0$ at $Y=\mathbb{E}[Y]$, the value of the sum of (\ref{binomialeg}) would converge to zero as $n$ increases. For a more formal argument, we have for any $\lambda>0$, 
\begin{align*}
&\mathbb{P}\left(\left|Y-\frac{2}{3}\right|\geq\frac{\lambda}{3}\sqrt{\frac{2}{n+1}}\right)\\
=&\mathbb{P}\left(\left|X-\frac{2}{3}(n+1)\right|\geq\frac{\lambda}{3}\sqrt{2(n+1)}\right)\\
\leq&2\exp\left(-\frac{4}{9}\lambda^2\right),
\end{align*}
where the last inequality comes from Hoeffding's inequality.\newline
Then for any $\lambda>0$,
\begin{align*}
&\sum_{k=p}^{n+1}\left(\frac{2}{3}\right)^k\left(\frac{1}{3}\right)^{n+1-k}\frac{(n+1)!}{k!(n+1-k)!}\left|\frac{3}{2}\frac{k}{n+1}-1\right|\\
\leq&\sum_{\substack{t=k/(n+1)\\k=0,\cdots,n+1\\\left|t-\frac{2}{3}\right|<\frac{\lambda}{3}\sqrt{\frac{2}{n+1}}}}\left(\frac{2}{3}\right)^{(n+1)t}\left(\frac{1}{3}\right)^{(n+1)(1-t)}\frac{(n+1)!}{((n+1)t)!((n+1)(1-t))!}\left|\frac{3}{2}t-1\right|\\
&+\sum_{\substack{t=k/(n+1)\\k=0,\cdots,n+1\\\left|t-\frac{2}{3}\right|\geq\frac{\lambda}{3}\sqrt{\frac{2}{n+1}}}}\left(\frac{2}{3}\right)^{(n+1)t}\left(\frac{1}{3}\right)^{(n+1)(1-t)}\frac{(n+1)!}{((n+1)t)!((n+1)(1-t))!}\left|\frac{3}{2}t-1\right|\\
\leq&\frac{\lambda}{\sqrt{2(n+1)}}+2\mathbb{P}\left(\left|Y-\frac{2}{3}\right|\geq\frac{\lambda}{3}\sqrt{\frac{2}{n+1}}\right)\\
\leq&\frac{\lambda}{\sqrt{2(n+1)}}+4\exp\left(-\frac{4}{9}\lambda^2\right).
\end{align*}
Note that for a given $n>0$, $\lambda/\sqrt{2(n+1)}$ is a strictly increasing linear function of $\lambda$ from $(0,\infty)$ to $(0,\infty)$, and $4\exp\left(-\frac{4}{9}\lambda^2\right)$ is a strictly decreasing function of $\lambda$ from $(0,\infty)$ to $(0,4)$. So for each $n$, there exists $\lambda_n>0$ such that 
\begin{align}\label{satisfyinglambda}
\frac{\lambda_n}{\sqrt{2(n+1)}}=4\exp\left(-\frac{4}{9}\lambda_n^2\right).
\end{align}
Differentiating (\ref{satisfyinglambda}) with respect to $n$ gives
\begin{align*}
\frac{\partial \lambda_n}{\partial n}(2(n+1))^{\frac{1}{2}}-\lambda_n(2(n+1))^{-\frac{3}{2}}=-\frac{32}{9}\frac{\partial \lambda_n}{\partial n}\lambda_n\exp\left(-\frac{4}{9}\lambda_n^2\right),
\end{align*}
\begin{align*}
\frac{\partial\lambda_n}{\partial n}\left(\left(2(n+1)\right)^{\frac{1}{2}}+\frac{32}{9}\lambda_n\exp\left(-\frac{4}{9}\lambda_n^2\right)\right)=\lambda_n\left(2(n+1)\right)^{-\frac{3}{2}},
\end{align*}
therefore $\frac{\partial \lambda_n}{\partial n}>0$, and $\lambda_n$ is a strictly increasing function in $n$ which tends to infinity, so $4\exp\left(-4/9\lambda^2_n\right)$ is a strictly decreasing function in $n$, and
\begin{align*}
\sum_{k=p}^{n+1}\left(\frac{2}{3}\right)^k\left(\frac{1}{3}\right)^{n+1-k}\frac{(n+1)!}{k!(n+1-k)!}\left|\frac{3}{2}\frac{k}{n+1}-1\right|\leq&\frac{2\lambda_n}{\sqrt{2(n+1)}}\\
=&8\exp\left(-\frac{4}{9}\lambda^2_n\right)\\
\to& 0
\end{align*}
as $n$ goes to infinity.\newline
For the other two binomial sums in (\ref{exampleconvergenceupperboundinsertionmiddle}), by Hoeffding's inequality, 
\begin{align*}
&\sum_{k=0}^{p-2}\left(\frac{2}{3}\right)^k\left(\frac{1}{3}\right)^{n-k}\frac{n!}{k!(n+1-k)!}+\sum_{k=0}^{p-1}\left(\frac{2}{3}\right)^{k}\left(\frac{1}{3}\right)^{n+1-k}\frac{(n+1)!}{k!(n+1-k)!}\\
\leq&\exp\left(-C_1n+B_1\right)+\exp\left(-C_2n+B_2\right),
\end{align*}
where $C_1>0$, $C_2>0$, $B_1$ and $B_2$ are some constants. Therefore
\begin{align*}
\left\lVert I_{p,n}\left(\frac{1}{2}\right)-\bar{S}_{n+1}\right\rVert\leq\frac{2\lambda_n}{\sqrt{2(n+1)}}+\exp\left(-C_1n+B_1\right)+\exp\left(-C_2n+B_2\right)\to 0
\end{align*}
as $n\to\infty$. Note since $2\lambda_n/\sqrt{2(n+1)}$ is decreasing in $n$, and $\lambda_n$ is increasing in $n$, the rate of increasing of $\lambda_n$ must be smaller than the increasing rate of $\sqrt{2(n+1)}$, therefore the upper bound obtained for $\left\lVert I_{p,n}\left(\frac{1}{2}\right)-\bar{S}_{n+1}\right\rVert$ decreases at a rate slower than $O(1/\sqrt{n})$.
\end{eg}
Example \ref{convergeuperboundanaeg} has inspired us that we can use the tail behaviour of the binomial distribution to prove that $\left\lVert I_{p,n}\left(f(\theta)\right)-\bar{S}_{n+1}\right\rVert$ converges to $0$.
\begin{thm}\label{thmconvergetozeroupperbound}
Suppose $\gamma:[0,1]\to\mathbb{R}^d$ is a tree-reduced continuous bounded-variation path, and the derivative of $\gamma$ $f:(0,1)\to\mathbb{R}^d$ is defined almost everywhere, and $\lVert f(t)\rVert=1$ for all $t\in(0,1)$ if defined. Assume $\gamma$ is linear on $[s,t]$ for $0\leq s<t\leq 1$, and let $\theta\in(s,t)$. If we choose $p=\left\lfloor\theta(n+2)\right\rfloor$, then
\begin{align*}
\left\lVert I_{p,n}\left(f(\theta)\right)-\bar{S}_{n+1}\right\rVert\to 0\quad\text{as}\quad n\to\infty,
\end{align*}
and the rate of convergence of the upper bound obtained for $\left\lVert I_{p,n}\left(f(\theta)\right)-\bar{S}_{n+1}\right\rVert$ is slower than $O\left(1/\sqrt{n+1}\right)$.
\end{thm}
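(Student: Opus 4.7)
The strategy is to generalize the computation of Example \ref{convergeuperboundanaeg} from a two-piece piecewise linear path to an arbitrary tree-reduced bounded-variation path that is linear on a subinterval $[s,t]\ni\theta$. Throughout, set $v:=f(\theta)$; since $\gamma$ is linear on $[s,t]$, we have $f\equiv v$ there. The first step is to apply Chen's identity to the factorisation $\gamma=\gamma|_{[0,s]}\ast\gamma|_{[s,t]}\ast\gamma|_{[t,1]}$ to obtain
\begin{align*}
\bar{S}_{n+1}=(n+1)!\sum_{k_1+k_2+k_3=n+1} S^{k_1}_{0,s}(\gamma)\otimes\frac{(t-s)^{k_2}\,v^{\otimes k_2}}{k_2!}\otimes S^{k_3}_{t,1}(\gamma)=:\sum_{k_1+k_2+k_3=n+1}T_{k_1,k_2,k_3}.
\end{align*}
Analogously, I would decompose the simplex integral defining $I_{p,n}(v)$ according to the triple $(j_1,j_2,j_3)$ counting how many of $u_1<\cdots<u_n$ fall in each subinterval. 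Because $f\equiv v$ on $[s,t]$, the ``middle-insertion'' case ($j_1+1\leq p\leq j_1+j_2+1$) produces $n!\,S^{j_1}_{0,s}\otimes(t-s)^{j_2}v^{\otimes(j_2+1)}/j_2!\otimes S^{j_3}_{t,1}$, which after reindexing $(k_1,k_2,k_3):=(j_1,j_2+1,j_3)$ equals $\tfrac{k_2}{(n+1)(t-s)}T_{k_1,k_2,k_3}$ for exactly those triples with $p-k_2\leq k_1\leq p-1$. The remaining cases $p\leq j_1$ (``left insertion'') and $p>j_1+j_2+1$ (``right insertion'') produce tensors with $v$ inserted inside the $[0,s]$ or $[t,1]$ signature factor.

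Next I would bound $\|I_{p,n}(v)-\bar{S}_{n+1}\|$ by the triangle inequality as the sum of three pieces: (i) the main term $\sum_{(k_1,k_2,k_3)\text{ middle}}\bigl|\tfrac{k_2}{(n+1)(t-s)}-1\bigr|\,\|T_{k_1,k_2,k_3}\|$; (ii) the non-middle contributions to $\bar S_{n+1}$, i.e.\ $\sum_{k_1\geq p\text{ or }k_1+k_2\leq p-1}\|T_{k_1,k_2,k_3}\|$; and (iii) the norms of the left and right insertion pieces of $I_{p,n}(v)$. Using $\|v\|=1$, $\|S^k_{0,s}(\gamma)\|\leq s^k/k!$, and $\|S^k_{t,1}(\gamma)\|\leq (1-t)^k/k!$ (which follow from Definition \ref{norm} combined with $\|f\|\equiv 1$), each summand is dominated by a trinomial weight $\binom{N}{k_1,k_2,k_3}s^{k_1}(t-s)^{k_2}(1-t)^{k_3}$, with $N=n+1$ for $\bar S_{n+1}$ and $N=n$ for the left/right pieces of $I_{p,n}(v)$. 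Hence (i)--(iii) can be recast as expectations/probabilities for a multinomial $(Y_1,Y_2,Y_3)\sim\mathrm{Multinomial}(N;s,t-s,1-t)$.

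Finally, since $s<\theta<t$ and $p=\lfloor\theta(n+2)\rfloor$, for $n$ large we have $\mathbb{E}Y_1<p$ and $\mathbb{E}(Y_1+Y_2)>p$, so the events governing (ii) and (iii)---namely $\{Y_1\geq p\}$ and $\{Y_1+Y_2\leq p-1\}$---are large-deviation events for the binomials $Y_1\sim\mathrm{Binomial}(N,s)$ and $Y_1+Y_2\sim\mathrm{Binomial}(N,t)$, which Hoeffding's inequality bounds by $\exp(-\Omega(n))$. For the main term (i), I would mimic the argument in Example \ref{convergeuperboundanaeg}: writing $Z:=Y_2/(n+1)$ with $\mathbb{E}Z=t-s$ and $\mathrm{Var}(Z)=O(1/n)$, splitting the range $|Z-(t-s)|<\lambda/\sqrt{n+1}$ against its complement yields a bound of the form $O(\lambda/\sqrt{n+1})+\exp(-c\lambda^2)$; balancing at $\lambda_n$ satisfying $\lambda_n/\sqrt{n+1}=\mathrm{const}\cdot\exp(-c\lambda_n^2)$ produces convergence to $0$ at a rate slower than $O(1/\sqrt{n+1})$, exactly as in the example. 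The main obstacle is the careful bookkeeping in the decomposition step: in particular, showing that the left and right insertion contributions, where $v$ lands inside the $[0,s]$ or $[t,1]$ signature rather than into a pure $v^{\otimes k}$ block, admit the clean trinomial bound above. This relies on Definition \ref{norm} and Lemma \ref{normrelation}-style arguments applied to the partial insertion tensors.
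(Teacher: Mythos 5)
Your proposal is correct and follows essentially the same route as the paper's proof: Chen's identity over $[0,s]\cup[s,t]\cup[t,1]$, a case analysis on whether the insertion lands in the middle linear block or inside the $[0,s]$/$[t,1]$ factors, the simplex-volume bound $\lVert S^k_{u,v}(\gamma)\rVert\leq (v-u)^k/k!$ to reduce everything to binomial (equivalently, multinomial marginal) sums, Hoeffding for the tail events, and the balancing sequence $\lambda_n$ from Example \ref{convergeuperboundanaeg} for the main term, giving the rate slower than $O(1/\sqrt{n+1})$. Your multinomial bookkeeping and the ratio $\tfrac{k_2}{(n+1)(t-s)}$ are just a repackaging of the paper's binomial sums in (\ref{upperboundforbinomialsums}), so no substantive difference.
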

\begin{proof}
Since $\gamma$ is tree-reduced, by Boedihardjo and Geng \cite{boedihardjo2018non}, there exists $N\in\mathbb{N}$ such that for all $n\geq N$, $S^n_{0,1}(\gamma)\neq 0$. We now only consider the case when $n\geq N$.\newline
By Chen's identity,
\begin{align*}
\bar{S}_n&=n!\sum_{k_1+k_2+k_3=n}S^{k_1}_{0,s}(\gamma)\otimes S^{k_2}_{s,t}(\gamma)\otimes S^{k_3}_{t,1}(\gamma)\\
&=n!\sum_{k_1+k_2+k_3=n}S^{k_1}_{0,s}(\gamma)\otimes\frac{(t-s)^{k_2}{f(\theta)}^{\otimes k_2}}{k_2!}\otimes S^{k_3}_{t,1}(\gamma).
\end{align*}
Define the sets $$I:=\left\{(k_1,k_2,k_3):k_1+k_2+k_3=n,k_i=0,\cdots, n,\, i=1,2,3\right\}$$ and $$J:=\left\{(k_1,k_2,k_3):(k_1,k_2,k_3)\in I, k_1\leq p-1,k_3\leq n+1-p\right\}.$$ For $x\in\mathbb{R}^d$, let $x\uparrow S^k_{u,v}(\gamma)$ denote the resulting tensor of inserting $x$ into any position of $S^k_{u,v}(\gamma)$ for any $0\leq u<v\leq 1$. Then
\begin{align*}
I_{p,n}\left(f(\theta)\right)=&\sum_{(k_1,k_2,k_3)\in J}n!S^{k_1}_{0,s}(\gamma)\otimes\frac{(t-s)^{k_2}{f(\theta)}^{\otimes k_2+1}}{k_2!}\otimes S^{k_3}_{t,1}(\gamma)\\
&+\sum_{(k_1,k_2,k_3)\in I, k_1\geq p}n!\left(f(\theta)\uparrow S^{k_1}_{0,s}(\gamma)\right)\otimes\frac{(t-s)^{k_2}{f(\theta)}^{\otimes k_2}}{k_2!}\otimes S^{k_3}_{t,1}(\gamma)\\
&+\sum_{(k_1,k_2,k_3)\in I, k_3\geq n-p+2}n!S^{k_1}_{s,t}(\gamma)\otimes\frac{(t-s)^{k_2}{f(\theta)}^{\otimes k_2}}{k_2!}\otimes\left(f(\theta)\uparrow S^{k_3}_{t,1}(\gamma)\right).
\end{align*}
Note also
\begin{align*}
\bar{S}_{n+1}=&\sum_{k_1+k_2+k_3=n+1}(n+1)!S^{k_1}_{0,s}(\gamma)\otimes\frac{(t-s)^{k_2}{f(\theta)}^{\otimes k_2}}{k_2!}\otimes S^{k_3}_{t,1}(\gamma)\\
=&\sum_{(k_1,k_2,k_3)\in J}(n+1)!S^{k_1}_{0,s}(\gamma)\frac{(t-s)^{k_2+1}f(\theta)^{\otimes k_2+1}}{(k_2+1)!}\otimes S^{k_3}_{t,1}(\gamma)\\
&+\sum_{\substack{k_1+k_2+k_3=n+1\\k_1\geq p}}(n+1)!S^{k_1}_{0,s}(\gamma)\otimes\frac{(t-s)^{k_2}{f(\theta)}^{\otimes k_2}}{k_2!}\otimes S^{k_3}_{t,1}(\gamma)\\
&+\sum_{\substack{k_1+k_2+k_3=n+1\\k_3\geq n-p+2}}(n+1)!S^{k_1}_{0,s}(\gamma)\otimes\frac{(t-s)^{k_2}{f(\theta)}^{\otimes k_2}}{k_2!}\otimes S^{k_3}_{t,1}(\gamma).
\end{align*}
Then since for any $0\leq u<v\leq 1$ and any $k\geq 1$, $\left\lVert S^k_{u,v}(\gamma)\right\rVert\leq (v-u)^k/k!$, we have
\begin{align}\label{upperboundforbinomialsums}
&\left\lVert I_{p,n}\left(f(\theta)\right)-\bar{S}_{n+1}\right\rVert\nonumber\\
\leq &\sum_{0\leq k\leq n+1}\frac{(n+1)!}{k!(n+1-k)!}(t-s)^{k}(1-t+s)^{n+1-k}\left|\frac{k}{n+1}\frac{1}{t-s}-1\right|\nonumber\\
&+\sum_{p\leq k\leq n}\frac{n!}{k!(n-k)!}s^k(1-s)^{n-k}\nonumber\\
&+\sum_{n-p+2\leq k\leq n}\frac{n!}{k!(n-k)!}(1-t)^kt^{n-k}\nonumber\\
&+\sum_{p\leq k\leq n+1}\frac{(n+1)!}{k!(n+1-k)!}s^k(1-s)^{n+1-k}\nonumber\\
&+\sum_{n-p+2\leq k\leq n+1}\frac{(n+1)!}{k!(n+1-k)!}(1-t)^kt^{n+1-k}.
\end{align}
Note
\begin{align}\label{functionindepofn}
&\sum_{0\leq k\leq n+1}\frac{(n+1)!}{k!(n+1-k)!}(t-s)^{k}(1-t+s)^{n+1-k}\left|\frac{k}{n+1}\frac{1}{t-s}-1\right|\nonumber\\
=&\sum_{\substack{r=k/(n+1)\\k=0,\cdots, n+1}}\frac{(n+1)!}{\left((n+1)r\right)!\left((n+1)(1-r)\right)!}(t-s)^{(n+1)r}(1-t+s)^{(n+1)(1-r)}\left|\frac{1}{t-s}r-1\right|,
\end{align}
then if we assume $X\sim$Binomial$(n+1,t-s)$ and define $Y:=X/(n+1)$, notice that (\ref{functionindepofn}) is the expectation of the function $\left |Y/(t-s)-1\right|$, and
\begin{align*}
\mathbb{E}[Y]=t-s,\quad \text{Var}[Y]=\frac{(t-s)(1-t+s)}{n+1}.
\end{align*}
By Hoeffding's inequality, for any $\lambda>0$, 
\begin{align*}
&\mathbb{P}\left(\left|Y-(t-s)\right|\geq\lambda\sqrt{\frac{(t-s)(1-t+s)}{n+1}}\right)\\
=&\mathbb{P}\left(\left|X-(t-s)(n+1)\right|\geq\lambda\sqrt{(t-s)(1-t+s)(n+1)}\right)\\
\leq&2\exp\left(-2(t-s)(1-t+s)\lambda^2\right),
\end{align*}
therefore by considering the cases when $\left|Y-(t-s)\right|<\lambda\sqrt{(t-s)(1-t+s)/(n+1)}$ and $\left|Y-(t-s)\right|\geq\lambda\sqrt{(t-s)(1-t+s)/(n+1)}$ respectively, we have
\begin{align*}
&\sum_{0\leq k\leq n+1}\frac{(n+1)!}{k!(n+1-k)!}(t-s)^{k}(1-t+s)^{n+1-k}\left|\frac{k}{n+1}\frac{1}{t-s}-1\right|\\
\leq&\lambda\sqrt{\frac{1-t+s}{(t-s)(n+1)}}\\
+&\max\left(1,\left(\frac{1}{t-s}-1\right)\right)\mathbb{P}\left(\left| Y-(t-s)\right|\geq \lambda\sqrt{\frac{(t-s)(1-t+s)}{n+1}}\right)\\
\leq&\lambda\sqrt{\frac{1-t+s}{(t-s)(n+1)}}+2\max\left(1,\left(\frac{1}{t-s}-1\right)\right)\exp\left(-2(t-s)(1-t+s)\lambda^2\right).
\end{align*}
By similar arguments as in Example \ref{convergeuperboundanaeg}, there exists a strictly increasing sequence $\left(\lambda_n\right)_n$ such that for each $n\in\mathbb{N}$, $\lambda_n>0$, and 
\begin{equation}\label{limitofparameter}
\lambda_n\sqrt{\frac{1-t+s}{(t-s)(n+1)}}=2\max\left(1,\left(\frac{1}{t-s}-1\right)\right)\exp\left(-2(t-s)(1-t+s)\lambda_n^2\right).
\end{equation}
Suppose $\lambda_n\to\lambda^*<\infty$ as $n\to\infty$. Then taking limits on both sides of Equation (\ref{limitofparameter}) gives
\begin{align*}
0=2\max\left(1,\left(\frac{1}{t-s}-1\right)\right)\exp\left(-2(t-s)(1-t+s){\lambda^*}^2\right),
\end{align*}
which does not hold if $\lambda^*$ is finite. Hence we must have $\lambda^*=\infty$. Therefore
\begin{align*}
&\sum_{0\leq k\leq n+1}\frac{(n+1)!}{k!(n+1-k)!}(t-s)^{k}(1-t+s)^{n+1-k}\left|\frac{k}{n+1}\frac{1}{t-s}-1\right|\\
\leq &2\lambda_n\sqrt{\frac{1-t+s}{(t-s)(n+1)}}\\
=&4\max\left(1,\left(\frac{1}{t-s}-1\right)\right)\exp\left(-2(t-s)(1-t+s)\lambda_n^2\right)\to 0
\end{align*}
as $n\to\infty$. If we define $X_1\sim$Binomial$(n,s)$, $X_2\sim$Binomial$(n,1-t)$, $X_3\sim$Binomial$(n+1,s)$, and $X_4\sim$Binomial$(n+1,1-t)$, by applying Hoeffding's inequality to the other binomial sums on the right-hand side of (\ref{upperboundforbinomialsums}), there exists $M\in\mathbb{N}$ such that for all $n\geq M$, 
\begin{align*}
&\left\lVert I_{p,n}\left(f(\theta)\right)-\bar{S}_{n+1}\right\rVert\\
\leq & 2\lambda_n\sqrt{\frac{1-t+s}{(t-s)(n+1)}}+\mathbb{P}\left(X_1\geq p\right)\\
&+\mathbb{P}\left(X_2\geq n-p+2\right)+\mathbb{P}\left(X_3\geq p\right)+\mathbb{P}\left(X_4\geq n-p+2\right)\\
\leq &2\lambda_n\sqrt{\frac{1-t+s}{(t-s)(n+1)}}+\exp\left(-C_1n+B_1\right)\\
&+\exp\left(-C_2n+B_2\right)+\exp\left(-C_3n+B_3\right)+\exp\left(-C_4n+B_4\right),
\end{align*}
where we have used Hoeffding's inequality and $C_1>0, C_2>0, C_3>0, C_4>0$, $B_1,B_2,B_3$ and $B_4$ are some constants. Therefore
\begin{align*}
\left\lVert I_{p,n}\left(f(\theta)\right)-\bar{S}_{n+1}\right\rVert\to 0\quad\text{as}\quad n\to\infty.
\end{align*}
Moreover, the rate of convergence of the upper bound obtained for $\left\lVert I_{p,n}\left(f(\theta)\right)-\bar{S}_{n+1}\right\rVert$ is slower than $O\left(1/\sqrt{(t-s)(n+1)}\right)$.
\end{proof}
We also have the following example as a numerical demonstration of our claim. 
\begin{eg}
Assume $\gamma\in\mathbb{R}^2$ is a piecewise linear path which is an approximation to the quadratic path over the unit interval $[0,1]$ parametrised at unit speed, i.e. $\gamma^{(2)}(t)=(\gamma^{(1)}(t))^2$ for all $t\in[0,1]$. Fixing $\theta=0.3$, if we compute the difference $\left\rVert I_{p,n}(f(\theta))-\bar{S}_{n+1}\right\rVert$ under the $\ell^1$ norm and $\ell^2$ norm, we obtain Figure \ref{nextleveldiffl1} and \ref{nextleveldiffl2}. From the figures we can see that under both the $\ell^1$ norm and $\ell^2$ norm, the difference between the $n$-th term in the normalised signature with $f(\theta)$ inserted at the $p$-th position and the $(n+1)$-th term in the normalised signature decreases as $n$ increases, although not monotonically. The reason for the non-monotonicity is that given  $p=\left\lfloor\theta(n+2)\right\rfloor$, $I_{p,n}(f(p/(n+2)))$ is a good approximation of $\bar{S}_{n+1}$, while $I_{p,n}(f(\theta))$ may not be as a good approximation as $I_{p,n}(f(p/(n+2)))$ if $\theta(n+2)$ is not an integer, hence we may observe small increases when $p/(n+2)$ is a bit far from $\theta$.\newline
As a justification, we now choose $\theta=0.5$ and $p=\left\lfloor0.5(n+2)\right\rfloor$ for $n=4,6,8,10,12$, and plot $\left\lVert I_{p,n}(f(\theta))-\bar{S}_{n+1}\right\rVert_1$ in Figure \ref{nextleveldiffl10.5}. In this case since the signature level $n$ used is even, $0.5(n+2)$ is an integer, and from the figure we can see that we get monotone convergence as $n$ increases in this case. 
\end{eg}
\begin{figure}[h!]
\centering
\includegraphics[trim={2cm 8cm 3cm 6cm},clip,width=0.8\textwidth]{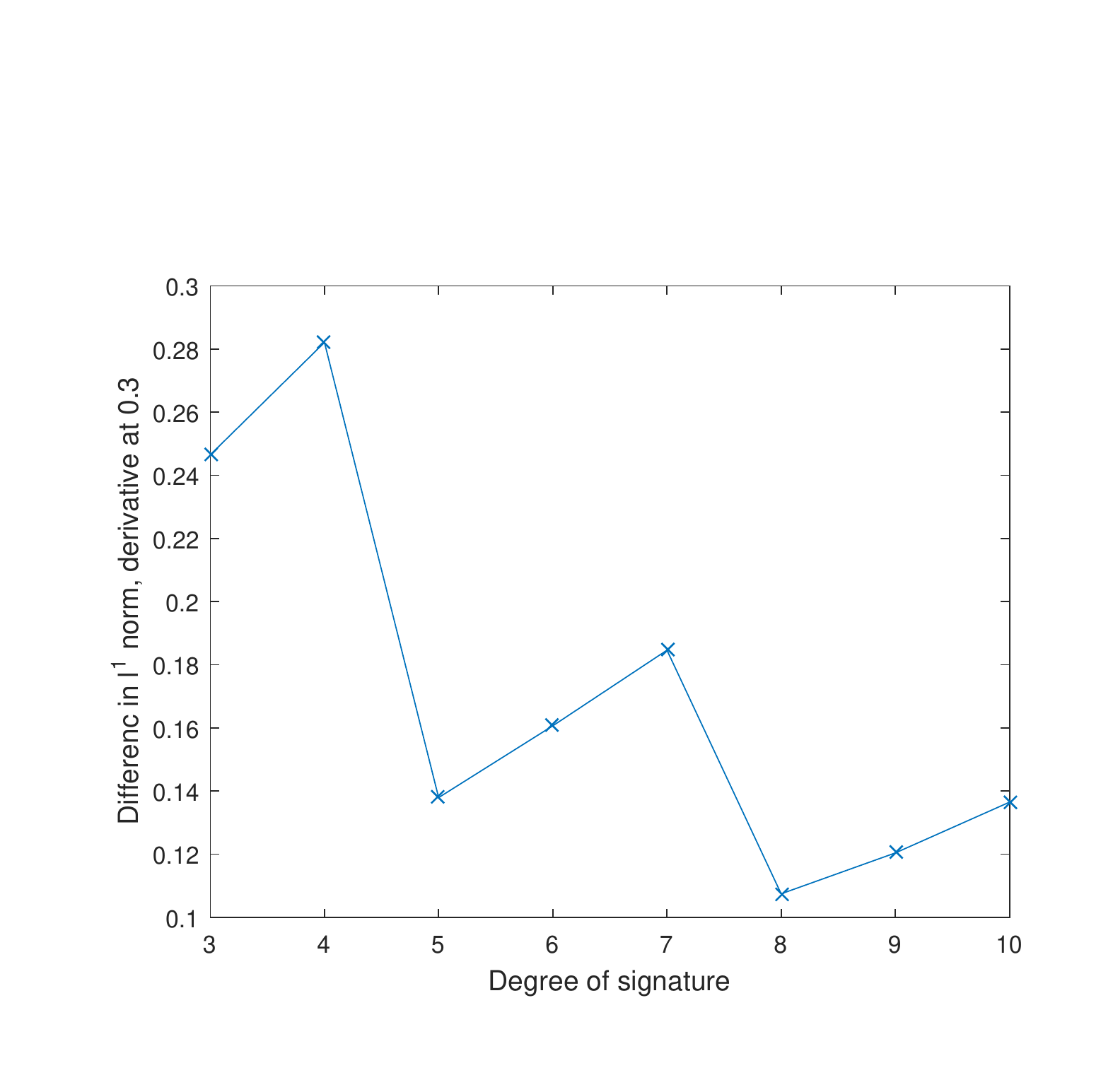}
\caption{$\left\lVert I_{p,n}(f(\theta))-\bar{S}_{n+1}\right\rVert_1$ for $p=\left\lfloor 0.3(n+2)\right\rfloor$, $n=3,\cdots,10$.}
\label{nextleveldiffl1}
\end{figure}
\begin{figure}[h!]
\centering
\includegraphics[trim={2cm 8cm 3cm 6cm},clip,width=0.8\textwidth]{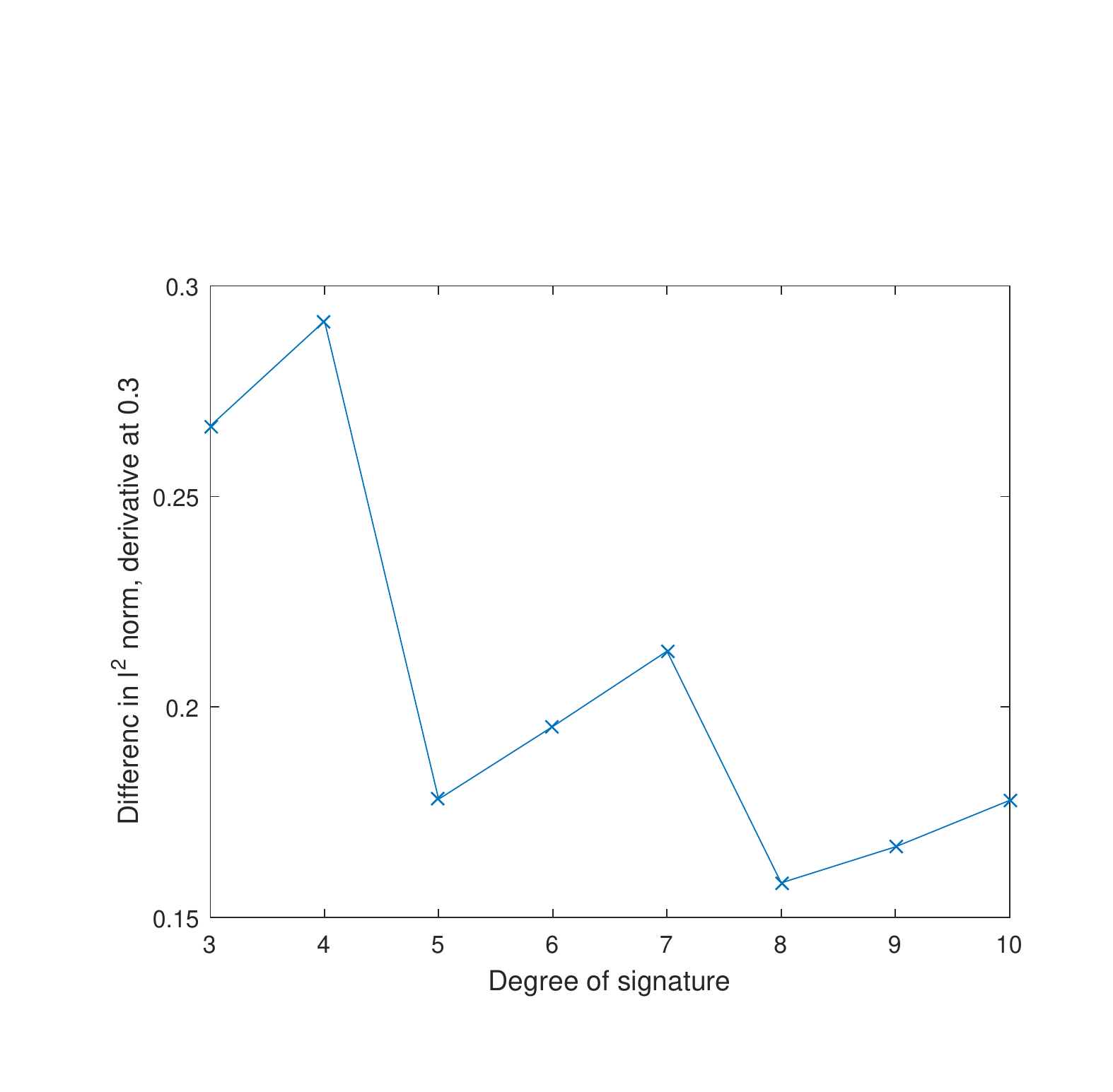}
\caption{$\left\rVert I_{p,n}(f(\theta))-\bar{S}_{n+1}\right\rVert_2$ for $p=\left\lfloor 0.3(n+2)\right\rfloor$, $n=3,\cdots,10$.}
\label{nextleveldiffl2}
\end{figure}
\begin{figure}[h!]
\centering
\includegraphics[trim={2cm 8cm 3cm 6cm},clip,width=0.8\textwidth]{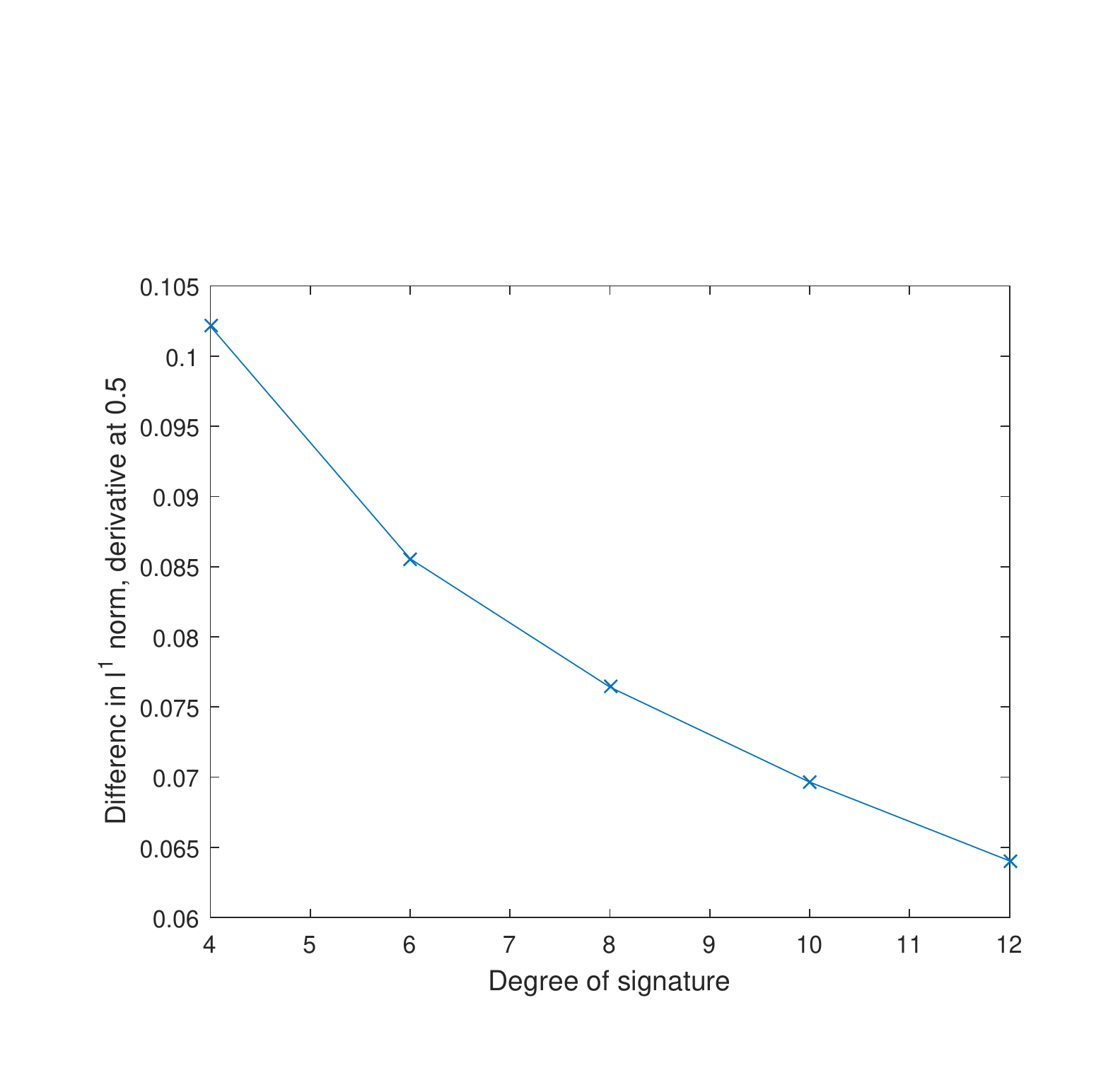}
\caption{$\left\rVert I_{p,n}(f(\theta))-\bar{S}_{n+1}\right\rVert_1$ for $p=\left\lfloor 0.5(n+2)\right\rfloor$, $n=4,6,8,10,12$.}
\label{nextleveldiffl10.5}
\end{figure}
\section{A lower bound for the signature of a path}\label{sectionlowerbound}
We have so far discussed finding an upper bound on $\left\lVert I_{p,n}(f(\theta))-\bar{S}_{n+1}\right\rVert$ for a path which is differentiable at $\theta$. In the light of Lemma \ref{normrelation}, we know that for $x,y\in\mathbb{R}^d$, 
\begin{align*}
\left\lVert\bar{S}_n\right\rVert\left\lVert x-y\right\rVert=\left\lVert I_{p,n}(x)-I_{p,n}(y)\right\rVert\leq\left\lVert I_{p,n}(x)-\bar{S}_{n+1}\right\rVert+\left\lVert I_{p,n}(y)-\bar{S}_{n+1}\right\rVert.
\end{align*}
Given an upper bound on the right-hand side of the inequality, if we can obtain a lower bound on $\left\lVert\bar{S}_n\right\rVert$, we can get an upper bound on $\lVert x-y\rVert$. In fact, finding a lower bound for the signature is itself an interesting topic. We will see in the following example that the rate of decay of the signature depends on the path as well as the norm we choose. 
\begin{eg}\label{strongdecaycounter}
If we consider a monotone lattice path $\gamma$ of consisting of two pieces, and each piece is of length $\frac{1}{2}$, the $\ell^1$ norm of the signature at level $n$ is
\begin{align*}
\left\lVert n!S^n_{0,T}(\gamma)\right\rVert_{1}=\sum_{k=0}^n\binom{n}{k}\left(\frac{1}{2}\right)^k\left(\frac{1}{2}\right)^{n-k}=1,
\end{align*}
hence the signature is clearly bounded below. If we consider the norm of the signature under the Hilbert-Schmidt norm, then
\begin{align*}
\left\lVert n!S^n_{0,T}(\gamma)\right\rVert_{\text{HS}}=\sqrt{\sum_{k=0}^n\binom{n}{k}^2\left(\frac{1}{2}\right)^{2k}\left(\frac{1}{2}\right)^{2(n-k)}}.
\end{align*}
As we can see from Figure \ref{lowerboundeg}, the $\left\lVert S^n_{0,T}(\gamma)\right\rVert_{\text{HS}}$ decreases in such a way that there is no obvious constant non-zero lower bound for the signature.
\end{eg}
\begin{figure}
\centering
\includegraphics[width=\textwidth]{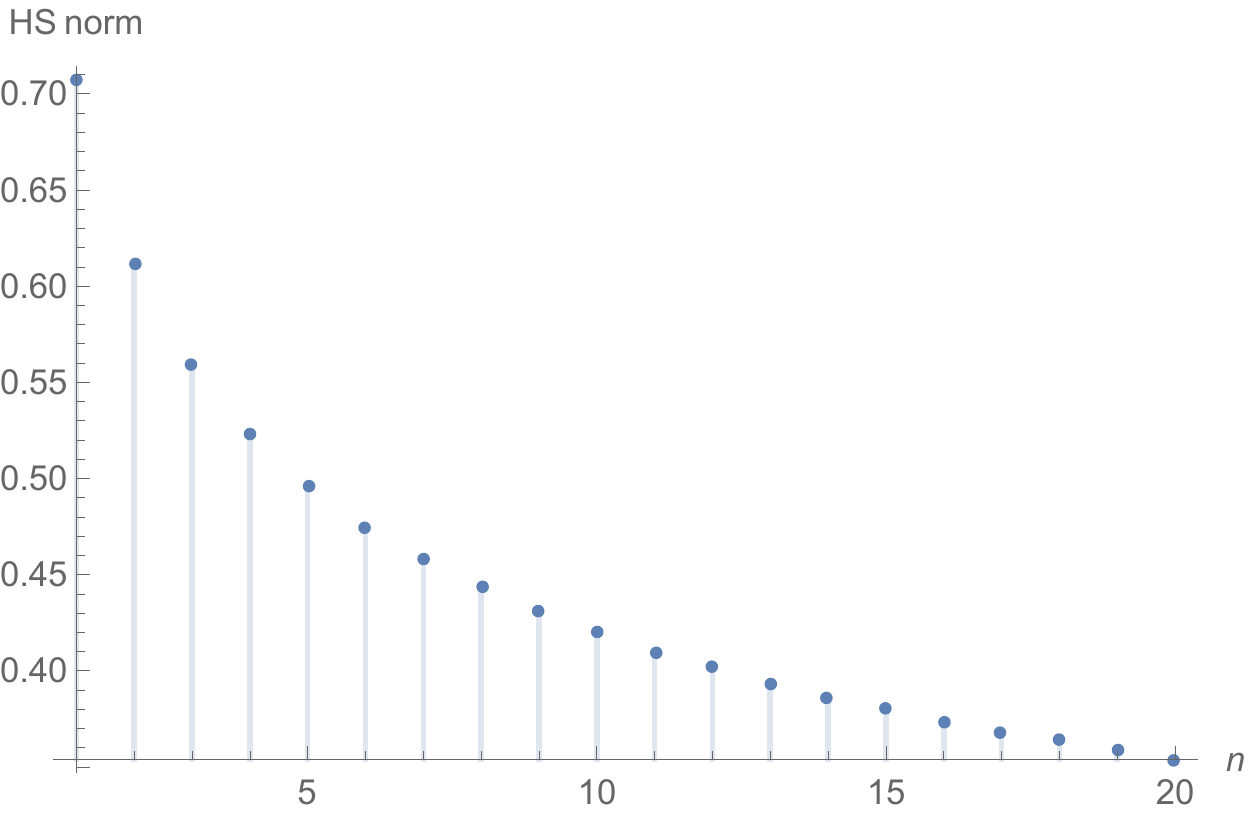}
\caption{The Hilbert-Schmidt norm of the normalised signature of a monotone lattice path at level $n$}
\label{lowerboundeg}
\end{figure}
Therefore it is important to take into account the effects of the norm when we look for a lower bound for the signature.\newline
We first recall the norm in defined by Hambly and Lyons \cite{hambly2010uniqueness}.
\begin{defn}
If $V$ is a Banach space, $A$ is a Banach algebra, and $F_1,\cdots,F_k\in \text{Hom}(V,A)$, then the canonical linear extension $F_1\otimes\cdots\otimes F_k$ from $V^{\otimes k}$ to $A$ is defined as
\begin{align*}
(v_1,\cdots,v_k)\to F_1(v_1)\cdots F_k(v_k).
\end{align*} 
Define the norm 
\begin{align*}
\lVert x\rVert_{\to A}:=\sup_{F_i\in \text{Hom}(V,A),\lVert F_i\rVert_{\text{Hom}(V,A)}=1}\left\lVert F_1\otimes\cdots\otimes F_k(x)\right\rVert_A.
\end{align*}
\end{defn}
As stated by Hambly and Lyons \cite{hambly2010uniqueness}, the norm $\lVert\cdot\rVert_{\to A}$ is smaller than the projective tensor norm. We give a proof of this claim in the following lemma.
\begin{lem}\label{arrowandprojnorm}
For all $x\in V^{\otimes k}$, $\lVert x\rVert_{\to A}\leq \lVert x\rVert_{\pi}$, where $\lVert\cdot\rVert_{\pi}$ is the projective tensor norm.
\end{lem}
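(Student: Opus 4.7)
The plan is to unwind both norms in turn and chain a triangle inequality against submultiplicativity in $A$. Recall that the projective tensor norm on $V^{\otimes k}$ is
\begin{align*}
\lVert x\rVert_{\pi}=\inf\left\{\sum_{i}\lVert v_1^{(i)}\rVert\cdots\lVert v_k^{(i)}\rVert\,:\, x=\sum_{i}v_1^{(i)}\otimes\cdots\otimes v_k^{(i)}\right\},
\end{align*}
where the infimum runs over all finite representations of $x$ as a sum of elementary tensors. The strategy is to show that any such representation gives an upper bound for $\lVert x\rVert_{\to A}$, and then pass to the infimum.

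First I would fix an arbitrary representation $x=\sum_{i} v_1^{(i)}\otimes\cdots\otimes v_k^{(i)}$ and arbitrary $F_1,\ldots,F_k\in\mathrm{Hom}(V,A)$ with $\lVert F_j\rVert_{\mathrm{Hom}(V,A)}=1$. Expanding by linearity of the canonical extension gives
\begin{align*}
F_1\otimes\cdots\otimes F_k(x)=\sum_i F_1(v_1^{(i)})\cdots F_k(v_k^{(i)}).
\end{align*}
Applying the triangle inequality in $A$, the submultiplicativity of the Banach algebra norm, and the bound $\lVert F_j(v)\rVert_{A}\leq\lVert v\rVert$ (which follows from $\lVert F_j\rVert=1$), I obtain
\begin{align*}
\lVert F_1\otimes\cdots\otimes F_k(x)\rVert_{A}\leq \sum_i\lVert v_1^{(i)}\rVert\cdots\lVert v_k^{(i)}\rVert.
\end{align*}

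Since the right-hand side is independent of the $F_j$, taking the supremum over all $(F_1,\ldots,F_k)$ subject to $\lVert F_j\rVert=1$ yields
\begin{align*}
\lVert x\rVert_{\to A}\leq \sum_i\lVert v_1^{(i)}\rVert\cdots\lVert v_k^{(i)}\rVert.
\end{align*}
Finally, taking the infimum over all representations of $x$ gives $\lVert x\rVert_{\to A}\leq\lVert x\rVert_{\pi}$, as required. There is no real obstacle here beyond recalling the correct definition of $\lVert\cdot\rVert_{\pi}$ and making sure that the two applications of uniformity (over $F_j$ and then over representations of $x$) are performed in the right order so the inequalities chain cleanly.
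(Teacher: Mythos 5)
Your proposal is correct and follows essentially the same argument as the paper: expand an arbitrary representation of $x$, bound $\lVert F_1\otimes\cdots\otimes F_k(x)\rVert_A$ by the triangle inequality, submultiplicativity in $A$, and $\lVert F_j(v)\rVert_A\leq\lVert v\rVert$, then pass to the infimum over representations and the supremum over the $F_j$. The only difference is the (immaterial) order in which you take the supremum over the $F_j$ and the infimum over representations, which both chain correctly since the intermediate bound is uniform.
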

\begin{proof}
For all $x\in V^{\otimes k}$, if $x=\sum_{i\in I}v_{i_1}\otimes\cdots\otimes v_{i_k}$ is an representation of $x$ for some indexing set $I$, then for any $F_i\in \text{Hom}(V,A)$ such that $\lVert F_i\rVert_{\text{Hom}(V,A)}=1$, $i=1,\cdots,k$, we have
\begin{align*}
\left\lVert(F_1\otimes\cdots\otimes F_k)(x)\right\rVert_A&=\left\lVert\sum_{i\in I}F_1(v_1)\cdots F_k(v_k)\right\rVert_A\\
&\leq\sum_{i\in I}\left\lVert F(v_{i_1})\rVert_{A}\cdots\right\lVert F(v_{i_k})\rVert_A\\
&\leq\sum_{i\in I}\left\lVert v_{i_1}\right\rVert\cdots\left\lVert v_{i_k}\right\rVert,
\end{align*}
for an abitrary representation of $x$. Then by the definition of the projective tensor norm, for any $F_i\in \text{Hom}(V,A)$ such that $\lVert F_i\rVert_{\text{Hom}(V,A)}=1$, $i=1,\cdots,k$,
\begin{align*}
\left\lVert(F_1\otimes\cdots\otimes F_k)(x)\right\rVert_A\leq\lVert x\rVert_{\pi},
\end{align*}
hence
\begin{align*}
\lVert x\rVert_{\to A}\leq\lVert x\rVert_{\pi}.
\end{align*}
\end{proof}
We also introduce a mapping function defined by Hambly and Lyons \cite{hambly2010uniqueness}.
\begin{defn}\label{hypermatrix}
Define the mapping $F:\mathbb{R}^{d}\to \text{Hom}\left(\mathbb{R}^{d+1},\mathbb{R}^{d+1}\right)$ such that for $x=(x_1,\cdots,x_d)\in\mathbb{R}^d$,
\begin{align*}
F:x\mapsto\begin{pmatrix}
0 & \cdots & 0 & x_1\\
\vdots &\ddots &\vdots &\vdots\\
0 &\cdots & 0 & x_d\\
x_1 &\cdots & x_d & 0
\end{pmatrix},
\end{align*}
where $\text{Hom}(V,A)$ denotes the set of bounded linear operators from $V$ to $A$.
\end{defn}
We also note the following useful lemma which gives a bound on the tail behaviour of the Poisson distribution by Canonne \cite{canonne2017}.
\begin{lem}[Canonne \cite{canonne2017}]\label{poissontail}
Let $X\sim$ Poisson$(\lambda)$ for some parameter $\lambda>0$. Then for any $h>0$, we have
\begin{align*}
\mathbb{P}\left(|X-\lambda|\geq h\right)\leq2\exp\left(-\frac{h^2}{2(\lambda+h)}\right).
\end{align*} 
\end{lem}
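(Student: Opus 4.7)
The plan is to derive both tail estimates via the exponential Chernoff method applied to the moment generating function $\mathbb{E}[e^{tX}] = \exp(\lambda(e^t - 1))$, and then combine them by a union bound to obtain the factor of $2$ in the stated inequality.

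For the upper tail, Markov's inequality applied to $e^{tX}$ with $t>0$ yields
\begin{align*}
\mathbb{P}(X - \lambda \geq h) \leq \exp\bigl(\lambda(e^t - 1) - t(\lambda + h)\bigr),
\end{align*}
and the right-hand side is minimised at $t^{\ast} = \ln(1 + h/\lambda)$, giving $\mathbb{P}(X - \lambda \geq h) \leq \exp(-\lambda \psi(h/\lambda))$ where $\psi(u) := (1+u)\ln(1+u) - u$. The key analytic step is the pointwise inequality $\psi(u) \geq u^2/(2(1+u))$ for $u \geq 0$. I would verify this by setting $g(u) := \psi(u) - u^2/(2(1+u))$, observing that $g(0) = g'(0) = 0$, and checking that $g''(u) \geq 0$; after using the identity $u^2/(1+u) = u - 1 + 1/(1+u)$ one obtains $g''(u) = ((1+u)^2 - 1)/(1+u)^3 \geq 0$. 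Substituting $u = h/\lambda$ gives $\exp(-h^2/(2(\lambda+h)))$ as a bound on the upper tail.

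For the lower tail, if $0 < h < \lambda$ then Markov applied to $e^{-tX}$ with $t>0$ gives
\begin{align*}
\mathbb{P}(X - \lambda \leq -h) \leq \exp\bigl(\lambda(e^{-t} - 1) + t(\lambda - h)\bigr),
\end{align*}
minimised at $t^{\ast} = -\ln(1 - h/\lambda)$ to yield $\exp(-\lambda \phi(h/\lambda))$ with $\phi(u) := u + (1-u)\ln(1-u)$. The Taylor expansion $\phi(u) = u^2/2 + u^3/6 + \cdots$ immediately gives $\phi(u) \geq u^2/2 \geq u^2/(2(1+u))$ for $u \in [0,1)$, which is at least as strong as the upper-tail bound. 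The edge case $h \geq \lambda$ is trivial: for $h > \lambda$ we have $\mathbb{P}(X \leq \lambda - h) = 0$ since $X \geq 0$, and for $h = \lambda$ we have $\mathbb{P}(X \leq 0) = e^{-\lambda} \leq e^{-\lambda/4} = \exp(-h^2/(2(\lambda+h)))$. Adding the two tail estimates produces the claimed $2\exp(-h^2/(2(\lambda+h)))$.

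The main obstacle is the calculus inequality $\psi(u) \geq u^2/(2(1+u))$ that drives the upper tail: the lower-tail estimate is essentially free from a Taylor expansion, and the Chernoff optimisations themselves are routine once the Poisson MGF is in hand. The chosen form of the denominator $(\lambda+h)$ rather than $\lambda$ is precisely what makes the upper-tail comparison tight enough to survive the case when $h$ is comparable to or larger than $\lambda$.
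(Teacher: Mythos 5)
Your proof is correct: the Chernoff bound with the Poisson moment generating function, the optimisations at $t^{\ast}=\ln(1+h/\lambda)$ and $t^{\ast}=-\ln(1-h/\lambda)$, the inequality $(1+u)\ln(1+u)-u\geq u^2/(2(1+u))$, the treatment of the edge cases $h\geq\lambda$, and the final union bound are all sound. The paper itself gives no proof of this lemma---it is quoted directly from Canonne---and your argument is essentially the standard one found in that cited note, so there is nothing to reconcile.
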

We extend the argument by Hambly and Lyons (Theorem 13, \cite{hambly2010uniqueness}) and prove in the following theorem that a non-zero lower bound exists for more than one level of the signature of a piecewise linear path.
\begin{thm}\label{lowerbound}
Let $\gamma:[0,1]\rightarrow\mathbb{R}^d$ be a non-degenerate piecewise linear path consisting of $M>0$ linear pieces. Suppose $2\Omega>0$ is the smallest angle between two adjacent edges. Equip $\mathbb{R}^d$ and $\mathbb{R}^{d+1}$ with the Euclidean norm. Then for any $c\in(0,1)$, there exists at least an increasing subsequence $(n_k)_{k\geq 1}\in\mathbb{N}$ such that
\begin{align*}
\left\lVert\bar{S}_{n_k}\right\rVert_{\to\text{Hom}(\mathbb{R}^{d+1},\mathbb{R}^{d+1})}\geq c\exp\left(-(M-1)K(\Omega)\right)\quad\forall\, k\geq 1,
\end{align*}
where $K(\Omega):=\log\left(\frac{2}{1-\cos|\Omega|}\right)$.
\end{thm}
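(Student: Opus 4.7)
The plan is to extend the argument of Hambly and Lyons (Theorem 13, \cite{hambly2010uniqueness}) by testing $\bar S_n$ against the specific homomorphism $F:\mathbb{R}^{d}\to\mathrm{Hom}(\mathbb{R}^{d+1},\mathbb{R}^{d+1})$ of Definition \ref{hypermatrix}. Since $\|F(v)\|_{\mathrm{op}}=1$ whenever $\|v\|=1$, the definition of the norm $\|\cdot\|_{\to\mathrm{Hom}(\mathbb{R}^{d+1},\mathbb{R}^{d+1})}$ yields
\begin{equation*}
\|\bar S_n\|_{\to\mathrm{Hom}(\mathbb{R}^{d+1},\mathbb{R}^{d+1})}\;\geq\;\bigl\|F^{\otimes n}(\bar S_n)\bigr\|_{\mathrm{op}}\;\geq\;\bigl|\langle e,F^{\otimes n}(\bar S_n)\,e\rangle\bigr|,
\end{equation*}
where $e=(0,\dots,0,1)\in\mathbb{R}^{d+1}$. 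It therefore suffices to produce a lower bound of the form $c\exp(-(M-1)K(\Omega))$ on the scalar $\langle e,F^{\otimes n}(\bar S_n)e\rangle$ for $n$ in an increasing subsequence.

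I would first expand $\bar S_n$ through Chen's identity. Writing $v_1,\dots,v_M$ for the unit edge directions and $t_1,\dots,t_M$ for the corresponding edge lengths (so $\sum_i t_i=1$),
\begin{equation*}
\bar S_n=\sum_{k_1+\cdots+k_M=n}\binom{n}{k_1,\dots,k_M}\prod_{i=1}^{M}t_i^{k_i}\,v_1^{\otimes k_1}\otimes\cdots\otimes v_M^{\otimes k_M}.
\end{equation*}
Pairing with $e$ on both sides converts $\langle e,F^{\otimes n}(\bar S_n)e\rangle$ into a multinomial-weighted sum of scalars $\langle e,F(v_1)^{k_1}\cdots F(v_M)^{k_M}e\rangle$. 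Exploiting the identities $F(v)^{2}(w,t)=((v\cdot w)v,t)$ and $F(v)(0,1)=(v,0)$, the state produced by the matrix product can be tracked from right to left: it oscillates between the $\mathbb{R}^{d}$-block and the scalar block according to the parities $(k_i\bmod 2)$, and each time the running vector is re-projected onto a new direction $v_i$ a factor $\langle v_i,v_{i+1}\rangle$ is harvested. Summing over the relevant parity patterns and using the turn constraint $\theta_i=\angle(v_i,v_{i+1})\geq 2\Omega$, each of the $M-1$ turns contributes a factor bounded below by $(1-\cos\Omega)/2=e^{-K(\Omega)}$, giving a per-configuration deterministic lower bound $e^{-(M-1)K(\Omega)}$.

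Given this per-configuration bound, the remaining step is to show that the weighted multinomial mass on the favourable parity configurations can be made arbitrarily close to the full mass. Here I would Poissonise: the multinomial with parameter $(n;t_1,\dots,t_M)$ is the conditional law of independent $X_i\sim\mathrm{Poisson}(nt_i)$ given $\sum_i X_i=n$, and Lemma \ref{poissontail} supplies $\mathbb{P}(|X_i-nt_i|\geq h)\leq 2\exp(-h^{2}/(2(nt_i+h)))$. A union bound over $i=1,\dots,M$ confines $(k_1,\dots,k_M)$ to an $O(\sqrt{n\log n})$ window around $(nt_1,\dots,nt_M)$ with probability tending to $1$. To hit the prescribed parity pattern, I would extract a subsequence $(n_k)$ along which the nearest integer to $n_k t_i$ has the required parity for each $i$ — this is available by Weyl equidistribution if the $t_i$ are irrational and by periodicity in the rational case — and along this subsequence the favourable mass exceeds any prescribed $c<1$ for $k$ sufficiently large.

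The main obstacle I foresee is the algebraic step in the second paragraph: one must identify precisely which parity patterns of $(k_1,\dots,k_M)$ yield non-vanishing, sign-coherent contributions to $\langle e,F(v_1)^{k_1}\cdots F(v_M)^{k_M}e\rangle$, and verify that the cancellations between adjacent patterns produce the per-turn lower bound $(1-\cos\Omega)/2$ rather than the naive and unusable $|\cos\theta_i|$. Once this identity is established, the multinomial concentration supplied by Lemma \ref{poissontail}, together with the subsequence device, closes the argument.
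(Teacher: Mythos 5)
Your route is genuinely different from the paper's. The paper never analyses the matrix products $F(v_1)^{k_1}\cdots F(v_M)^{k_M}$: it rescales the path to $\alpha\gamma$, quotes Lemma 3.7 and Proposition 3.13 of \cite{hambly2010uniqueness} to get $\lVert\Gamma_\alpha\rVert_{\mathrm{Hom}(\mathbb{R}^{d+1},\mathbb{R}^{d+1})}\geq\exp\bigl(\alpha-(M-1)K(\Omega)\bigr)$ for the Cartan development, expands $\Gamma_\alpha$ as the exponential series in the signature so that, after multiplying by $e^{-\alpha}$, the inequality reads $\mathbb{E}\bigl[\lVert\bar S_N\rVert\bigr]\geq e^{-(M-1)K(\Omega)}$ for $N\sim\mathrm{Poisson}(\alpha)$ in the norm in question, and then combines $\lVert\bar S_n\rVert\leq 1$ with a pigeonhole argument and the Poisson tail bound of Lemma \ref{poissontail} to locate, for each large $\alpha$, a level $n\in(\alpha-\alpha^{3/4},\alpha+\alpha^{3/4})$ at which the claimed bound holds; letting $\alpha\to\infty$ yields the increasing subsequence. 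Your Poissonisation instead acts at a fixed level, on the multinomial split of the $n$ letters among the $M$ edges; if completed, your computation would in fact give more than the theorem asks (a bound along all sufficiently large even levels, with an explicit constant).

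However, the decisive step of your argument is only asserted, and as stated it is false configuration by configuration: for fixed $(k_1,\dots,k_M)$ the scalar $\langle e,F(v_1)^{k_1}\cdots F(v_M)^{k_M}e\rangle$ is either zero (whenever the number of odd $k_i$ is odd, hence identically zero for every odd $n$) or a product of adjacent cosines $\langle v_i,v_{i+1}\rangle$ over a parity-dependent set of turns, and these cosines may be negative; no single configuration carries a per-turn factor $(1-\cos\Omega)/2$. The usable statement emerges only after averaging over parity patterns: a two-state (scalar block versus $\mathbb{R}^d$ block of $\mathbb{R}^{d+1}$) transfer computation shows that if the parities of $(k_1,\dots,k_M)$ were exactly uniform on the admissible coset, the average would factor as $\prod_{i=1}^{M-1}\tfrac12\bigl(1+\langle v_i,v_{i+1}\rangle\bigr)$, each factor positive precisely because the path never backtracks, and it is this product that must be compared with $e^{-(M-1)K(\Omega)}$. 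To turn your sketch into a proof you must (i) establish that factorisation, (ii) control the deviation of the multinomial parity distribution from uniformity (of order $\max_S|1-2\sum_{i\in S}t_i|^{\,n}$, exponentially small, but it multiplies signed terms and cannot be waved away), and (iii) handle configurations with some $k_i$ small, where the harvested cosines are no longer between adjacent edges. Your subsequence device is also misdirected: parities of the nearest integers to $n t_i$ and Weyl equidistribution play no role, since all parity patterns contribute simultaneously; the only obstruction is the exact vanishing at odd levels with the test vector $e$, so the natural subsequence is simply the even levels. As it stands the proposal has a genuine gap at its central identity, though the route is salvageable; the paper's proof shows how to bypass this combinatorics entirely at the price of invoking the hyperbolic development estimates of Hambly and Lyons.
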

\begin{proof}
Without loss of generality, we can assume $\gamma$ is of length $1$. Suppose $D>0$ is the length of the shortest edge of $\gamma$. For $\alpha>0$, write the path $\alpha\gamma$ as $\gamma_\alpha$. Then for all $\alpha$ such that $\alpha\geq\frac{K(\Omega)}{D}$, the shortest path of $\gamma_\alpha$ is at least of length $K(\Omega)$. Then by Lemma 3.7 of \cite{hambly2010uniqueness}, the Cartan development $\Gamma_\alpha$ of $\gamma_\alpha$ satisfies
\begin{align*}
d(o, \Gamma_{\alpha}o)\geq\alpha-(M-1)K(\Omega).
\end{align*}
Also by Proposition 3.13 of \cite{hambly2010uniqueness}, we know that 
\begin{align*}
\lVert\Gamma_\alpha\rVert_{\text{Hom}(\mathbb{R}^{d+1},\mathbb{R}^{d+1})}\geq\exp(d(o,\Gamma_{\alpha}o)).
\end{align*}
Then if we recall the definition of the map $F$ as in Definition \ref{hypermatrix}, for all $\alpha$ such that $\alpha>\frac{K(\Omega)}{D}$,  we have
\begin{align}\label{poissonineqderive}
&\exp\left(\alpha-(M-1)K(\Omega)\right)\\
\leq&\left\lVert\Gamma_\alpha\right\rVert_{\text{Hom}(\mathbb{R}^{d+1},\mathbb{R}^{d+1})}\nonumber\\
\leq&\sum_{n=0}^\infty\alpha^n\left\lVert\int_{0<u_1<\cdots<u_n<1}F(\mathrm{d}\gamma_{u_1})\cdots F(\mathrm{d}\gamma_{u_n})\right\rVert_{\text{Hom}(\mathbb{R}^{d+1},\mathbb{R}^{d+1})}\nonumber\\
=&\sum_{n=0}^\infty\alpha^n\left\lVert (F\otimes\cdots\otimes F)\left(\int_{0<u_1<\cdots<u_n<1}\mathrm{d}\gamma_{u_1}\otimes\cdots\otimes\mathrm{d}\gamma_{u_n}\right)\right\rVert_{\text{Hom}(\mathbb{R}^{d+1},\mathbb{R}^{d+1})}\nonumber\\
\leq&\sum_{n=0}^\infty\alpha^n\left\lVert\int_{0<u_1<\cdots<u_n<1}\mathrm{d}\gamma_{u_1}\otimes\cdots\otimes\mathrm{d}\gamma_{u_n}\right\rVert_{\to\text{Hom}(\mathbb{R}^{d+1},\mathbb{R}^{d+1})}\,\nonumber\\
\leq&\sum_{n=0}^\infty\frac{\alpha^n}{n!}\left\lVert\bar{S}_n\right\rVert_{\to\text{Hom}(\mathbb{R}^{d+1},\mathbb{R}^{d+1})},
\end{align}
where the third inequality follows from  the definition of the norm $\lVert\cdot\rVert_{\to \text{Hom}(\mathbb{R}^{d+1},\mathbb{R}^{d+1})}$. Multiplying both sides of (\ref{poissonineqderive}) by $\exp(-\alpha)$ gives
\begin{equation}\label{poisson}
\exp\left(-(M-1)K(\Omega)\right)\leq\exp(-\alpha)\sum_{n=0}^{\infty}\frac{\alpha^n}{n!}\left\lVert\bar{S}_n\right\rVert_{\to\text{Hom}(\mathbb{R}^{d+1},\mathbb{R}^{d+1})}.
\end{equation}
Note the right hand side of (\ref{poisson}) is the expectation of the function $\left\lVert\bar{S}_n\right\rVert_{\to\text{Hom}(\mathbb{R}^{d+1},\mathbb{R}^{d+1})}$ under the Poisson distribution with parameter $\alpha$. Note the distribution has mean $\alpha$, variance $\alpha$. We have the following claim: \newline
For all $c\in(0,1)$, 
\begin{align*}
&\mathbb{P}\left(n\,\text{such that}\,\left\lVert\bar{S}_n
\right\rVert_{\to\text{Hom}(\mathbb{R}^{d+1},\mathbb{R}^{d+1})}\geq c\exp\left(-(M-1)K(\Omega)\right)\right)\\
\geq&(1-c)\exp\left(-(M-1)K(\Omega)\right).
\end{align*}
We prove the above claim by contradiction. Suppose that
\begin{align*}
&\mathbb{P}\left(n\,\text{such that} \left\lVert\bar{S}_n\right\rVert_{\to\text{Hom}(\mathbb{R}^{d+1},\mathbb{R}^{d+1})}\geq c\exp\left(-(M-1)K(\Omega)\right)\right)\\
<&(1-c)\exp\left(-(M-1)K(\Omega)\right).
\end{align*}
We know from \cite{lyons2007differential} that $\left\lVert\bar{S}_n\right\rVert_{\to \text{Hom}(\mathbb{R}^{d+1},\mathbb{R}^{d+1})}\leq 1$. Then if we think about how large the expectation can be, we have
\begin{align*}
&\exp(-\alpha)\sum_{n=0}^{\infty}\frac{\alpha^n}{n!}\left\lVert\bar{S}_n\right\rVert_{\to\text{Hom}(\mathbb{R}^{d+1},\mathbb{R}^{d+1})}\\
<&(1-c)\exp\left(-(M-1)K(\Omega)\right)+c\exp(-(M-1)K(\Omega))\\
=&\exp(-(M-1)K(\Omega))
\end{align*}
which contradicts (\ref{poisson}). So we must have
\begin{align*}
&\mathbb{P}\left(n\,\text{such that}\,\left\lVert\bar{S}_n\right\rVert_{\to\text{Hom}(\mathbb{R}^{d+1},\mathbb{R}^{d+1})}\geq c\exp\left(-(M-1)K(\Omega)\right)\right)\\
\geq&(1-c)\exp\left(-(M-1)K(\Omega)\right).
\end{align*}
Then by Lemma \ref{poissontail}, we have an estimate for $X\sim$Poisson$(\alpha)$ such that for all $h>0$, 
\begin{align*}
\mathbb{P}\left(|X-\alpha|\geq h\right)\leq2\exp\left(-\frac{h^2}{2(\alpha+h)}\right).
\end{align*}
In particular, 
\begin{equation}\label{poissontailspecial}
\mathbb{P}\left(|X-\alpha|\geq\alpha^{3/4}\right)\leq 2\exp\left(-\frac{\alpha^{3/2}}{2(\alpha+\alpha^{3/4})}\right).
\end{equation}
Note the right-hand side of (\ref{poissontailspecial}) is a decreasing function in  $\alpha$, hence there exists $\alpha^*$ such that for all $\alpha>\alpha^*$, 
\begin{align*}
\mathbb{P}(|X-\alpha|\geq\alpha^{3/4})&\leq 2\exp\left(-\frac{\alpha^{3/2}}{2(\alpha+\alpha^{3/4})}\right)\\
&<(1-c)\exp\left(-(M-1)K(\Omega)\right)\\
&\leq\mathbb{P}\left(n\, \text{such that}\, \left\lVert\bar{S}_n\right\rVert_{\to\text{Hom}(\mathbb{R}^{d+1},\mathbb{R}^{d+1})}\geq c\exp(-(M-1)K(\Omega))\right).
\end{align*}
Then there must be some $n$ near the mean $\alpha$ such that
\begin{align*}
\left\lVert\bar{S}_n\right\rVert_{\to\text{Hom}(\mathbb{R}^{d+1},\mathbb{R}^{d+1})}\geq c\exp(-(M-1)K(\Omega)),
\end{align*}
i.e. for $\alpha>\alpha^*$,
\begin{align*}
&\mathbb{P}\left(X=n\,\text{where}\,|n-\alpha|<\alpha^{3/4}\,\text{and}\,\lVert\bar{S}_n\rVert_{\to\text{Hom}(\mathbb{R}^{d+1},\mathbb{R}^{d+1})}\geq c\exp(-(M-1)K(\Omega))\right)\\
\geq&(1-c)\exp\left(-(M-1)K(\Omega)\right)-2\exp\left(-\frac{\alpha^{3/2}}{2(\alpha+\alpha^{3/4})}\right)\\
>&0.
\end{align*}
Hence for large enough $\alpha$, there exists at least one $n\in(\alpha-\alpha^{3/4},\alpha+\alpha^{3/4})$ such that $\left\lVert\bar{S}_n\right\rVert_{\to\text{Hom}(\mathbb{R}^{d+1},\mathbb{R}^{d+1})}\geq c\exp(-(M-1)K(\Omega))$. Note that $\alpha$ grows faster than $\alpha^{3/4}$, so as $\alpha$ increases, the interval $(\alpha-\alpha^{3/4},\alpha+\alpha^{3/4})$ moves rightwards. Hence there exists a strictly increasing subsequence $(n_k)_{k\geq 1}\in\mathbb{N}$ such that 
\begin{align*}
\left\lVert\bar{S}_{n_k}\right\rVert_{\to\text{Hom}(\mathbb{R}^{d+1},\mathbb{R}^{d+1})}\geq c\exp(-(M-1)K(\Omega))\quad\forall\, k\geq 1.
\end{align*}
\end{proof}
\begin{cor}\label{pinormlowerbound}
Let $\gamma:[0,1]\rightarrow\mathbb{R}^d$ be a non-degenerate piecewise linear path consisting of $M>0$ linear pieces. Suppose $2\Omega>0$ is the smallest angle between two adjacent edges. Equip $\mathbb{R}^d$ and $\mathbb{R}^{d+1}$ with the Euclidean norm. Then for any $c\in(0,1)$, there exists at least a subsequence $(n_k)_{k\geq 1}\in\mathbb{N}$ such that
\begin{align*}
\left\lVert\bar{S}_{n_k}\right\rVert_{\pi}\geq c\exp(-(M-1)K(\Omega))\quad\forall k\geq 1,
\end{align*}
where $K(\Omega):=\log\left(\frac{2}{1-\cos|\Omega|}\right)$ and $\lVert\cdot\rVert_{\pi}$ is the projective tensor norm induced from the Euclidean space $\mathbb{R}^d$.
\end{cor}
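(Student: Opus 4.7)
The corollary is an almost immediate consequence of Theorem \ref{lowerbound} combined with Lemma \ref{arrowandprojnorm}, so the plan is very short. First I would invoke Theorem \ref{lowerbound}: under exactly the same hypotheses (non-degenerate piecewise linear path with $M$ pieces, smallest angle $2\Omega$, Euclidean norm on $\mathbb{R}^d$ and $\mathbb{R}^{d+1}$), this produces for every $c \in (0,1)$ a strictly increasing subsequence $(n_k)_{k \geq 1} \subset \mathbb{N}$ such that
\begin{align*}
\left\lVert \bar{S}_{n_k} \right\rVert_{\to \text{Hom}(\mathbb{R}^{d+1},\mathbb{R}^{d+1})} \geq c \exp\bigl(-(M-1)K(\Omega)\bigr).
\end{align*}

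Next I would simply apply Lemma \ref{arrowandprojnorm} pointwise along this subsequence. That lemma says that for any tensor $x \in V^{\otimes k}$, one has $\lVert x \rVert_{\to A} \leq \lVert x \rVert_\pi$. Taking $V = \mathbb{R}^d$, $A = \text{Hom}(\mathbb{R}^{d+1},\mathbb{R}^{d+1})$, and $x = \bar{S}_{n_k}$ chains the two inequalities:
\begin{align*}
\left\lVert \bar{S}_{n_k} \right\rVert_\pi \;\geq\; \left\lVert \bar{S}_{n_k} \right\rVert_{\to \text{Hom}(\mathbb{R}^{d+1},\mathbb{R}^{d+1})} \;\geq\; c \exp\bigl(-(M-1)K(\Omega)\bigr),
\end{align*}
valid for every $k \geq 1$, which is exactly the claim.

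There is no real obstacle here; the only thing worth double-checking is that the projective tensor norm being referred to is the one built from the Euclidean norm on $\mathbb{R}^d$ (matching the hypothesis of Lemma \ref{arrowandprojnorm}, where $V$ is a Banach space and the $\lVert v_{i_j} \rVert$ used in the bound on $\lVert (F_1 \otimes \cdots \otimes F_k)(x) \rVert_A$ are the $V$-norms). Since both the arrow norm and the projective norm in the statement are constructed from the same Euclidean norm on $\mathbb{R}^d$, the inequality in Lemma \ref{arrowandprojnorm} transfers the bound directly without any constants being lost. I would therefore present the corollary with a one-sentence proof combining the two cited results.
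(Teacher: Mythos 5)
Your proposal is correct and coincides with the paper's own primary argument: the paper's proof of Corollary \ref{pinormlowerbound} likewise chains Theorem \ref{lowerbound} with Lemma \ref{arrowandprojnorm} to get $c\exp(-(M-1)K(\Omega))\leq\left\lVert\bar{S}_{n_k}\right\rVert_{\to \text{Hom}(\mathbb{R}^{d+1},\mathbb{R}^{d+1})}\leq\left\lVert\bar{S}_{n_k}\right\rVert_{\pi}$, only adding as an optional alternative a direct rerun of the Theorem \ref{lowerbound} argument with the projective norm. Your observation that both norms are built from the same Euclidean norm on $\mathbb{R}^d$ is the right compatibility check, and nothing further is needed.
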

\begin{proof}
Without loss of generality, we assume $\gamma$ is of length $1$. There are two ways to justify this result.\newline
By Lemma \ref{arrowandprojnorm}, we know that $\lVert\cdot\rVert_{\pi}$ is a bigger norm than $\lVert\cdot\rVert_{\to \text{Hom}(\mathbb{R}^{d+1},\mathbb{R}^{d+1})}$, hence by Theorem \ref{lowerbound}, for any $c\in (0,1)$, there exists a subsequence $(n_k)_{k\geq 1}\in\mathbb{N}$ such that 
\begin{align*}
c\exp(-(M-1)K(\Omega))\leq\left\lVert\bar{S}_{n_k}\right\rVert_{\to \text{Hom}(\mathbb{R}^{d+1},\mathbb{R}^{d+1})}\leq \left\lVert\bar{S}_{n_k}\right\rVert_{\pi}.
\end{align*}
\newline
An alternative proof directly applies the argument in the proof of Theorem \ref{lowerbound} to the norm $\lVert\cdot\rVert_{\pi}$. Note that for any $x\in\mathbb{R}^{d^{\otimes k}}$, and $F:\mathbb{R}^d\to \text{Hom}(\mathbb{R}^{d+1},\mathbb{R}^{d+1})$ as defined in Definition \ref{hypermatrix}, if $x=\sum_{i\in I}v_{i_1}\otimes\cdots\otimes v_{i_k}$ for an indexing set $I$, we have
\begin{align*}
&\left\lVert(F\otimes\cdots\otimes F)(x)\right\rVert_{\text{Hom}(\mathbb{R}^{d+1},\mathbb{R}^{d+1})}\\
=&\left\lVert\sum_{i\in I}F(v_{i_1})\cdots F(v_{i_k})\right\rVert_{\text{Hom}(\mathbb{R}^{d+1},\mathbb{R}^{d+1})}\\
\leq&\sum_{i\in I}\left\lVert F(v_{i_1})\right\rVert_{\text{Hom}(\mathbb{R}^{d+1},\mathbb{R}^{d+1})}\cdots\lVert F(v_{i_k})\rVert_{\text{Hom}(\mathbb{R}^{d+1},\mathbb{R}^{d+1})}\\
\leq&\left\lVert F\right\rVert^k_{\text{Hom}(\mathbb{R}^d,\text{Hom}(\mathbb{R}^{d+1},\mathbb{R}^{d+1}))}\sum_{i\in I}\left\lVert v_{i_1}\right\rVert\cdots\left\lVert v_{i_k}\right\rVert,
\end{align*}
for an abritrary representation of $x$. Hence
\begin{align*}
\left\lVert (F\otimes\cdots\otimes F)(x)\right\rVert_{\text{Hom}(\mathbb{R}^{d+1},\mathbb{R}^{d+1})}\leq\left\lVert F\right\rVert^k_{\text{Hom}(\mathbb{R}^d,\text{Hom}(\mathbb{R}^{d+1},\mathbb{R}^{d+1}))}\lVert x\rVert_{\pi}\leq \lVert x\rVert_{\pi}. 
\end{align*}
Therefore when we equip $(\mathbb{R}^d)^{\otimes k}$ with $\lVert\cdot\rVert_{\pi}$, we have
\begin{align*}
\lVert (F\otimes\cdots\otimes F)\rVert_{\text{Hom}\left(\left(\mathbb{R}^d\right)^{\otimes k}, \text{Hom}(\mathbb{R}^{d+1},\mathbb{R}^{d+1})\right)}\leq 1.
\end{align*}
As in the proof of Theorem \ref{lowerbound}, and if we use $\lVert\cdot\rVert_{\pi}$ instead of $\lVert\cdot\rVert_{\to \text{Hom}(\mathbb{R}^{d+1},\mathbb{R}^{d+1})}$, (\ref{poissonineqderive}) becomes
\begin{align*}
&\exp(\alpha-(M-1)K(\Omega))\\
\leq&\left\lVert\Gamma_{\alpha}\right\rVert_{\text{Hom}(\mathbb{R}^{d+1},\mathbb{R}^{d+1})}\\
\leq&\sum_{n=0}^\infty\alpha^n\left\lVert\int_{0<u_1<\cdots<u_k<1} F(\mathrm{d}\gamma_{u_1})\cdots F(\mathrm{d}\gamma_{u_n})\right\rVert_{\text{Hom}(\mathbb{R}^{d+1},\mathbb{R}^{d+1})}\\
=&\sum_{n=0}^{\infty}\alpha^n\left\lVert(F\otimes\cdots\otimes F)\int_{0<u_1<\cdots<u_n<1}\mathrm{d}\gamma_{u_1}\otimes\cdots\otimes\mathrm{d}\gamma_{u_n}\right\rVert_{\text{Hom}(\mathbb{R}^{d+1},\mathbb{R}^{d+1})}\\
\leq&\sum_{n=0}^\infty\alpha^n\left\lVert\int_{0<u_1<\cdots<u_n<1}\mathrm{d}\gamma_{u_1}\otimes\cdots\otimes\mathrm{d}\gamma_{u_n}\right\rVert_{\pi}\\
\leq&\sum_{n=0}^\infty\frac{\alpha^n}{n!}\left\lVert\bar{S}\right\rVert_{\pi}
\end{align*}
and then the rest of the proof of Theorem \ref{lowerbound} applies.
\end{proof}
\begin{rmk}
We have seen from the conjecture in \cite{chang2018super} that we expect the $n$-th root of the $n$-th term in the signature of a path of finite length multiplied by $n!$ to converge to the length of the path under a reasonable tensor algebra norm. Hambly and Lyons \cite{hambly2010uniqueness} showed that (Theorem \ref{hamblylyonslimit}) a stronger decay result holds in special cases: If $\gamma$ is a path of finite length $L>0$ with the modulus of continuity of its derivative $\delta(\epsilon)=o(\epsilon^{3/4})$, then
\begin{align*}
L^{-k}k!\left\lVert\int_{0<u_1<\cdots<u_k<1}\mathrm{d}\gamma_{u_1}\otimes\cdots\otimes\mathrm{d}\gamma_{u_k}\right\rVert_{\to \text{Hom}(\mathbb{R}^{d+1},\mathbb{R}^{d+1})}\to1
\end{align*} 
as $n\to\infty$. However we have seen from Example \ref{strongdecaycounter} such a strong result does not hold for piecewise linear paths at least under the Hilbert-Schmidt norm. The significance of Theorem \ref{lowerbound} and Corollary \ref{pinormlowerbound} is that we have a stronger result for a piecewise linear path than stated in the conjecture.
\end{rmk}
\section{Inverting the signature of a path}
Assume $\gamma:[0,1]\rightarrow\mathbb{R}^d$ is a continuous bounded-variation path with derivative $f:(0,1)\rightarrow\mathbb{R}^d$ such that $\lVert f(t)\rVert_2=1$ for all $t\in(0,1)$ almost everywhere. Assume $\gamma$ is linear on $[s,t]\subset[0,1]$, and $\theta\in(s,t)$. For $n\geq1$, choose $p\in\{1,\cdots,n+1\}$ such that $p=\left\lfloor\theta(n+2)\right\rfloor$. In this section we use the result of Theorem \ref{thmconvergetozeroupperbound}, i.e. there exists $\epsilon^\gamma_{\theta,n}$ such that $\lVert I_{p,n}(f(\theta))-\bar{S}_{n+1}\rVert_{\pi}\leq\epsilon^\gamma
_{\theta,n}$ and $\epsilon^\gamma_{\theta,n}\rightarrow0$ as $n\rightarrow\infty$.\newline
Define the set
\begin{align*}
A^\gamma_{\theta,n}:=\left\{x\in\mathbb{R}^d:\lVert x\rVert_2=1, \left\lVert I_{p,n}(x)-\bar{S}_{n+1}\right\rVert_{\pi}\leq\epsilon^\gamma_{\theta, n}\right\}.
\end{align*}
Note $f(\theta)\in A^\gamma_{\theta,n}$. We adopt these notations in this section. We first note the following simple lemma.
\begin{lem}
The projective tensor norm $\lVert\cdot\rVert_{\pi}$ satisfies Definition \ref{norm}.
\end{lem}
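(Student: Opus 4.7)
The plan is to verify the two conditions of Definition \ref{norm} for the norm
\begin{align*}
\lVert x\rVert_\pi := \inf\left\{\sum_{i\in I}\lVert v_{i_1}\rVert\cdots\lVert v_{i_k}\rVert : x=\sum_{i\in I}v_{i_1}\otimes\cdots\otimes v_{i_k},\, v_{i_j}\in V\right\}
\end{align*}
on $V^{\otimes k}$.

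For the permutation invariance, I would start from any representation $x=\sum_{i\in I}v_{i_1}\otimes\cdots\otimes v_{i_k}$ and observe that applying $\sigma\in S(k)$ termwise produces a representation $\sigma(x)=\sum_{i\in I}v_{i_{\sigma^{-1}(1)}}\otimes\cdots\otimes v_{i_{\sigma^{-1}(k)}}$ whose associated sum of products $\sum_i\lVert v_{i_{\sigma^{-1}(1)}}\rVert\cdots\lVert v_{i_{\sigma^{-1}(k)}}\rVert$ is identical to the original, since multiplication of scalars is commutative. Passing to the infimum in one direction and then using $\sigma^{-1}$ to get the other direction yields $\lVert\sigma(x)\rVert_\pi=\lVert x\rVert_\pi$.

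For the multiplicativity $\lVert v\otimes\omega\rVert_\pi=\lVert v\rVert_\pi\lVert\omega\rVert_\pi$ with $v\in V^{\otimes n}$ and $\omega\in V^{\otimes m}$, the inequality $\leq$ is the easy half: given admissible representations $v=\sum_i v_{i_1}\otimes\cdots\otimes v_{i_n}$ and $\omega=\sum_j\omega_{j_1}\otimes\cdots\otimes\omega_{j_m}$, their concatenation $v\otimes\omega=\sum_{i,j}v_{i_1}\otimes\cdots\otimes v_{i_n}\otimes\omega_{j_1}\otimes\cdots\otimes\omega_{j_m}$ has sum-of-products equal to the product of the two sums, and minimising each factor gives $\lVert v\otimes\omega\rVert_\pi\leq\lVert v\rVert_\pi\lVert\omega\rVert_\pi$.

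The main obstacle is the reverse inequality $\geq$, which I would prove by Hahn--Banach duality. Choose linear functionals $\phi$ on $V^{\otimes n}$ and $\psi$ on $V^{\otimes m}$ with $|\phi(z)|\leq\lVert z\rVert_\pi$, $|\psi(z')|\leq\lVert z'\rVert_\pi$, $\phi(v)=\lVert v\rVert_\pi$, and $\psi(\omega)=\lVert\omega\rVert_\pi$. Applying these bounds to rank-one tensors shows that the corresponding multilinear forms $\Phi:V^n\to\mathbb{R}$ and $\Psi:V^m\to\mathbb{R}$ satisfy $|\Phi(a_1,\ldots,a_n)|\leq\lVert a_1\rVert\cdots\lVert a_n\rVert$ and analogously for $\Psi$. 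The product form $\Xi(a_1,\ldots,a_n,b_1,\ldots,b_m):=\Phi(a_1,\ldots,a_n)\Psi(b_1,\ldots,b_m)$ is then an $(n+m)$-linear form on $V$ majorised by the product of norms, and so extends to a linear functional $\xi$ on $V^{\otimes(n+m)}$ with $|\xi(u_1\otimes\cdots\otimes u_{n+m})|\leq\lVert u_1\rVert\cdots\lVert u_{n+m}\rVert$. For any representation $v\otimes\omega=\sum_k u_{k_1}\otimes\cdots\otimes u_{k_{n+m}}$ one then estimates
\begin{align*}
\lVert v\rVert_\pi\lVert\omega\rVert_\pi=\phi(v)\psi(\omega)=\xi(v\otimes\omega)\leq\sum_k\lVert u_{k_1}\rVert\cdots\lVert u_{k_{n+m}}\rVert,
\end{align*}
and taking the infimum over all representations delivers $\lVert v\rVert_\pi\lVert\omega\rVert_\pi\leq\lVert v\otimes\omega\rVert_\pi$, completing the proof. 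The delicate step is the existence of the Hahn--Banach extensions on the finite-dimensional spaces $V^{\otimes n}$ and $V^{\otimes m}$ equipped with their projective norms, but this is a standard application and the rest is bookkeeping.
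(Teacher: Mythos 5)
Your proof is correct, but note that the paper itself states this lemma without any proof at all, so there is no argument of the authors' to compare against; what you have written supplies the missing justification. Your treatment of the two properties is sound: permutation invariance follows, as you say, because $\sigma$ maps elementary tensors to elementary tensors, so each representation of $x$ reindexes to a representation of $\sigma(x)$ with the same cost, and the two-sided argument with $\sigma^{-1}$ closes the gap; for multiplicativity, the concatenation-of-representations argument gives $\left\lVert v\otimes\omega\right\rVert_{\pi}\leq\left\lVert v\right\rVert_{\pi}\left\lVert\omega\right\rVert_{\pi}$, and your duality argument for the reverse inequality — norming functionals $\phi,\psi$ (which exist by Hahn--Banach, and trivially in the paper's finite-dimensional setting $V=\mathbb{R}^d$), the product multilinear form $\Xi$ bounded by the product of the factor norms, its linearisation $\xi$ on $V^{\otimes(n+m)}$, and the estimate $\phi(v)\psi(\omega)=\xi(v\otimes\omega)\leq\sum_k\lVert u_{k_1}\rVert\cdots\lVert u_{k_{n+m}}\rVert$ followed by the infimum — is precisely the standard proof that the projective norm is a cross norm compatible with the splitting $V^{\otimes(n+m)}\cong V^{\otimes n}\otimes V^{\otimes m}$. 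The only step worth making explicit is the identity $\xi(v\otimes\omega)=\phi(v)\psi(\omega)$, which follows by expanding $v$ and $\omega$ in arbitrary algebraic representations and using linearity of $\phi$, $\psi$ and $\xi$; with that spelled out, the argument is complete. An alternative, shorter route would be to cite the associativity of the projective tensor product together with the cross-norm property of $\hat\otimes_{\pi}$, but your self-contained argument is exactly how those cited facts are proved, so nothing is lost.
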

We now give a strategy to invert the signature of a non-degenerate piecewise linear path. 
\begin{thm}\label{setconverge}
Assume $\gamma:[0,1]\rightarrow\mathbb{R}^d$ is a non-degenerate piecewise linear path with derivative $f:(0,1)\rightarrow\mathbb{R}^d$ such that $\lVert f(t)\rVert_2=1$ for all $t\in(0,1)$ if defined. Assume $\gamma$ is differentiable at $\theta\in(0,1)$. For $n\geq1$, choose $p\in\{1,\cdots,n+1\}$ such that $p=\left\lfloor\theta(n+2)\right\rfloor$. Then there exists a subsequence $(n_k)_{k\geq 1}\in\mathbb{N}$ such that for all $k\geq 1$, for all $x_{\theta,n_k}, y_{\theta,n_k}\in A^\gamma_{\theta, n_k}$, 
\begin{align*}
\left\lVert x_{\theta,n_k}-y_{\theta,n_k}\right\rVert_{2}\rightarrow0\quad\text{as}\quad k\rightarrow\infty. 
\end{align*}
\end{thm}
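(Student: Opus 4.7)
The plan is to combine a standard triangle-inequality argument with the contraction identity for the insertion map from Lemma \ref{normrelation} and the subsequential lower bound on $\|\bar S_n\|_\pi$ from Corollary \ref{pinormlowerbound}. Concretely, for any $x, y \in A^\gamma_{\theta,n}$, I would first apply the triangle inequality to get
\begin{align*}
\|I_{p,n}(x) - I_{p,n}(y)\|_\pi \leq \|I_{p,n}(x) - \bar S_{n+1}\|_\pi + \|\bar S_{n+1} - I_{p,n}(y)\|_\pi \leq 2\epsilon^\gamma_{\theta,n}.
\end{align*}
Because the lemma immediately preceding the theorem tells us $\|\cdot\|_\pi$ satisfies Definition \ref{norm}, I can invoke Lemma \ref{normrelation} with the projective tensor norm to conclude $\|I_{p,n}(x) - I_{p,n}(y)\|_\pi = \|\bar S_n\|_\pi\, \|x-y\|_2$, using that on the base level $\mathbb{R}^d = V^{\otimes 1}$ the projective tensor norm reduces to the Euclidean norm.

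Combining the two displays yields the basic estimate
\begin{align*}
\|x - y\|_2 \leq \frac{2\epsilon^\gamma_{\theta,n}}{\|\bar S_n\|_\pi}.
\end{align*}
Since Theorem \ref{thmconvergetozeroupperbound} guarantees $\epsilon^\gamma_{\theta,n} \to 0$ as $n \to \infty$, the right-hand side will vanish provided the denominator stays bounded away from zero. One cannot hope for this for every $n$---Example \ref{strongdecaycounter} shows genuine full-sequence decay can occur---but only along a subsequence. This is exactly what Corollary \ref{pinormlowerbound} supplies: fixing any $c \in (0,1)$, there is a strictly increasing $(n_k)_{k\geq 1} \subset \mathbb{N}$ with $\|\bar S_{n_k}\|_\pi \geq c\exp(-(M-1)K(\Omega))$, where $M$ is the number of linear pieces of $\gamma$ and $2\Omega > 0$ is the smallest angle between adjacent edges, both finite and strictly positive by the non-degeneracy hypothesis.

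Along this subsequence the estimate becomes
\begin{align*}
\|x_{\theta,n_k} - y_{\theta,n_k}\|_2 \leq \frac{2\epsilon^\gamma_{\theta,n_k}}{c\exp(-(M-1)K(\Omega))} \longrightarrow 0 \quad \text{as } k \to \infty,
\end{align*}
which is the required conclusion. The argument is essentially mechanical once the subsequence is produced, and the only subtlety worth highlighting is that the subsequence $(n_k)$ supplied by Corollary \ref{pinormlowerbound} depends only on $\gamma$, not on $\theta$ or on the choice of $x, y \in A^\gamma_{\theta,n_k}$, so the estimate holds uniformly over all admissible pairs simultaneously. The main obstacle one might worry about---that the upper bound $\epsilon^\gamma_{\theta,n}$ and the lower bound on $\|\bar S_n\|_\pi$ fail to interact usefully because the latter holds only subsequentially---is precisely resolved by the fact that the upper bound holds for \emph{every} sufficiently large $n$, and in particular on any subsequence.
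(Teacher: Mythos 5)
Your proposal is correct and follows essentially the same route as the paper: triangle inequality on $A^\gamma_{\theta,n}$ to bound $\lVert I_{p,n}(x)-I_{p,n}(y)\rVert_\pi$ by $2\epsilon^\gamma_{\theta,n}$, the identity of Lemma \ref{normrelation} applied with the projective norm, and the subsequential lower bound of Corollary \ref{pinormlowerbound} (the paper simply fixes $c=\tfrac12$ rather than keeping $c$ general). No gaps to report.
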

\begin{proof}
Note for all $n\geq 1$, $x_{\theta,n},y_{\theta,n}\in A^\gamma_{\theta,n}$, 
\begin{align*}
&\left\lVert I_{p,n}(x_{\theta,n})-I_{p,n}(y_{\theta,n})\right\rVert_{\pi}\\
\leq&\left\lVert I_{p,n}(x_{\theta,n})-\bar{S}_{n+1}\right\rVert_{\pi}+\left\lVert I_{p,n}(y_{\theta,n})-\bar{S}_{n+1}\right\rVert_{\pi}\\
\leq&2\epsilon^\gamma_{\theta,n}.
\end{align*}
By Corollary \ref{pinormlowerbound}, there exists a subsequence $(n_k)_{k\geq 1}\in\mathbb{N}$ such that for all $k\geq 1$, $\lVert\bar{S}_{n_k}\rVert_{\pi}\geq \frac{1}{2}\exp(-(M-1)K(\Omega))$, where $K(\Omega)=\log(\frac{2}{1-\text{cos}|\Omega|})$, $2\Omega$ is the smallest angle between two adjacent edges, and $M>0$ is the number of linear pieces of $\gamma$. Then by Lemma \ref{normrelation}, 
\begin{align*}
&\left\lVert x_{\theta,n_k}-y_{\theta,n_k}\right\rVert_{2}\\
=&\frac{\left\lVert I_{p,n_k}(x_{\theta,n_k})-I_{p,n_k}(y_{\theta,n_k})\right\rVert_{\pi}}{\left\lVert\bar{S}_{n_k}\right\rVert_{\pi}}\\
\leq&\frac{4\epsilon^\gamma_{\theta,n_k}}{\exp(-(M-1)K(\Omega))}.
\end{align*}
Since $\epsilon_{\theta,n_k}\rightarrow0$ as $k\rightarrow\infty$, we have $\lVert x_{\theta,n_k}-y_{\theta,n_k}\rVert_{2}\rightarrow0$ as $k\rightarrow\infty$.
\end{proof}
We are then able to derive a corollary which is more useful for computation.
\begin{cor}\label{derapprox}
Assume $\gamma:[0,1]\rightarrow\mathbb{R}^d$ is a non-degenerate piecewise linear path with derivative $f:(0,1)\rightarrow\mathbb{R}^d$ such that $\lVert f(t)\rVert_2=1$ for all $t\in(0,1)$ if defined. Assume $\gamma$ is differentiable at $\theta\in(0,1)$. For $n\geq1$, choose $p\in\{1,\cdots,n+1\}$ such that $p=\left\lfloor\theta(n+2)\right\rfloor$. Define
\begin{equation}\label{optimisationsln}
x^*_{\theta,n}:=argmin_{x\in\mathbb{R}^d,\left\lVert x\right\rVert_2=1}\left\lVert I_{p,n}(x)-\bar{S}_{n+1}\right\rVert_{\pi}.
\end{equation}
Then there exists at least a subsequence $(n_k)_{k\geq 1}\in\mathbb{N}$ such that $x^*_{n_k}$ converges to $f(\theta)$ as $k$ increases. 
\end{cor}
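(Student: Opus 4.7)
The plan is to reduce this directly to Theorem \ref{setconverge}, using the fact that $f(\theta)$ is itself a feasible point of the optimisation problem defining $x^*_{\theta,n}$.

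First I would observe that the feasible set $\{x\in\mathbb{R}^d:\lVert x\rVert_2=1\}$ contains $f(\theta)$ (since $\gamma$ is differentiable at $\theta$ and unit speed), so the optimisation in \eqref{optimisationsln} is well-posed, and any minimiser $x^*_{\theta,n}$ satisfies
\begin{align*}
\lVert I_{p,n}(x^*_{\theta,n})-\bar{S}_{n+1}\rVert_\pi \;\leq\; \lVert I_{p,n}(f(\theta))-\bar{S}_{n+1}\rVert_\pi \;\leq\; \epsilon^\gamma_{\theta,n},
\end{align*}
where the second inequality is the content of Theorem \ref{thmconvergetozeroupperbound} combined with the preceding lemma asserting that $\lVert\cdot\rVert_\pi$ satisfies Definition \ref{norm}. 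Consequently $x^*_{\theta,n}\in A^\gamma_{\theta,n}$ for every $n$. At the same time, the bound above applied to $f(\theta)$ shows that $f(\theta)\in A^\gamma_{\theta,n}$ for every $n$ as well.

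Next, I would invoke Theorem \ref{setconverge} with the choices $x_{\theta,n_k}:=x^*_{\theta,n_k}$ and $y_{\theta,n_k}:=f(\theta)$ for the subsequence $(n_k)_{k\geq 1}$ supplied by that theorem (which itself comes from the lower bound in Corollary \ref{pinormlowerbound}). Since both points lie in $A^\gamma_{\theta,n_k}$, Theorem \ref{setconverge} yields
\begin{align*}
\lVert x^*_{\theta,n_k}-f(\theta)\rVert_2 \longrightarrow 0 \quad\text{as }k\to\infty,
\end{align*}
which is precisely the statement of the corollary.

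I do not expect any substantive obstacle: all the heavy lifting has already been done, namely (i) the vanishing upper bound $\epsilon^\gamma_{\theta,n}\to 0$ from Theorem \ref{thmconvergetozeroupperbound}, and (ii) the non-trivial lower bound on $\lVert\bar{S}_{n_k}\rVert_\pi$ along a subsequence from Corollary \ref{pinormlowerbound}, which together force the diameter of $A^\gamma_{\theta,n_k}$ (measured via the Lipschitz relation in Lemma \ref{normrelation}) to shrink to zero. The only minor point to verify is that $x^*_{\theta,n}$ is indeed attained, which follows from the continuity of $x\mapsto\lVert I_{p,n}(x)-\bar{S}_{n+1}\rVert_\pi$ and the compactness of the unit sphere in $\mathbb{R}^d$; if uniqueness of the minimiser fails, the argument applies to any measurable selection, since only membership in $A^\gamma_{\theta,n_k}$ is used.
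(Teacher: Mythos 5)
Your argument is correct and matches the paper's own proof essentially step for step: both show that minimality of $x^*_{\theta,n}$ together with $\lVert I_{p,n}(f(\theta))-\bar{S}_{n+1}\rVert_\pi\leq\epsilon^\gamma_{\theta,n}$ places both $x^*_{\theta,n}$ and $f(\theta)$ in $A^\gamma_{\theta,n}$, and then apply Theorem \ref{setconverge} along its subsequence. Your additional remark about attainment of the minimiser via compactness of the sphere is a harmless extra verification (the paper handles existence separately in Lemma \ref{existencesol}).
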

\begin{proof}
By Theorem \ref{setconverge}, we know that there exists a subsequence $(n_k)_{k\geq 1}\in\mathbb{N}$ such that for all $k\geq 1$, for all $x_{\theta,n_k}, y_{\theta,n_k}\in A^\gamma_{\theta, n_k}$, $\lVert x_{\theta,n_k}-y_{\theta,n_k}\rVert_{2}\rightarrow0$ as $k\rightarrow\infty$. We know that $f(\theta)\in A^\gamma_{\theta, n_k}$, and $\left\lVert I_{p,n_k}(x^*_{\theta,n_k})-\bar{S}_{n_k+1}\right\rVert_{\pi}\leq\epsilon^\gamma_{\theta,n_k}$ due to the fact that $x^*_{\theta,n_k}$ gives the shortest distance between $I_{p,n_k}(x)$ and $\bar{S}_{n_k+1}$ among all $x\in\mathbb{R}^d$ such that $\lVert x\rVert_2=1$. Therefore $x^*_{\theta,n_k}\in A^\gamma_{\theta,n_k}$, hence $\left\lVert x^*_{\theta,n_k}-f(\theta)\right\rVert_{2}\rightarrow0$ as $k\rightarrow\infty$.
\end{proof}
We can also develop such an algorithm for another set of paths. First we recall the following theorem by Hambly and Lyons \cite{hambly2010uniqueness}.
\begin{thm}[Hambly and Lyons, Theorem 9 \cite{hambly2010uniqueness}]\label{hamblylyonslimit}
Let $J$ be a closed and bounded interval. Let $\gamma:J\to\mathbb{R}^d$ be a continuous path of finite length $\ell>0$. Recall that the modulus of continuity of the derivative is defined as $\delta(h):=\sup_{|u-v|\leq h}\left\lVert\dot{\gamma}(u)-\dot{\gamma}(v)\right\rVert_2$. If $\delta(h)=o\left(h^{3/4}\right)$, then
\begin{align*}
\ell^kk!\left\lVert\int_{0<u_1<\cdots<u_k<1}\mathrm{d}\gamma_{u_1}\otimes\cdots\otimes\mathrm{d}\gamma_{u_k}\right\rVert_{\to\text{Hom}\left(\mathbb{R}^{d+1},\mathbb{R}^{d+1}\right)}\to 1
\end{align*}
as $k\to\infty$.
\end{thm}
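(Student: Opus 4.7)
The plan is to follow the Cartan development strategy of Hambly and Lyons that already underlies the proof of Theorem \ref{lowerbound} in this paper, but pushed quantitatively from a lower bound on a subsequence to a genuine limit. First I would normalise: reparametrising at unit speed we may take $\ell=1$ and $J=[0,1]$. The upper bound $\|\bar{S}_k\|_{\to\text{Hom}(\mathbb{R}^{d+1},\mathbb{R}^{d+1})}\le 1$ is immediate, since the Cartan development factors through the norm-one embedding $F$ of Definition \ref{hypermatrix} and $k!\|S^k_{0,1}(\gamma)\|\le \ell^k = 1$; so the content of the theorem is the matching lower limit.

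Next I would scale the path, setting $\gamma_\alpha := \alpha\gamma$ and writing $\Gamma_\alpha$ for its Cartan development in hyperbolic space with origin $o$, exactly as in the proof of Theorem \ref{lowerbound}. Two ingredients couple $\|\bar{S}_k\|_{\to\text{Hom}}$ to $\Gamma_\alpha$: the operator-norm expansion
$$\|\Gamma_\alpha\|_{\text{Hom}(\mathbb{R}^{d+1},\mathbb{R}^{d+1})}\le\sum_{k=0}^\infty\frac{\alpha^k}{k!}\|\bar{S}_k\|_{\to\text{Hom}(\mathbb{R}^{d+1},\mathbb{R}^{d+1})},$$
and the geometric lower bound $\|\Gamma_\alpha\|_{\text{Hom}}\ge e^{d(o,\Gamma_\alpha o)}$ from Proposition 3.13 of Hambly--Lyons. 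Multiplying through by $e^{-\alpha}$, the right-hand side is the Poisson expectation $\mathbb{E}[\|\bar{S}_N\|_{\to\text{Hom}}]$ with $N\sim\text{Poisson}(\alpha)$, so it suffices to establish the sharp hyperbolic-distance estimate $d(o,\Gamma_\alpha o)\ge \alpha - o(\alpha)$ as $\alpha\to\infty$.

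The hyperbolic-distance estimate is where the regularity hypothesis $\delta(h)=o(h^{3/4})$ is used, and this is the main obstacle. The strategy is to partition $[0,1]$ into $m$ equal subintervals; on each piece the direction of $\dot\gamma$ varies by at most $\delta(1/m)$, so $\Gamma_\alpha$ restricted to that piece is a small perturbation of a hyperbolic geodesic segment of length $\alpha/m$. A second-order comparison with the geodesic in $\mathbb{H}^{d+1}$ loses distance of order $(\alpha/m)\delta(1/m)^2$ per piece from the triangle-inequality defect, and summing over the $m$ pieces the cumulative loss is $O(\alpha\,\delta(1/m)^2)$; the exponent $3/4$ is precisely tuned so that choosing $m=m(\alpha)\to\infty$ appropriately gives $\alpha\,\delta(1/m)^2 = o(\alpha)$. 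This yields $e^{-\alpha}\sum_k (\alpha^k/k!)\|\bar{S}_k\|_{\to\text{Hom}}\ge e^{-o(\alpha)}\to 1$.

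Finally, to upgrade from the Poisson-averaged limit to the pointwise statement $\|\bar{S}_k\|_{\to\text{Hom}}\to 1$, I would exploit Poisson concentration (standard deviation $\sqrt{\alpha}$ around $\alpha$) together with the uniform bound $\|\bar{S}_k\|_{\to\text{Hom}}\le 1$: the argument used in Theorem \ref{lowerbound} together with Lemma \ref{poissontail} already extracts an $n_k\to\infty$ with $\|\bar{S}_{n_k}\|_{\to\text{Hom}}\to 1$. To extend this to every $k$, I would supply a near-monotonicity estimate of the form $\|\bar{S}_{k+1}\|_{\to\text{Hom}}\ge \|\bar{S}_k\|_{\to\text{Hom}}(1-\varepsilon_k)$ with $\varepsilon_k\to 0$, obtained by inserting an additional unit tangent vector into $\bar S_k$ (as in Lemma \ref{normrelation}) and controlling the defect by a shuffle-type identity. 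I expect the subtle bookkeeping of the hyperbolic second-order loss --- that is, showing $(\alpha/m)\delta(1/m)^2$ is the correct per-piece cost and that no first-order term survives --- to be the principal technical difficulty, since the exponent $3/4$ is sharp and relies on cancellations specific to the curvature of $\mathbb{H}^{d+1}$.
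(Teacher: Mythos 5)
The paper does not actually prove this statement---it is quoted verbatim from Hambly and Lyons \cite{hambly2010uniqueness} and used as a black box in Theorem \ref{differentiablepathreconstruction}---so the only fair comparison is with the original proof, whose overall architecture (hyperbolic development of the scaled path $\alpha\gamma$, the bounds $\exp(d(o,\Gamma_\alpha o))\leq\lVert\Gamma_\alpha\rVert\leq\sum_k\frac{\alpha^k}{k!}\lVert\bar{S}_k\rVert_{\to\text{Hom}}$, and a Poisson/Abelian argument, i.e.\ the same machinery this paper borrows for Theorem \ref{lowerbound}) your outline reproduces correctly. The problem is that both steps that carry the real content are left as gaps. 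First, the central estimate $d(o,\Gamma_\alpha o)\geq\alpha-o(\alpha)$: your error accounting (a per-piece ``second-order'' loss of $(\alpha/m)\delta(1/m)^2$, total $\alpha\,\delta(1/m)^2$) cannot be the right bookkeeping, because it makes no use of the hypothesis $\delta(h)=o(h^{3/4})$ at all---under that accounting any path with merely continuous derivative would satisfy the theorem, since one may take $m\to\infty$ arbitrarily slowly. The Euclidean intuition ``chord $\geq$ length $\times\cos(\text{oscillation})$'' does not transfer: comparing the development with a concatenation of geodesic chords in $\mathbb{H}^{d+1}$ incurs hinge/corner defects and propagation-of-error terms whose size depends on the hyperbolic length $\alpha/m$ of the pieces (errors between nearby developments are amplified over long pieces, and the piecewise-geodesic lemma of \cite{hambly2010uniqueness} requires the edges to be long relative to the corner cost $K(\Omega)=\log\frac{2}{1-\cos|\Omega|}$). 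Balancing these competing losses against the piece length is precisely the multi-lemma technical core of Hambly--Lyons and is where the exponent $3/4$ arises; your proposal acknowledges this is the main difficulty but supplies no argument for it.

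Second, the Tauberian step from the Poisson-averaged statement to the pointwise limit is also genuinely missing. Concentration plus $\lVert\bar{S}_k\rVert_{\to\text{Hom}}\leq1$ only shows that the set of ``bad'' indices $k$ with $\lVert\bar{S}_k\rVert_{\to\text{Hom}}\leq1-\epsilon$ has vanishing Poisson measure; since an individual index near $\alpha$ carries mass of order $\alpha^{-1/2}$, this does not exclude a sparse sequence of exceptional levels, so it yields only a subsequential limit (as in Theorem \ref{lowerbound}), not $\lVert\bar{S}_k\rVert_{\to\text{Hom}}\to1$ for every $k$. The near-monotonicity estimate $\lVert\bar{S}_{k+1}\rVert\geq(1-\varepsilon_k)\lVert\bar{S}_k\rVert$ you invoke would indeed close this gap, but it is not proved and does not follow from the tools in this paper: Lemma \ref{normrelation} and the insertion argument rely on the multiplicativity and permutation-invariance of Definition \ref{norm}, which are not available for the operator-type norm $\lVert\cdot\rVert_{\to\text{Hom}(\mathbb{R}^{d+1},\mathbb{R}^{d+1})}$, and Theorem \ref{thmconvergetozeroupperbound} (which could otherwise relate $\bar{S}_{k+1}$ to an insertion into $\bar{S}_k$) is proved only when $\gamma$ is linear in a neighbourhood of the insertion point. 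So both the quantitative hyperbolic-distance bound and the pointwise upgrade need to be supplied before this constitutes a proof.
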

\begin{thm}\label{differentiablepathreconstruction}
Let $\gamma:[0,1]\to\mathbb{R}^d$ be a continuous path with derivative $f:(0,1)\to\mathbb{R}^d$ such that $\left\lVert f(t)\right\rVert_2=1$ for all $t\in(0,1)$. Suppose further that the modulus of continuity of $f$ is $\delta(h)=o\left(h^{3/4}\right)$. Assume $\gamma$ is linear over the interval $[s,t]\subset[0,1]$ and $\theta\in(s,t)$. Then for $n\geq 1$, choose $p\in\{1,...,n+1\}$ such that $p=\left\lfloor\theta(n+2)\right\rfloor$. Define
\begin{align*}
x^*_{\theta,n}:=argmin_{x\in\mathbb{R}^d,\lVert x\rVert_2=1}\left\lVert I_{p,n}(x)-\bar{S}_{n+1}\right\rVert_{\pi}.
\end{align*}
Then $x^*_{\theta,n}$ converges to $f(\theta)$ as $n$ increases.
\end{thm}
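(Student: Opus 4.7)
The plan is to mirror the argument structure of Corollary \ref{derapprox}, combining the upper bound from Theorem \ref{thmconvergetozeroupperbound} with a lower bound on $\lVert\bar{S}_n\rVert_{\pi}$ and the isometry identity of Lemma \ref{normrelation}. The essential difference from the piecewise linear setting is that the regularity hypothesis $\delta(h)=o(h^{3/4})$ activates Theorem \ref{hamblylyonslimit}, which provides a \emph{limit} rather than merely a subsequential lower bound; this upgrade is precisely what will allow the conclusion to hold along the full sequence rather than only along a subsequence.

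First I would use Theorem \ref{hamblylyonslimit} to control $\lVert\bar{S}_n\rVert_{\pi}$ from below. Since $\gamma$ is parametrised at unit speed on $[0,1]$, its length is $\ell=1$, and $\bar{S}_n$ is exactly $n!$ times the $n$-th iterated integral appearing there. Therefore Theorem \ref{hamblylyonslimit} gives
\begin{align*}
\bigl\lVert\bar{S}_n\bigr\rVert_{\to\text{Hom}(\mathbb{R}^{d+1},\mathbb{R}^{d+1})}\to 1\quad\text{as}\quad n\to\infty.
\end{align*}
By Lemma \ref{arrowandprojnorm}, $\lVert\bar{S}_n\rVert_{\pi}\geq\lVert\bar{S}_n\rVert_{\to\text{Hom}(\mathbb{R}^{d+1},\mathbb{R}^{d+1})}$, so $\liminf_{n\to\infty}\lVert\bar{S}_n\rVert_{\pi}\geq 1$. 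In particular there exists $N\in\mathbb{N}$ such that $\lVert\bar{S}_n\rVert_{\pi}\geq\tfrac{1}{2}$ for every $n\geq N$.

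Next, I would apply Theorem \ref{thmconvergetozeroupperbound} to the linear piece $[s,t]$ containing $\theta$ to obtain a sequence $\epsilon^{\gamma}_{\theta,n}\to 0$ with $\lVert I_{p,n}(f(\theta))-\bar{S}_{n+1}\rVert_{\pi}\leq\epsilon^{\gamma}_{\theta,n}$. Because $f(\theta)$ is a feasible point for the optimisation defining $x^*_{\theta,n}$, optimality gives
\begin{align*}
\bigl\lVert I_{p,n}(x^*_{\theta,n})-\bar{S}_{n+1}\bigr\rVert_{\pi}\leq\epsilon^{\gamma}_{\theta,n}.
\end{align*}
Combining the triangle inequality with Lemma \ref{normrelation} (applicable to the projective tensor norm, which was observed to satisfy Definition \ref{norm}) yields
\begin{align*}
\bigl\lVert\bar{S}_n\bigr\rVert_{\pi}\bigl\lVert x^*_{\theta,n}-f(\theta)\bigr\rVert_{2}
=\bigl\lVert I_{p,n}(x^*_{\theta,n})-I_{p,n}(f(\theta))\bigr\rVert_{\pi}
\leq 2\epsilon^{\gamma}_{\theta,n}.
\end{align*}
Dividing by the uniform lower bound $\lVert\bar{S}_n\rVert_{\pi}\geq\tfrac{1}{2}$ for $n\geq N$ gives $\lVert x^*_{\theta,n}-f(\theta)\rVert_{2}\leq 4\epsilon^{\gamma}_{\theta,n}\to 0$.

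The main obstacle I anticipate is a bookkeeping one rather than a conceptual one: checking that the ambient hypotheses of the auxiliary results are truly met, in particular that $\gamma$ is tree-reduced so that Theorem \ref{thmconvergetozeroupperbound} applies, and that the unit-speed, $\ell=1$ normalisation implicit in Theorem \ref{hamblylyonslimit} is consistent with the definition of the normalised signature used here. Once this is spelled out, the argument is structurally identical to Corollary \ref{derapprox}, with the decisive improvement that the lower bound $\lVert\bar{S}_n\rVert_{\pi}\geq\tfrac{1}{2}$ holds for \emph{all} sufficiently large $n$ (not merely subsequentially, as in Corollary \ref{pinormlowerbound}), so that the convergence $x^*_{\theta,n}\to f(\theta)$ can be asserted along the full sequence.
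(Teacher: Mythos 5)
Your proposal is correct and follows essentially the same route as the paper's own proof: the Hambly--Lyons limit (Theorem \ref{hamblylyonslimit}) plus Lemma \ref{arrowandprojnorm} give a lower bound on $\lVert\bar{S}_n\rVert_{\pi}$ valid for all sufficiently large $n$, which is combined with the upper bound $\epsilon^{\gamma}_{\theta,n}$, the optimality of $x^*_{\theta,n}$, and the identity of Lemma \ref{normrelation} to conclude along the full sequence. The only differences are cosmetic (your explicit constant $\tfrac{1}{2}$ versus the paper's $1-c$, and not phrasing things via the set $A^{\gamma}_{\theta,n}$).
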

\begin{proof}
By Theorem \ref{hamblylyonslimit}, for any $c\in(0,1)$, there exists $N\in\mathbb{N}$ such that for all $n\geq N$, 
\begin{align*}
\left\lVert\bar{S}_n\right\rVert_{\to\text{Hom}\left(\mathbb{R}^{d+1},\mathbb{R}^{d+1}\right)}\geq 1-c.
\end{align*}
By Lemma \ref{arrowandprojnorm}, the projective norm is bigger than the norm $\lVert\cdot\rVert_{\to\text{Hom}\left(\mathbb{R}^{d+1},\mathbb{R}^{d+1}\right)}$, hence for all $n\geq N$, 
\begin{align*}
\left\lVert\bar{S}_n\right\rVert_{\pi}\geq 1-c.
\end{align*}
Then for all $n\geq N$, for all $x_{\theta,n}, y_{\theta,n}\in A^\gamma_{\theta,n}$, we have
\begin{align*}
&\left\lVert x_{\theta,n}-y_{\theta,n}\right\rVert_2\\
=&\frac{\left\lVert I_{\theta,n}(x_{\theta,n})-I_{\theta,n}(y_{\theta,n})\right\rVert_{\pi}}{\left\lVert\bar{S}_n\right\rVert_{\pi}}\\
\leq&\frac{2\epsilon^\gamma_{\theta,n}}{1-c}.
\end{align*}
Since $\epsilon^\gamma_{\theta,n}\to 0$ as $n\to\infty$, we have $\left\lVert x_{\theta,n}-y_{\theta,n}\right\rVert_2\to 0$ as $n\to\infty$. Since $x^*_{\theta,n}, f(\theta)\in A^\gamma_{\theta,n}$, we have 
\begin{align*}
\left\lVert x^*_{\theta,n}-f(\theta)\right\rVert_2\to 0\quad\text{as}\quad n\to\infty.
\end{align*}
\end{proof}
\begin{rmk}
Note that if we take $p=\left\lfloor\theta(n+2)\right\rfloor$, we may get $p=0$ if $n$ is small. But we can always take higher orders of the signature, and this will not affect our result.
\end{rmk}
Note that so far in this section we have assumed that the underlying path is parametrised at unit speed. However, in practice when we only have the information from the signature, it is impossible to know whether the path is parametrised at unit speed. We prove in the following lemma that our algorithm still works with a slight alteration.
\begin{lem}\label{differentscale}
For a non-degenerate piecewise linear path $\gamma:[a,b]\to\mathbb{R}^d$ of length $L>0$ and differentiable at $\theta\in(a,b)$, we can slightly change (\ref{optimisationsln}) and obtain an approximation to the derivative of $\gamma$ when it is parametrised at unit speed when we choose the position of insertion $p$ appropriately, even if the original speed of parametrisation is unknown. Moreover the same changes apply to the result of Theorem \ref{differentiablepathreconstruction}.
\end{lem}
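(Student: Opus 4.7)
The plan is to exploit the reparametrisation-invariance of the signature and reduce the general case to the unit-speed case already treated by Corollary \ref{derapprox} and Theorem \ref{differentiablepathreconstruction}. Since the signature does not depend on the parametrisation, I will reparametrise $\gamma:[a,b]\to\mathbb{R}^d$ so that it lives on $[0,1]$ at constant speed $L$; call this reparametrisation $\tilde{\gamma}$. In this parametrisation, the parameter $u\in(0,1)$ coincides with the arc-length fraction of $\gamma$, so the ``appropriate'' choice of insertion position is $p=\lfloor\theta(n+2)\rfloor$, where $\theta\in(0,1)$ is now understood as the arc-length fraction at which the tangent is to be recovered (rather than a point in the original parameter interval $(a,b)$).

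The first step is to record the scaling behaviour under the auxiliary rescaling $\hat{\gamma}:=\tilde{\gamma}/L$, which is unit-speed on $[0,1]$ with unit-norm derivative $\hat{f}$. Linearity of the defining integrals gives
\begin{align*}
I^{\tilde{\gamma}}_{p,n}(x) = L^n\, I^{\hat{\gamma}}_{p,n}(x), \qquad \bar{S}^{\tilde{\gamma}}_n = L^n\, \bar{S}^{\hat{\gamma}}_n,
\end{align*}
so that Theorem \ref{thmconvergetozeroupperbound} applied to $\hat{\gamma}$ transfers to
\begin{align*}
\left\lVert I^{\tilde{\gamma}}_{p,n}\!\left(L\hat{f}(\theta)\right) - \bar{S}^{\tilde{\gamma}}_{n+1}\right\rVert_{\pi} \leq L^{n+1}\,\epsilon^{\hat{\gamma}}_{\theta,n},
\end{align*}
while Corollary \ref{pinormlowerbound} applied to $\hat{\gamma}$ supplies a subsequence $(n_k)$ along which $\lVert\bar{S}^{\tilde{\gamma}}_{n_k}\rVert_{\pi} \geq c L^{n_k}\exp\!\left(-(M-1)K(\Omega)\right)$. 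The proof of Lemma \ref{normrelation} uses only the admissible-norm axioms of Definition \ref{norm} and not the unit-speed hypothesis, so the identity $\lVert I^{\tilde{\gamma}}_{p,n}(x)-I^{\tilde{\gamma}}_{p,n}(y)\rVert_{\pi} = \lVert\bar{S}^{\tilde{\gamma}}_n\rVert_{\pi}\,\lVert x-y\rVert_2$ continues to hold verbatim.

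The second step is the announced modification of (\ref{optimisationsln}): I drop the unit-norm constraint and define
\begin{align*}
y^*_{\theta,n} := \arg\min_{x\in\mathbb{R}^d}\left\lVert I_{p,n}(x)-\bar{S}_{n+1}\right\rVert_{\pi},
\end{align*}
returning $y^*_{\theta,n}/\lVert y^*_{\theta,n}\rVert_2$. The reason for dropping the constraint is that the true target $\tilde{f}(\theta) = L\hat{f}(\theta)$ lies on the sphere of radius $L$, not $1$, and $L$ is unknown a priori. Repeating the proof of Theorem \ref{setconverge} with the transferred bounds yields, along $(n_k)$,
\begin{align*}
\left\lVert y^*_{\theta,n_k} - L\hat{f}(\theta)\right\rVert_2 \;\leq\; \frac{2L^{n_k+1}\epsilon^{\hat{\gamma}}_{\theta,n_k}}{c L^{n_k}\exp\!\left(-(M-1)K(\Omega)\right)} \longrightarrow 0,
\end{align*}
and since $L>0$ the normalisation $y^*_{\theta,n_k}/\lVert y^*_{\theta,n_k}\rVert_2$ converges to $\hat{f}(\theta)$, which is precisely the derivative of $\gamma$ in its unit-speed parametrisation. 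The same recipe extends to Theorem \ref{differentiablepathreconstruction}: the condition $\delta(h)=o(h^{3/4})$ is preserved under the rescaling $\gamma\mapsto\gamma/L$ (the modulus is merely multiplied by the constant $1/L$), so Theorem \ref{hamblylyonslimit} applied to $\hat{\gamma}$ yields $\lVert\bar{S}^{\hat{\gamma}}_n\rVert_{\pi}\geq 1-c$ for large $n$, hence $\lVert\bar{S}^{\tilde{\gamma}}_n\rVert_{\pi}\geq (1-c)L^n$, and the argument goes through.

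The main delicate point is the well-posedness of the unconstrained optimisation, i.e.\ that $y^*_{\theta,n}$ is attained and that every minimiser converges to $\tilde{f}(\theta)$. Existence follows from coercivity of $x\mapsto\lVert I_{p,n}(x)-\bar{S}_{n+1}\rVert_{\pi}$, which in turn reduces to injectivity of $I_{p,n}$; by Lemma \ref{normrelation} this is equivalent to $\lVert\bar{S}_n\rVert_{\pi}>0$, a property guaranteed for all sufficiently large $n$ by the non-degeneracy / tree-reducedness of $\gamma$ via the Boedihardjo--Geng result already invoked in the proof of Theorem \ref{thmconvergetozeroupperbound}.
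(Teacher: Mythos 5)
Your argument is correct, but it takes a genuinely different route from the paper's. The paper works with the unit-speed reparametrisation $\tilde{\gamma}=\gamma\circ\phi$ directly: it first justifies the choice of insertion position by bounding the insertion error by the expectation of $\lVert x-\dot{\tilde{\gamma}}_t\rVert$ against a Beta$(p,n-p+2)$ density, whence $p=\left\lfloor\frac{\phi(\theta)-u}{v-u}(n+2)\right\rfloor$ (this is the part that ties $p$ back to the \emph{original} parameter $\theta\in(a,b)$, which you instead sidestep by redefining $\theta$ as the arc-length fraction), and then uses reparametrisation invariance $S^n_{u,v}(\tilde{\gamma})=S^n_{a,b}(\gamma)$ to turn the normalised comparison into problem (\ref{notunitspeedopt}), $\min_{\lVert x\rVert_2=1}\lVert L\,I^{\gamma}_{p,n}(x)-(n+1)!S^{n+1}_{a,b}(\gamma)\rVert_{\pi}$, i.e.\ it keeps the unit-sphere constraint and absorbs the unknown speed into an explicit factor $L$ (which is why the subsequent remark and Example \ref{digit8egch5} stress that the method needs the length, estimated from the signature asymptotics). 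You instead reduce to the already-proved unit-speed results via the spatial rescaling $\hat{\gamma}=\tilde{\gamma}/L$, track the powers of $L$ through Theorem \ref{thmconvergetozeroupperbound}, Corollary \ref{pinormlowerbound} and the (speed-independent) identity of Lemma \ref{normrelation}, and modify (\ref{optimisationsln}) by dropping the constraint and normalising the minimiser; the feasibility of $L\hat{f}(\theta)$ plus coercivity (from $\lVert I_{p,n}(x)\rVert=\lVert\bar{S}_n\rVert\,\lVert x\rVert>0$) makes this sound, and the factor $L^{n+1}/L^{n}=L$ cancels so the bound still vanishes along the subsequence. What your version buys is that the length need not be known in advance — indeed $\lVert y^*_{\theta,n_k}\rVert_2\to L$ drops out as a by-product, and under the $\ell^2$ norm the unconstrained problem is plain least squares with $A^TA=\lVert\bar{S}_n\rVert_2^2 I$ by Lemma \ref{findsingularvals}; what the paper's version buys is the explicit link between $p$ and the original time $\theta$ via $\phi$, and a formulation that matches the constrained problems (\ref{l2minimisation}) and (\ref{notunitspeedmatrix}) used in the computations. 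Two cosmetic points: your projective-norm lower bound from Theorem \ref{hamblylyonslimit} should be routed through Lemma \ref{arrowandprojnorm} (as in the proof of Theorem \ref{differentiablepathreconstruction}), and the application of Theorem \ref{thmconvergetozeroupperbound} to $\hat{\gamma}$ uses that a non-degenerate piecewise linear path is tree-reduced, which is the same standing assumption the paper relies on.
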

\begin{proof}
Let the function $\phi:[a,b]\to[u,v]$ be such that the path $\tilde{\gamma}:=\gamma\circ\phi$ is parametrised at unit speed.\newline
We first try to determine what value $p$ should be, i.e. the position at which the element shall be inserted into the $n$-th level of the normalised signature of $\tilde{\gamma}$. For any norm which satisfies properties stated in Definition \ref{norm}, if we insert $x\in\mathbb{R}^d$ into the $n$-th level of the signature of $\tilde{\gamma}$, then 
\begin{align*}
&L^{-(n+1)}\bigg\lVert\int_{u<t_1<\cdots<t_{n+1}<v}(n+1)!\dot{\tilde{\gamma}}_{t_1}\otimes\cdots\otimes\dot{\tilde{\gamma}}_{t_{p-1}}\otimes x\otimes\dot{\tilde{\gamma}}_{t_{p+1}}\otimes\cdots\otimes\dot{\tilde{\gamma}}_{t_{n+1}}\mathrm{d}t_1\cdots\mathrm{d}t_{n+1}\\
&-(n+1)!\int_{u<t_1<\cdots<t_{n+1}<v}\dot{\tilde{\gamma}}_{t_1}\otimes\cdots\otimes\dot{\tilde{\gamma}}_{t_{n+1}}\mathrm{d}t_1\cdots\mathrm{d}t_{n+1}\bigg\rVert\\
=&L^{-(n+1)}(n+1)!\bigg\lVert\int_{u<t_1<\cdots<t_{n+1}<v}\dot{\tilde{\gamma}}_{t_1}\otimes\cdots\otimes\dot{\tilde{\gamma}}_{t_{p-1}}\\
&\otimes(x-\dot{\tilde{\gamma}}_{t_p})\otimes\dot{\tilde{\gamma}}_{t_{p+1}}\otimes\cdots\otimes\dot{\tilde{\gamma}}_{t_{n+1}}\mathrm{d}t_1\cdots\mathrm{d}t_{n+1}\bigg\rVert\\
\leq&\int_{u<t<v}\lVert x-\dot{\tilde{\gamma}}_t\rVert L^{-(n+1)}(n+1)!\frac{(t-u)^{p-1}}{(p-1)!}\frac{(v-t)^{n+1-p}}{(n+1-p)!}\mathrm{d}t,
\end{align*} 
which gives rise to the expectation of a function about a \emph{non-standard beta variable} $U\sim$ Beta$(p,n-p+2)$ over the interval $(u,v)$. We can change the variable in the integral to obtain a standard beta variable $U^\prime:=(U-u)/(v-u)$, which is  over the interval $(0,1)$. Therefore we can see that the expectation of $U$ is $p(v-u)/(n+2)+u$. If we want to minimise the the upper bound obtained on the difference, we shall choose
\begin{align*}
p=\left\lfloor\frac{\phi(\theta)-u}{v-u}(n+2)\right\rfloor
\end{align*}
in order to approximate the derivative of $\tilde{\gamma}$ at $\phi(\theta)$.\newline 
With a slight extension of the analysis so far, we see that for $n\geq 1$, the solution to
\begin{equation}\label{optimisationwithlength}
\min_{\left\lVert x\right\rVert_2=1}\left\lVert L^{-n}I^{\tilde{\gamma}}_{p,n}(x)-L^{-(n+1)}(n+1)!S^{n+1}_{u,v}(\tilde{\gamma})\right\rVert_{\pi}
\end{equation}
gives an approximation to the derivative of $\tilde{\gamma}$ at $\phi(\theta)$, where 
\begin{align*}
I^{\tilde{\gamma}}_{p,n}(x):=n!\int_{u<t_1<\cdots<t_n<v}\mathrm{d}\tilde{\gamma}_{t_1}\otimes\cdots\otimes\mathrm{d}\tilde{\gamma}_{t_{p-1}}\otimes x\otimes\mathrm{d}\tilde{\gamma}_{t_p}\otimes\cdots\otimes\mathrm{d}\tilde{\gamma}_{t_n}.
\end{align*}
Note 
\begin{align*}
S^n_{u,v}(\tilde{\gamma})&=\int_{u<t_1<\cdots<t_n<v}\mathrm{d}\tilde{\gamma}_{t_1}\otimes\cdots\otimes\mathrm{d}\tilde{\gamma}_{t_n}\\
&=\int_{a<t_1<\cdots<t_n<b}(\phi^\prime(t_1)\mathrm{d}\gamma_{\phi(t_1)})\otimes\cdots\otimes(\phi^\prime(t_n)\mathrm{d}\gamma_{\phi(t_n)})\\
&=\int_{a<t_1<\cdots<t_n<b}\mathrm{d}\gamma_{t_1}\otimes\cdots\otimes\mathrm{d}\gamma_{t_n}\\
&=S^n_{a,b}(\gamma).
\end{align*}
Hence (\ref{optimisationwithlength}) can be written as a problem about $\gamma$:
\begin{align*}
\min_{\left\lVert x\right\rVert_2=1}\left\lVert L^{-n}I^{\gamma}_{p,n}(x)-L^{-(n+1)}(n+1)!S^{n+1}_{a,b}(\gamma)\right\rVert_{\pi},
\end{align*}
which is equivalent to solving the following optimisation problem
\begin{equation}\label{notunitspeedopt}
\min_{\left\lVert x\right\rVert_2=1}\left\lVert L I^{\gamma}_{p,n}(x)-(n+1)!S^{n+1}_{a,b}(\gamma)\right\rVert_{\pi}.
\end{equation}
Hence if we solve problem (\ref{notunitspeedopt}), we will obtain an approximation to the derivative of $\gamma$ at $\theta$ when it is parametrised at unit speed. Therefore we can still recover the path, but maybe at a different speed of parametrisation from the underlying. The same argument clearly applies to the result of Theorem \ref{differentiablepathreconstruction}.
\end{proof}
\begin{rmk}
The significance of Lemma \ref{differentscale} is that it provides us with a generalised version of the insertion algorithm we have developed, and we will then be able to reconstruct a path even if it is parametrised at an unknown speed. In fact it shows that the insertion algorithm developed for inverting the signature of a path requires the knowledge of the length of the path. A particular example can be found in the next section in Example \ref{digit8egch5}.
\end{rmk}

\section{Computational reconstruction of a path from its signature}
\label{ch:computation}
We have seen in the previous section that a path can be reconstructed by solving an optimisation problem after inserting an element into a level of the signature of the path. In this section we demonstrate computationally how to use this method to recover a path.
Suppose $\gamma:[0,1]\rightarrow\mathbb{R}^d$ is a tree-reduced continuous bounded-variation path with derivative $f:(0,1)\rightarrow\mathbb{R}^d$ such that $\lVert f(t)\rVert_2=1$ for all $t\in(0,1)$ almost everywhere, and $\gamma$ is differentiable at $\theta\in(0,1)$. We have seen from the previous section that given certain assumptions are satisfied, the key to reconstruct the path from the signature is to solve the optimisation problem 
\begin{equation}\label{optimisation}
\min_{\lVert x\rVert_2=1} \left\lVert I_{p,n}(x)-\bar{S}_{n+1}\right\rVert_{\pi},
\end{equation}
where $p=
\left\lfloor\theta(n+2)\right\rfloor$. If we want to computationally reconstruct the path from its signature, it is necessary to consider programmes which solve the non-linear optimisation problem (\ref{optimisation}). Note in practice the projective tensor norm $\lVert\cdot\rVert_{\pi}$ is difficult to compute, we can generalise the problem to a wider set of tensor norms:\newline
\textbf{Problem.} For a norm function $\lVert\cdot\rVert$ which satisfies Definition \ref{norm}, assume a tree-reduced continuous bounded-variation path $\gamma:[0,1]\rightarrow\mathbb{R}^d$ with derivative $f:(0,1)\rightarrow\mathbb{R}^d$ such that $\lVert f(t)\rVert=1$ for all $t\in(0,1)$ almost everywhere, and $\gamma$ is differentiable at $\theta\in(0,1)$. For all $n\geq 1$, define $g:\mathbb{R}^d\to\mathbb{R}$ such that $$g(x):=\left\lVert I_{p,n}(x)-\bar{S}_{n+1}\right\rVert$$ for $p\in\{1,\cdots,n+1\}$. We are interested in the following optimisation problem 
\begin{align}\label{optprobanynorm}
\min_{\lVert x\rVert=1}g(x).
\end{align} 
\begin{lem}\label{existencesol}
There exists at least one solution to (\ref{optprobanynorm}).
\end{lem}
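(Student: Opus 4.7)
The plan is to apply the extreme value theorem: the feasible set is compact, and the objective is continuous, so a minimiser exists. I would proceed in three short steps.

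First, I would identify the feasible set. The constraint set $\{x\in\mathbb{R}^d:\lVert x\rVert=1\}$ is the unit sphere in a finite-dimensional normed space. Since all norms on $\mathbb{R}^d$ are equivalent, this set is closed and bounded, hence compact by the Heine-Borel theorem. Importantly, it is also non-empty (for instance, any basis vector has unit $\ell^2$ norm; under an arbitrary admissible norm one can still exhibit a unit vector by normalising a nonzero vector).

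Second, I would verify continuity of $g$. By Lemma \ref{normrelation} (and the corollary stating that $I_{p,n}$ is Lipschitz continuous), for any $x,y\in\mathbb{R}^d$ we have
\begin{align*}
|g(x)-g(y)| &= \bigl|\lVert I_{p,n}(x)-\bar{S}_{n+1}\rVert - \lVert I_{p,n}(y)-\bar{S}_{n+1}\rVert\bigr|\\
&\leq \lVert I_{p,n}(x)-I_{p,n}(y)\rVert = \lVert\bar{S}_n\rVert\,\lVert x-y\rVert,
\end{align*}
by the reverse triangle inequality followed by Lemma \ref{normrelation}. Hence $g$ is itself Lipschitz continuous on $\mathbb{R}^d$, and in particular continuous on the unit sphere.

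Third, I would invoke the extreme value theorem: a continuous real-valued function on a non-empty compact set attains its infimum. Therefore there exists $x^*\in\mathbb{R}^d$ with $\lVert x^*\rVert=1$ such that $g(x^*)=\min_{\lVert x\rVert=1}g(x)$, proving existence of a solution. There is no real obstacle here; the only subtlety worth flagging is that compactness of the unit sphere relies on $\mathbb{R}^d$ being finite-dimensional, which is already part of the standing hypotheses throughout the section.
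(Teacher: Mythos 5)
Your proposal is correct and follows essentially the same route as the paper: both show $g$ is Lipschitz (hence continuous) via the reverse triangle inequality together with Lemma \ref{normrelation}, and then conclude by compactness of the unit sphere in $\mathbb{R}^d$ that the minimum is attained. The only additions you make (explicit non-emptiness of the sphere and the remark on finite-dimensionality) are minor clarifications, not a different argument.
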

\begin{proof}
We first show that $g$ is a continuous function: for $x,y\in\mathbb{R}^d$, we have
\begin{align*}
\left|g(x)-g(y)\right|&\leq \left\lVert I_{p,n}(x)-I_{p,n}(y)\right\rVert\\
&=\left\lVert I_{p,n}(x-y)\right\rVert\\
&=\left\lVert x-y\right\rVert\left\lVert\bar{S}_n\right\rVert,
\end{align*}
so $g$ is Lipschitz hence continuous. The set $\{x\in\mathbb{R}^d: \lVert x\rVert =1\}$ is closed and bounded in $\mathbb{R}^d$, so there exists $x^*\in \{x\in\mathbb{R}^d: \lVert x\rVert =1\}$ such that $g(x^*)=\min_{\lVert x\rVert=1}g(x)$.
\end{proof}
After proving the existence, a natural question to ask is whether the solution is unique. In the next section we will prove that if we identify $(\mathbb{R}^d)^{\otimes n}$ with $(\mathbb{R}^d)^{n}$, then under $\ell^2$ norm the minimiser is unique using the method of Lagrange multipliers.
\subsection{Application of the method of Lagrange multipliers}
If $\left\lVert I_{p,n}(x)-\bar{S}_{n+1}\right\rVert$ is a smooth function under the norm we choose, then a practical method to find a minimum to the problem is using \emph{Lagrange multipliers}. As an example, let us consider a tree-reduced $d$-dimensional path $\gamma:[0,1]\to\mathbb{R}^d$ parametrised at unit speed, i.e. $\lVert\dot{\gamma}_t\rVert_{2}=1$. For any $n\geq 1$, $p\in\{1,\cdots,n+1\}$, let $A\in\mathbb{R}^{d^{n+1}\times d}$ denote the matrix representing the linear mapping $I_{p,n}$, and $b\in\mathbb{R}^{d^{n+1}}$ be the normalised signature of $\gamma$ at level $n+1$. We now try to find a solution to 
\begin{equation}\label{l2minimisation}
\min_{x\in\mathbb{R}^d, \lVert x\rVert_2=1}\left\lVert Ax-b\right\rVert_2.
\end{equation}
We first note the following property of $A$.
\begin{lem}\label{findsingularvals}
Assume that in $\mathbb{R}^d$, $A$ is same the matrix as in (\ref{l2minimisation}). The singular values of $A$ are the same and equal to $\left\lVert\bar{S}_n\right\rVert_2$.
\end{lem}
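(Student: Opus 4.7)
The plan is to derive the claim as a direct consequence of the norm identity in Lemma \ref{normrelation}. Taking $y=0$ in that lemma gives the homogeneity relation
\begin{equation*}
\left\lVert I_{p,n}(x)\right\rVert = \left\lVert\bar{S}_n\right\rVert\,\left\lVert x\right\rVert \qquad \forall\, x\in\mathbb{R}^d.
\end{equation*}
Since we are working with the $\ell^2$ norm on $\mathbb{R}^d$ and identifying $(\mathbb{R}^d)^{\otimes(n+1)}$ with $\mathbb{R}^{d^{n+1}}$ equipped with its $\ell^2$ norm (which satisfies Definition \ref{norm} by Lemma \ref{satifiednorm}), this translates in matrix form to
\begin{equation*}
\left\lVert Ax\right\rVert_2 = \left\lVert\bar{S}_n\right\rVert_2\,\left\lVert x\right\rVert_2\qquad \forall\,x\in\mathbb{R}^d.
\end{equation*}

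Next I would square both sides and rewrite using the standard inner-product identity
\begin{equation*}
\langle A^{\top}Ax,\,x\rangle = \left\lVert Ax\right\rVert_2^2 = \left\lVert\bar{S}_n\right\rVert_2^{\,2}\,\langle x,x\rangle.
\end{equation*}
Because this holds for every $x\in\mathbb{R}^d$ and $A^{\top}A$ is symmetric, a standard polarisation (or the uniqueness of the symmetric form representing a quadratic form) forces
\begin{equation*}
A^{\top}A = \left\lVert\bar{S}_n\right\rVert_2^{\,2}\,I_d,
\end{equation*}
where $I_d$ is the $d\times d$ identity matrix.

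Finally, the singular values of $A\in\mathbb{R}^{d^{n+1}\times d}$ are by definition the non-negative square roots of the eigenvalues of $A^{\top}A$. Since $A^{\top}A$ is a scalar multiple of the identity, all of its $d$ eigenvalues equal $\left\lVert\bar{S}_n\right\rVert_2^{\,2}$, so the singular values of $A$ are all equal to $\left\lVert\bar{S}_n\right\rVert_2$, as claimed.

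The argument is short and there is no genuine obstacle: the only thing to double-check is that the ambient $\ell^2$ norm on $\mathbb{R}^{d^{n+1}}$ is an admissible norm in the sense of Definition \ref{norm} so that Lemma \ref{normrelation} applies; this is exactly Lemma \ref{satifiednorm} with $\ell=2$.
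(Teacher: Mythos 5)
Your proof is correct, but it takes a different route from the paper. The paper proves the lemma by brute force: it writes out the entries of $A$ explicitly in terms of the coefficients $a_{i_1\cdots i_n}$ of $\bar{S}_n$ (each column of $A$ is a copy of $\bar{S}_n$ placed in the coordinates whose $p$-th letter is the corresponding basis index, with zeros elsewhere), and then computes $A^{T}A$ entry by entry to see that it is $\left\lVert\bar{S}_n\right\rVert_2^2 I_d$. You instead reuse Lemma \ref{normrelation}: with $y=0$ (and linearity, so $I_{p,n}(0)=0$) it says $A$ is an isometry up to the scalar $\left\lVert\bar{S}_n\right\rVert_2$, and then polarisation of the quadratic form $x\mapsto\lVert Ax\rVert_2^2$ together with the symmetry of $A^{T}A$ forces $A^{T}A=\left\lVert\bar{S}_n\right\rVert_2^2 I_d$; you also correctly note that the applicability of Lemma \ref{normrelation} to the $\ell^2$ setting rests on Lemma \ref{satifiednorm}. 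Your argument is shorter, more conceptual, and makes clear that the result is really just the scaled-isometry property of the insertion map specialised to $\ell^2$; the paper's computation, on the other hand, exhibits the explicit sparsity pattern of $A$, which is of some independent value for the implementation discussed in that section. Either way the conclusion on the singular values follows as you state.
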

\begin{proof}
Let $\{e_1, e_2,\cdots,e_d\}$ be a basis of $\mathbb{R}$, and $\bar{S}_n=\sum_{i\in I(n)}a_{i_1i_2\cdots i_n}e_{i_1}\otimes\cdots\otimes e_{i_n}$ for the set $I(n)$ of all words of length $n$ over the alphabet $\{1,\cdots,d\}$. Note the matrix can be obtained by applying the map $I_{p,n}$ on the basis $\{e_1,\cdots,e_n\}$ of $\mathbb{R}$, which gives elements in $\mathbb{R}^{\otimes (n+1)}$. Therefore we can identify the entries in $A$ by the bases of $\mathbb{R}$ and $\mathbb{R}^{\otimes (n+1)}$ simultaneously, and write entries of $A$ as $A_{i_1\cdots i_{n+1},j}$ for all $i\in I(n+1)$ and $j\in\{1,\cdots,d\}$. Then by the definition of $I_{p,n}$, we have
\begin{align*}
A_{i_1\cdots i_{p-1}i_pi_{p+1}\cdots i_{n+1},j}=\begin{cases} 
      a_{i_1\cdots i_{p-1}i_{p+1}\cdots i_{n+1}} & \text{if}\,j=i_p, \\
      0 & \text{otherwise}. 
   \end{cases}
\end{align*}
Hence
\begin{align*}
A^TA=\begin{pmatrix}
\left\lVert\bar{S}_n\right\rVert_2^2 & 0 & \cdots & 0\\
0 & \left\lVert\bar{S}_n\right\rVert_2^2 & 0 & \vdots \\
\vdots & 0 & \ddots & \vdots \\
0 & \cdots & 0 & \lVert\bar{S}_n\rVert_2^2
\end{pmatrix},
\end{align*}
which is a diagonal matrix with all diagonal entries equal to $\left\lVert\bar{S}_n\right\rVert_2^2$. Then by the definition of singular values, the singular values of $A$ are equal to $\left\lVert\bar{S}_n\right\rVert_2$.
\end{proof}
Because the objective function in (\ref{l2minimisation}) is differentiable, we can use the classical \emph{method of Lagrange multipliers}.\newline
Now we can show that problem (\ref{l2minimisation}) admits a unique solution on the sphere. 
\begin{prop}\label{l2uniqueness}
There exists a unique solution to problem (\ref{l2minimisation}), and we can develop an explicit formula for the minimum using the method of Lagrange multipliers.
\end{prop}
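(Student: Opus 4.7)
The plan is to exploit the remarkable consequence of Lemma \ref{findsingularvals} that on the unit sphere the mapping $x \mapsto \|Ax\|_2$ is constant. This collapses the objective $\|Ax-b\|_2$ into a standard linear functional to be extremised on the sphere, after which Cauchy--Schwarz gives uniqueness almost for free; the method of Lagrange multipliers then supplies the explicit formula.

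More precisely, I would first square the objective and expand
\[
\|Ax-b\|_2^2 = x^T A^T A x - 2 b^T A x + \|b\|_2^2.
\]
By Lemma \ref{findsingularvals}, $A^T A = \|\bar{S}_n\|_2^2\, I_d$, so on the constraint set $\|x\|_2 = 1$ the first term is the constant $\|\bar{S}_n\|_2^2$. The problem thereby reduces to
\[
\max_{\|x\|_2 = 1}\, \langle A^T b,\, x\rangle,
\]
which by Cauchy--Schwarz has the unique maximiser $x^* = A^T b/\|A^T b\|_2$ (provided $A^T b \neq 0$), yielding the explicit minimum value
\[
\min_{\|x\|_2=1}\|Ax-b\|_2 = \sqrt{\|\bar{S}_n\|_2^2 - 2\|A^T b\|_2 + \|b\|_2^2}.
\]

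To present this through Lagrange multipliers as the statement advertises, I would form $L(x,\lambda) = \|Ax-b\|_2^2 - \lambda(\|x\|_2^2 - 1)$, take the gradient in $x$, and obtain the stationarity condition $(A^T A - \lambda I)x = A^T b$. Substituting $A^T A = \|\bar{S}_n\|_2^2 I_d$ gives $(\|\bar{S}_n\|_2^2 - \lambda)x = A^T b$; combined with $\|x\|_2 = 1$ this forces $\lambda = \|\bar{S}_n\|_2^2 \mp \|A^T b\|_2$ and $x = \pm A^T b/\|A^T b\|_2$. Evaluating the objective at both candidates selects the plus sign, recovering the formula above and confirming uniqueness.

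The one subtlety requiring care is the non-degeneracy condition $A^T b \neq 0$. Since $\gamma$ is tree-reduced, Boedihardjo--Geng guarantees $\bar{S}_n \neq 0$ for large $n$, and the alignment between $A^T\bar{S}_{n+1}$ and the true derivative (which lives in $A^{\gamma}_{\theta,n}$) prevents the pathological case; I would verify this by expanding $A^T b$ component-wise using the explicit entries of $A$ computed in the proof of Lemma \ref{findsingularvals}. I expect this verification, rather than the optimisation itself, to be the main bookkeeping obstacle; once it is in place, the collapse of $A^T A$ to a scalar multiple of the identity makes the rest essentially a one-line application of Cauchy--Schwarz.
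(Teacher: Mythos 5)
Your proposal is correct, and it reaches the same answer as the paper by a slightly different and arguably more elementary route. The paper's proof applies the singular value decomposition $A=U\Sigma V^T$, uses Lemma \ref{findsingularvals} to note that all singular values equal $\lVert\bar{S}_n\rVert_2$, and reduces the problem to minimising $\sum_{i=1}^d(\lambda q_i-y_i)^2$ on the sphere in the rotated coordinates $q=V^Tx$, $y=U^Tb$, obtaining $q_i=y_i/\bigl(\sum_{j=1}^d y_j^2\bigr)^{1/2}$ and then $x^*=Vq$. You instead use the same key fact in the form $A^TA=\lVert\bar{S}_n\rVert_2^2 I_d$ to observe that $\lVert Ax-b\rVert_2^2$ is, on the constraint set, a constant minus $2\langle A^Tb,x\rangle$, so Cauchy--Schwarz gives the unique minimiser $x^*=A^Tb/\lVert A^Tb\rVert_2$ directly; this coincides with the paper's answer, since $A^Tb=\lVert\bar{S}_n\rVert_2\,V(y_1,\dots,y_d)^T$, and your Lagrange-multiplier derivation delivers the same formula as advertised in the statement. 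What your route buys is the avoidance of the explicit SVD machinery and a cleaner closed form for both the minimiser and the minimum value; what the paper's route buys is a coordinate picture in which it is transparent that only the first $d$ components of $U^Tb$ matter. One caveat on your final paragraph: the non-degeneracy condition $A^Tb\neq 0$ (equivalently, in the paper's notation, $\sum_{j=1}^d y_j^2>0$) is indeed needed for uniqueness, and you are right to flag it, but your sketch of how to verify it (tree-reducedness via Boedihardjo--Geng plus an ``alignment'' argument) is not carried out and is not immediate for every fixed $n$; note, however, that the paper's own proof silently assumes exactly the same condition when it divides by $\bigl(\sum_{j=1}^d y_j^2\bigr)^{1/2}$, so on this point your attempt is no less complete than the original, and is in fact more candid about the gap.
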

\begin{proof}
Applying \emph{singular value decomposition} on $A$, we can write
\begin{align*}
A=U\Sigma V^T,
\end{align*}
where $U\in \mathbb{R}^{d^{n+1}\times d^{n+1}}$ is an orthogonal matrix, $\Sigma\in\mathbb{R}^{d^{n+1}\times d}$ is a diagonal matrix, and $V\in\mathbb{R}^{d\times d}$ is an orthogonal matrix. Note 
\begin{align*}
\left\lVert Ax-b\right\rVert_2&=\left\lVert U\Sigma V^T x-b\right\rVert_2\\
&=\left\lVert\Sigma V^T x-U^T b\right\rVert_2.
\end{align*}
Recall from Lemma \ref{findsingularvals} that the singular values of $A$ are equal to $\left\lVert\bar{S}_n\right\rVert_{2}$. Define $\lambda:=\left\lVert\bar{S}_n\right\rVert_{2}$, and write 
\begin{align*}
\Sigma=\begin{pmatrix} 
\lambda & 0  & \cdots & 0 \\
0 & \ddots &  & \vdots \\
\vdots &  & \ddots & 0 \\
0& \cdots & 0 & \lambda\\
0 &\cdots &\cdots & 0\\
\vdots & \vdots & \vdots & \vdots
\end{pmatrix}, 
\quad
V^Tx=\begin{pmatrix}
q_1\\
q_2\\
\vdots\\
q_d
\end{pmatrix},
\quad
U^Tb=\begin{pmatrix}
y_1\\
y_2\\
\vdots\\
y_{d^{n+1}}
\end{pmatrix}.
\end{align*} 
Then 
\begin{align*}
\left\lVert\Sigma V^T x-U^Tb\right\rVert_2&=
\left\lVert \begin{pmatrix} 
\lambda & 0  & \cdots & 0 \\
0 & \ddots &  & \vdots \\
\vdots &  & \ddots & 0 \\
0& \cdots & 0 & \lambda\\
0 &\cdots &\cdots & 0\\
\vdots & \vdots & \vdots & \vdots
\end{pmatrix}
\begin{pmatrix}
q_1\\
q_2\\
\vdots\\
q_d
\end{pmatrix}
-
\begin{pmatrix}
y_1\\
y_2\\
\vdots\\
y_{d^{n+1}}
\end{pmatrix}\right\rVert_2\\
&=\left\lVert\begin{pmatrix}
\lambda q_1-y_1\\
\lambda q_2-y_2\\
\vdots\\
\lambda q_d-y_d\\
-y_{d+1}\\
\vdots\\
-y_{d^{n+1}}
\end{pmatrix}\right\rVert_2.
\end{align*}
Note that $\lVert V^Tx\rVert_2=\lVert x\rVert_2$ since $V$ is orthogonal. Note also that $q_i$ for $i=1,\cdots,d$ only appear in the first $d$ entries, and therefore (\ref{l2minimisation}) is equivalent to 
\begin{align*}
\min\sum_{i=1}^d(\lambda q_i-y_i)^2\quad\text{subject to}\quad\sum_{i=1}^dq_i^2=1.
\end{align*}
By assuming $\lVert\bar{S}_n\rVert_2> 0$, we have $\lambda> 0$, then from above we can see that $\left(\frac{y_i}{\sqrt{\sum_{j=1}^d y_j^2}}\right)_{i=1,\cdots,d}$ is the global minimum. Hence the solution to (\ref{l2minimisation}) is unique by the way our optimisation problem is proposed. Finally we get the minimum $x^*$ by $x=VV^Tx$
\end{proof}
\begin{cor}\label{l2uniquenessnotunitspeed}
Assume a tree-reduced continuous bounded-variation path $\gamma:[a,b]\to\mathbb{R}^d$ is of length $L>0$ and differentiable at $\theta\in(a,b)$, and suppose $\gamma$ is parametrised at an unknown speed. Then there exists a unique solution to problem
\begin{equation}\label{notunitspeedmatrix}
\min_{x\in\mathbb{R}^d, \lVert x\rVert_2=1}\left\lVert LAx-b\right\rVert_2,
\end{equation}
where $A$ and $b$ are as described in (\ref{l2minimisation}).
\end{cor}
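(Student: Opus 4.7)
The plan is to mimic the proof of Proposition \ref{l2uniqueness} essentially verbatim, after observing that multiplication of $A$ by the positive scalar $L$ only rescales its singular values while leaving its singular vectors untouched.

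First, since the argument of Lemma \ref{findsingularvals} is purely structural in the insertion map $I_{p,n}$ and depends only on the coefficient pattern of the underlying tensor, it applies equally in the non-unit-speed setting: all $d$ nonzero singular values of $A$ coincide, and hence all $d$ nonzero singular values of $LA$ coincide with common value $\lambda := L\lVert\bar{S}_n\rVert_2 > 0$. I would then write the singular value decomposition $LA = U\Sigma V^T$ with $U\in\mathbb{R}^{d^{n+1}\times d^{n+1}}$ and $V\in\mathbb{R}^{d\times d}$ orthogonal and $\Sigma$ carrying $\lambda$ on its first $d$ diagonal entries, and set $q := V^Tx$ and $y := U^Tb$. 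Because $V$ is orthogonal, the constraint $\lVert x\rVert_2 = 1$ is equivalent to $\lVert q\rVert_2 = 1$, and minimising $\lVert LAx - b\rVert_2^2$ reduces to
\begin{equation*}
\min_{\sum_{i=1}^d q_i^2 = 1}\;\sum_{i=1}^d (\lambda q_i - y_i)^2 \;+\; \sum_{i=d+1}^{d^{n+1}} y_i^2.
\end{equation*}

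The second sum is a constant; the first is minimised by maximising $\sum_{i=1}^d q_i y_i$ on the unit sphere of $\mathbb{R}^d$, which by Cauchy--Schwarz has the unique maximiser $q_i = y_i/\sqrt{\sum_{j=1}^d y_j^2}$, provided $(y_1,\ldots,y_d)\neq 0$ (the same mild nondegeneracy tacitly assumed in Proposition \ref{l2uniqueness}). The unique solution to (\ref{notunitspeedmatrix}) is then recovered as $x^* = Vq$.

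The only point calling for a moment's attention is the singular-value rescaling itself: for any matrix $M$ with SVD $U_M \Sigma_M V_M^T$, the identity $LM = U_M (L\Sigma_M) V_M^T$ shows that the left and right singular vectors of $LM$ agree with those of $M$, while every singular value is multiplied by $L$. This is the only place in which the extra factor $L$ enters the argument, and once it is recorded the remainder of the reasoning transfers from Proposition \ref{l2uniqueness} verbatim with $\lVert\bar{S}_n\rVert_2$ replaced by $L\lVert\bar{S}_n\rVert_2$ throughout. I therefore do not anticipate any genuine obstacle beyond this bookkeeping.
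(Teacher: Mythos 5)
Your proposal is correct and takes essentially the same route as the paper: the paper's own proof simply observes that (\ref{notunitspeedmatrix}) differs from (\ref{l2minimisation}) only by the positive constant $L$ multiplying $A$, so the singular-value-decomposition argument of Proposition \ref{l2uniqueness} applies unchanged with the singular values rescaled to $L\lVert\bar{S}_n\rVert_2$. Your explicit remark that the unique minimiser requires $(y_1,\ldots,y_d)\neq 0$ is a small refinement the paper leaves tacit, but it does not change the approach.
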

\begin{proof}
We have seen from Lemma \ref{differentscale} that if we do not know the speed of parametrisation of the path, we can solve the optimisation problem (\ref{notunitspeedopt}) to get a approximation of the derivative of the path. If we use $\ell^2$ norm, then (\ref{notunitspeedopt}) becomes (\ref{notunitspeedmatrix}). Note the only difference between (\ref{notunitspeedmatrix}) and (\ref{l2minimisation}) is the constant $L$ in front of the matrix $A$, therefore a similar analysis as in Proposition \ref{l2uniqueness} applies, and (\ref{notunitspeedmatrix}) admits a unique solution.
\end{proof}
We now demonstrate some examples of inverting the signature of a path by solving (\ref{l2minimisation}). All of the following computation is done in C++, and the graphs are plotted in MATLAB. The computation of signatures used is done via the C++ library \emph{Libalgebra} \cite{libalgebra}. The matrix computation algorithms used are from \emph{LAPACK} \cite{Anderson:1999:LUG:323215}, and the version used is provided by Intel Math Kernel Library.
\begin{eg}[Semicircle]
Let $\gamma:[0,1]\rightarrow\mathbb{R}^2$ be the path of a semicircle, i.e. $\gamma^{(1)}_t=\frac{1}{\pi}\cos(\pi t)$, $\gamma^{(2)}_t=\frac{1}{\pi}\sin(\pi t)$ for $t\in [0,1]$. If we use $\ell^2$ norm, we can use the formulae obtained in Proposition \ref{l2uniqueness} to get an approximation to the derivative of the path at different time points. Thus we are able to approximate the increments over subintervals by Mean Value Theorem, as shown in Figure \ref{cpp_lagrange_semicircle}. We can see that using higher levels of signature gives better approximations to the true path.
\end{eg}
\begin{figure}
\centering
\includegraphics[trim={4cm 10cm 3cm 10cm},clip,width=\textwidth]{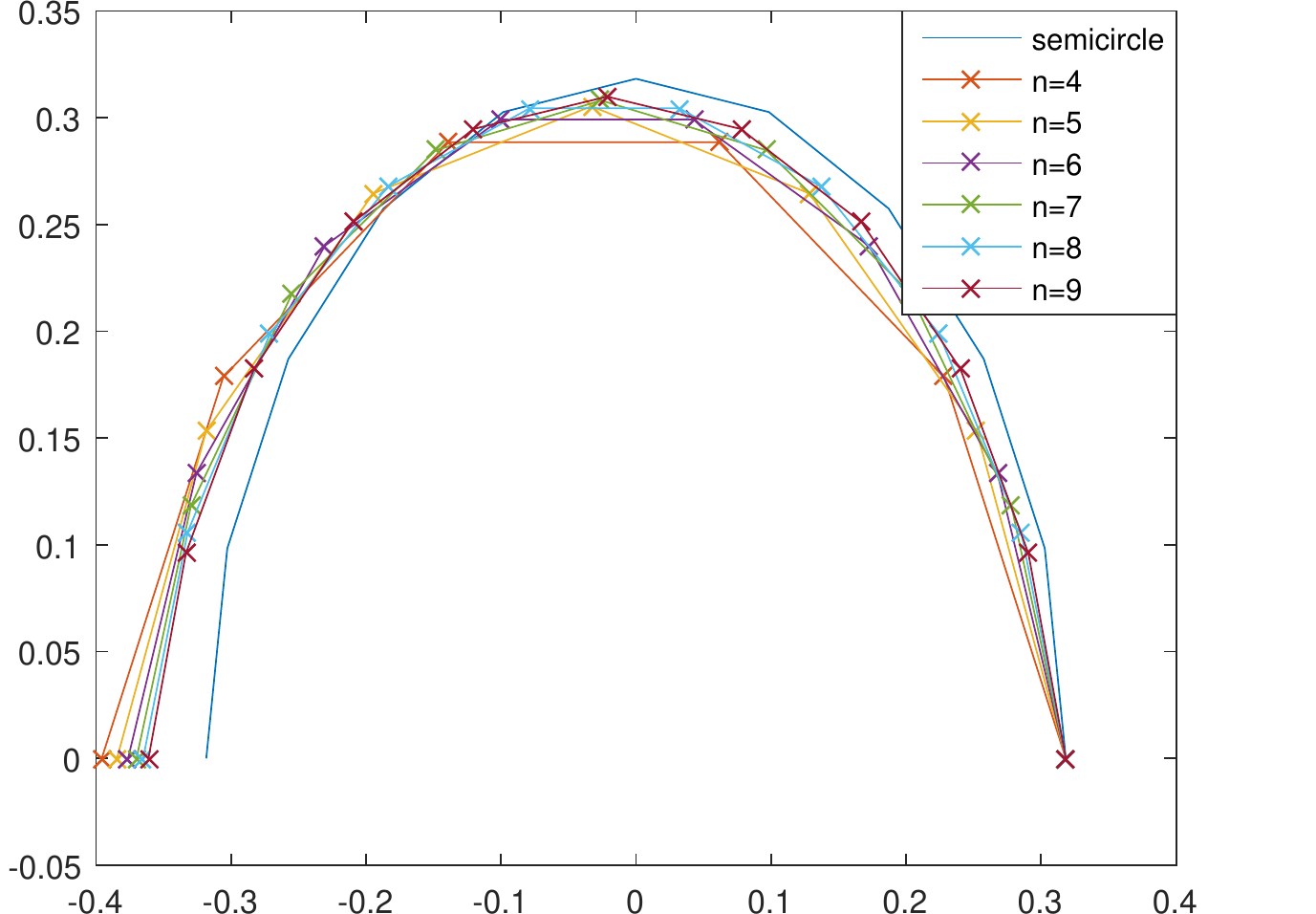}
\caption{Reconstruction of a semicircle under $\ell^2$ norm, where $n$ is the level of signature used}\label{cpp_lagrange_semicircle}
\end{figure}
\begin{eg}[Circle]
Assume in this example $\gamma:[0,1]\rightarrow\mathbb{R}^2$ is the path of a circle such that $\gamma_t=(\frac{1}{2\pi}\cos(2\pi t),\frac{1}{2\pi}\sin(2\pi t))$ for $t\in[0,1]$. Again we used the formulae obtained in Proposition \ref{l2uniqueness} to get an approximation to the derivative at different times in $\ell^2$ norm, therefore an approximation of the increments over the sub-intervals, as shown in Figure \ref{cpp_lagrange_circle}.
\end{eg}
\begin{figure}
\centering 
\includegraphics[trim={4cm 10cm 3cm 10cm},clip,width=\textwidth]{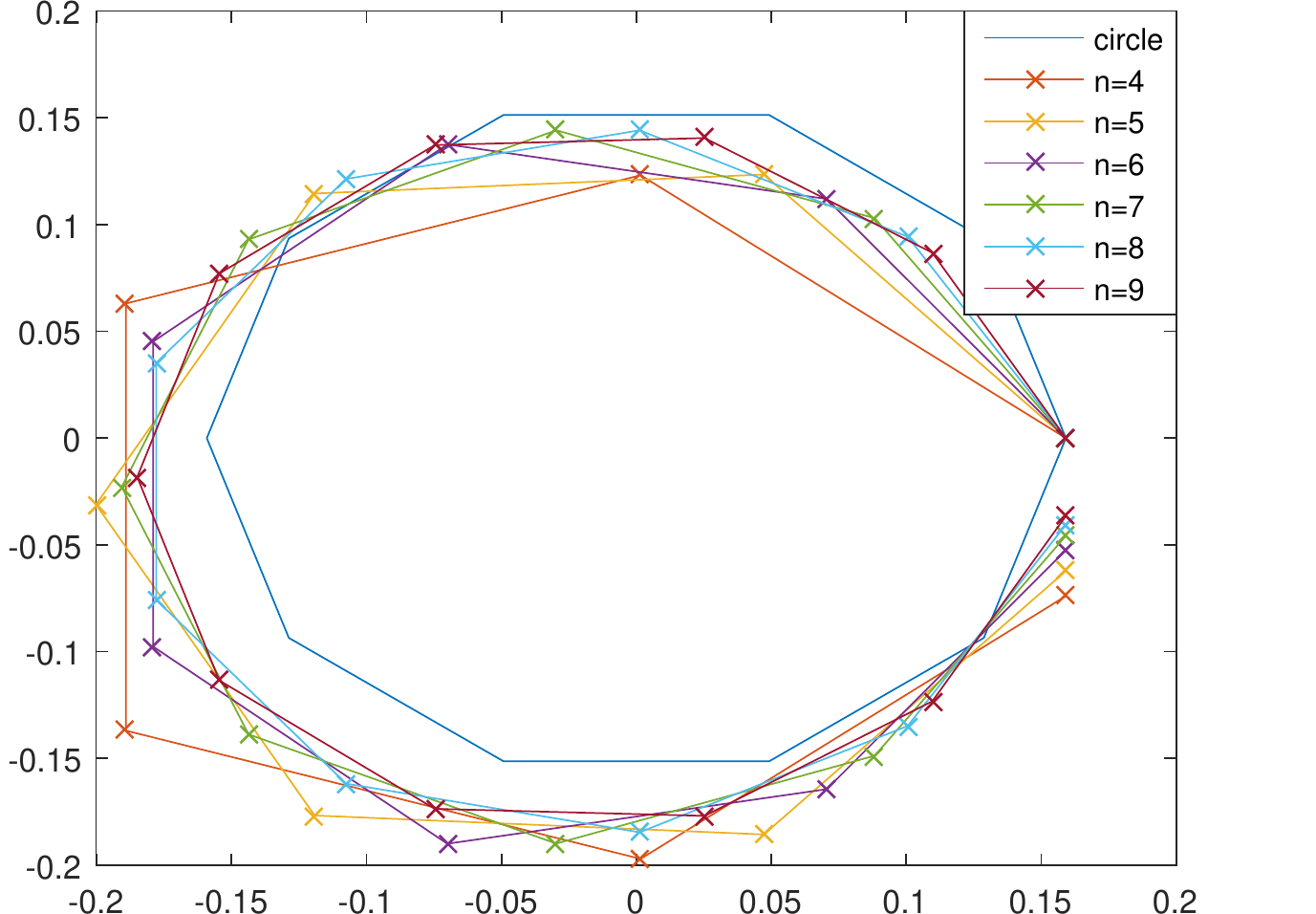}
\caption{Reconstruction of a circle under $\ell^2$ norm, where $n$ is the level of signature used}\label{cpp_lagrange_circle}
\end{figure}
\begin{eg}[Digit `8']\label{digit8egch5}
One interesting case to consider is when the path is self-crossing. A good example of this kind is digits.\newline
The dataset we use is \emph{Pen-Based Recognition of Handwritten  Digits Data Set} from UC Irvine Machine Learning Repository \cite{Dua:2017}, which is a digit database by collecting 250 handwritten digit samples from 44 writers. The dataset records the $(x,y)$ coordinates on the $2$-dimensional plane as the participants write. The raw data captured consists of integer values between $0$ and $500$, and then a resampling algorithm is applied so that the points are regularly spaced in arc length.\newline
We have taken one sample of the digit `8' from the training data, and normalise the input vectors so that they consist of values in $[0,1]$. Note that the path now is not necessarily parametrised at unit speed. In this case, we can solve a slightly altered optimisation problem by the result of Lemma \ref{differentscale}, therefore we need an approximation of the length of the path. Due to the conjecture in \cite{chang2018super}, we can approximate the length of the path by taking the $n$-th root of the $n$-th level of the signature multiplied by $n!$.\newline
We then reconstruct the underlying path using the method of Lagrange multipliers to approximate the derivative of the path at different points by the results of Proposition \ref{l2uniqueness} and Corollary \ref{l2uniquenessnotunitspeed}, and use splines to smooth the derivatives, and then integrate over $[1/(n+2),(n+1)/(n+2)]$ in MATLAB to approximate the underlying path, where $n$ is the level of the lower level signature used. Compared to the underlying path in Figure \ref{eightrue}, we can see from Figure \ref{eight4}, \ref{eight5}, \ref{eight6}, \ref{eight7}, \ref{eight8}, \ref{eight9}, and \ref{eight10} that overall we get better approximations when we use higher levels of the signature of the path. Note that the paths reconstructed are at different scales from the underlying path. This is because we have reconstructed the path parametrised at unit speed, as shown in Lemma \ref{differentscale}. Also note that if $\gamma:[u,u+L]\to\mathbb{R}^d$ is a path parametrised at unit speed and of length $L$, then
\begin{align*}
&\int_{u}^{u+L}\dot{\gamma}(t)\mathrm{d}t\\
=&L\int_0^1\dot{\gamma}(Ls+u)\mathrm{d}s,
\end{align*}
therefore the path obtained is the underlying path parametrised at unit speed and scaled by $1/L$. The shapes of the reconstructed paths are not affected even though we have a different speed of parametrisation.
\end{eg}
\begin{figure}
\begin{subfigure}{.5\textwidth}
\centering
\includegraphics[trim={4cm 10cm 3cm 10cm},clip,width=0.9\linewidth]{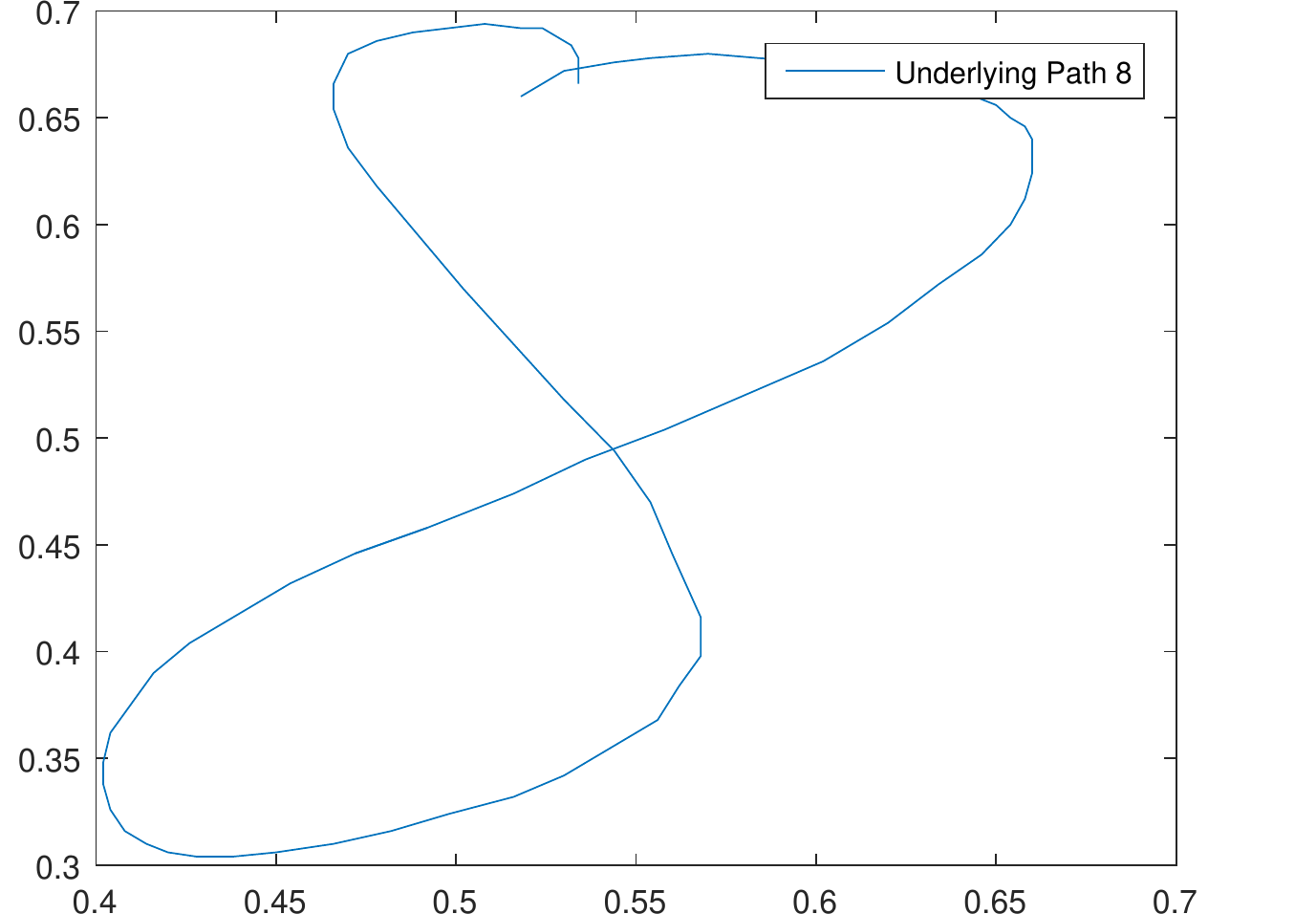}
\caption{Underlying path of the digit `8'}
\label{eightrue}
\end{subfigure}%
\begin{subfigure}{.5\textwidth}
\centering
\includegraphics[trim={4cm 10cm 3cm 10cm},clip,width=0.9\linewidth]{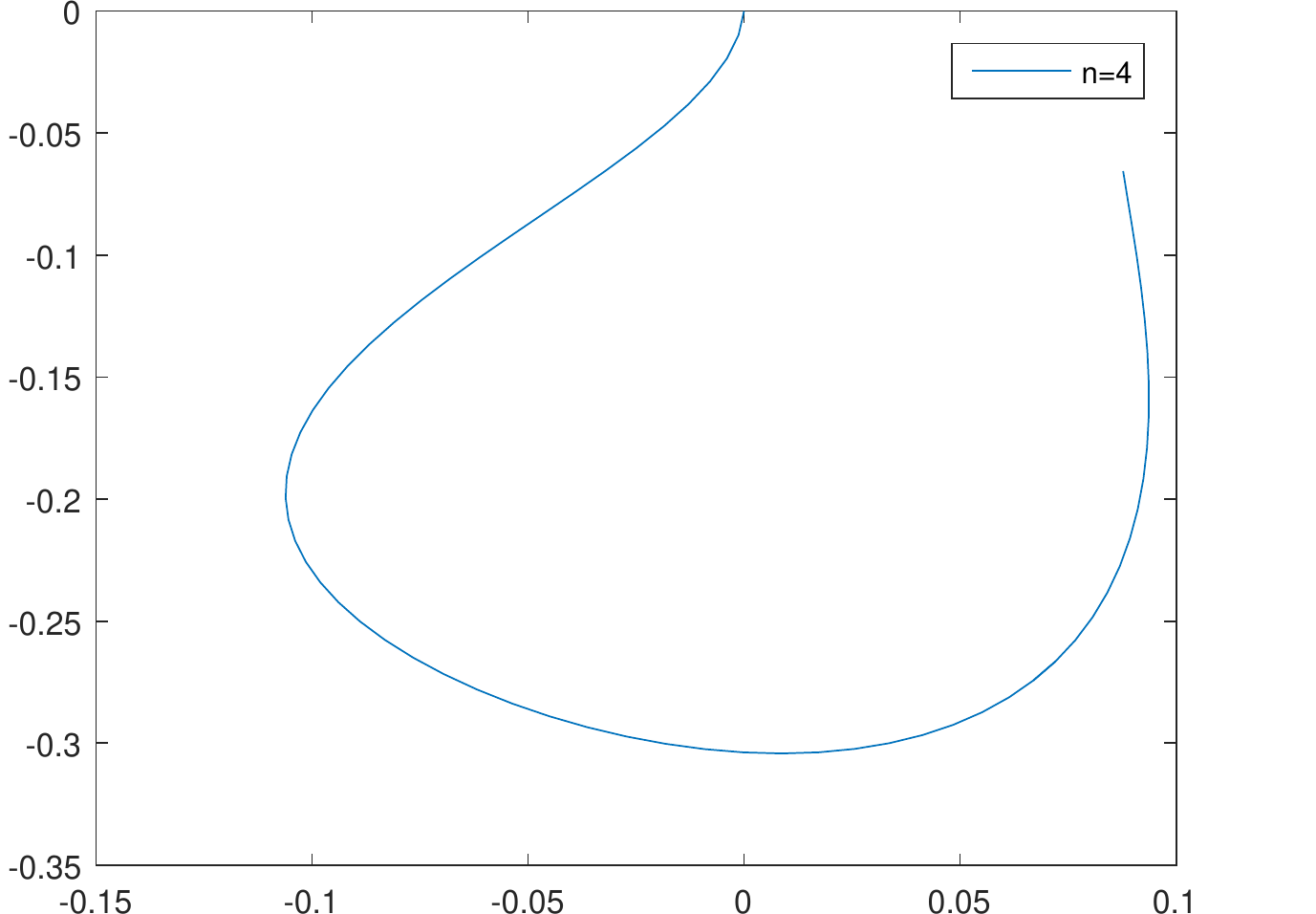}
\caption{Reconstruction using signature level 4}
\label{eight4}
\end{subfigure}

\begin{subfigure}{.5\textwidth}
\centering
\includegraphics[trim={4cm 10cm 3cm 10cm},clip,width=0.9\linewidth]{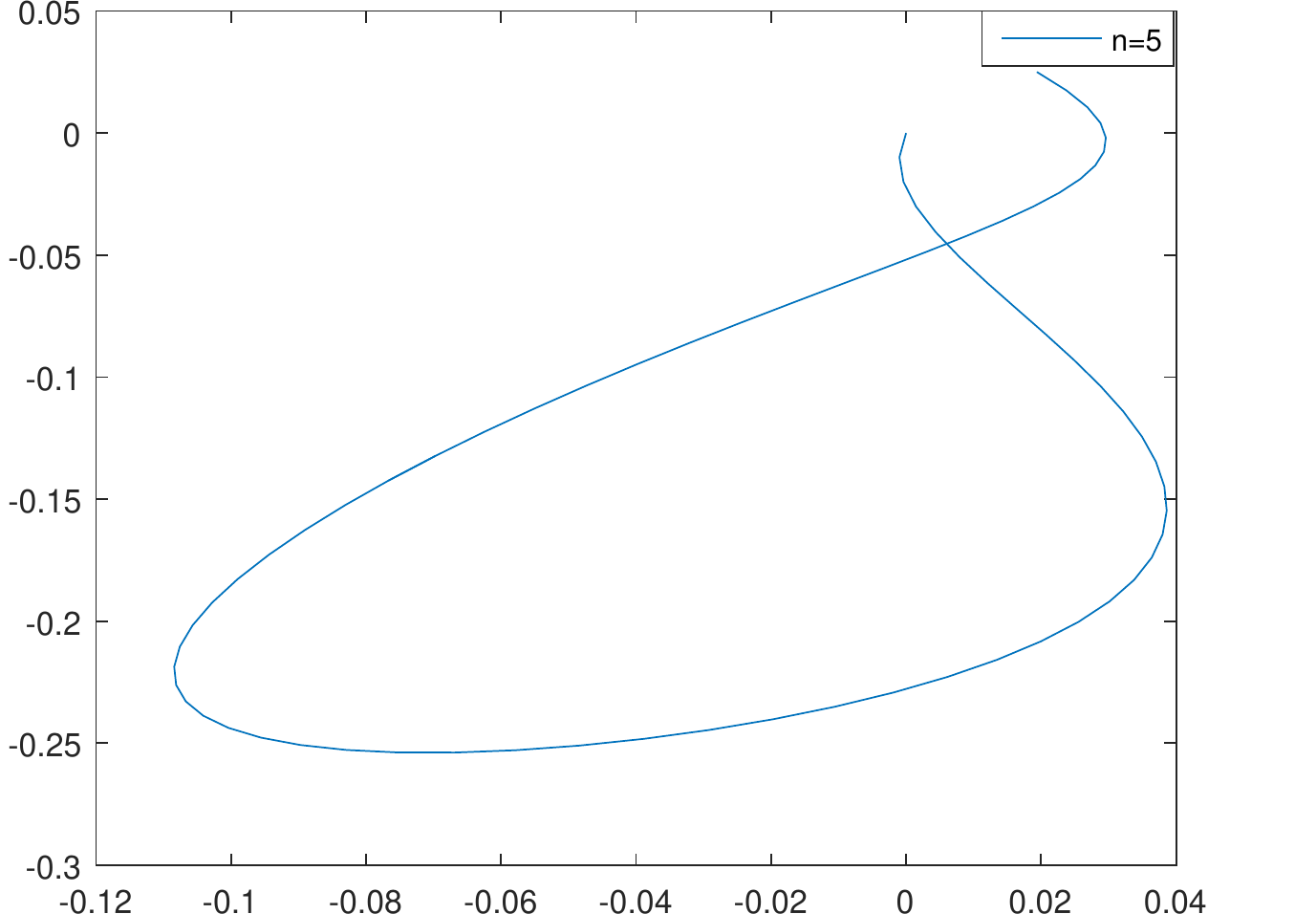}
\caption{Reconstruction using signature level 5}
\label{eight5}
\end{subfigure}%
\begin{subfigure}{.5\textwidth}
\centering
\includegraphics[trim={4cm 10cm 3cm 10cm},clip,width=0.9\linewidth]{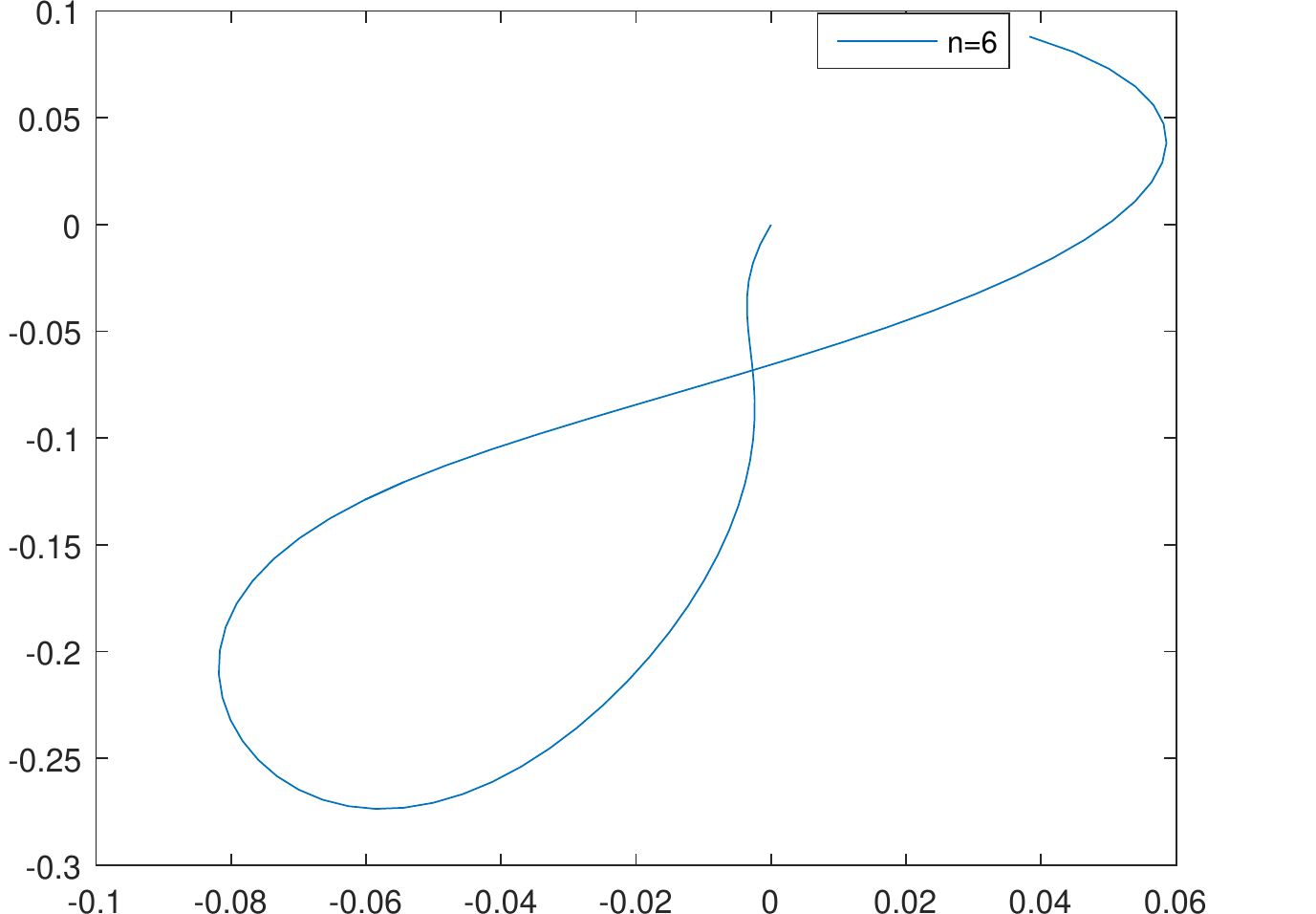}
\caption{Reconstruction using signature level 6}
\label{eight6}
\end{subfigure}

\begin{subfigure}{.5\textwidth}
\centering
\includegraphics[trim={4cm 10cm 3cm 10cm},clip,width=0.9\linewidth]{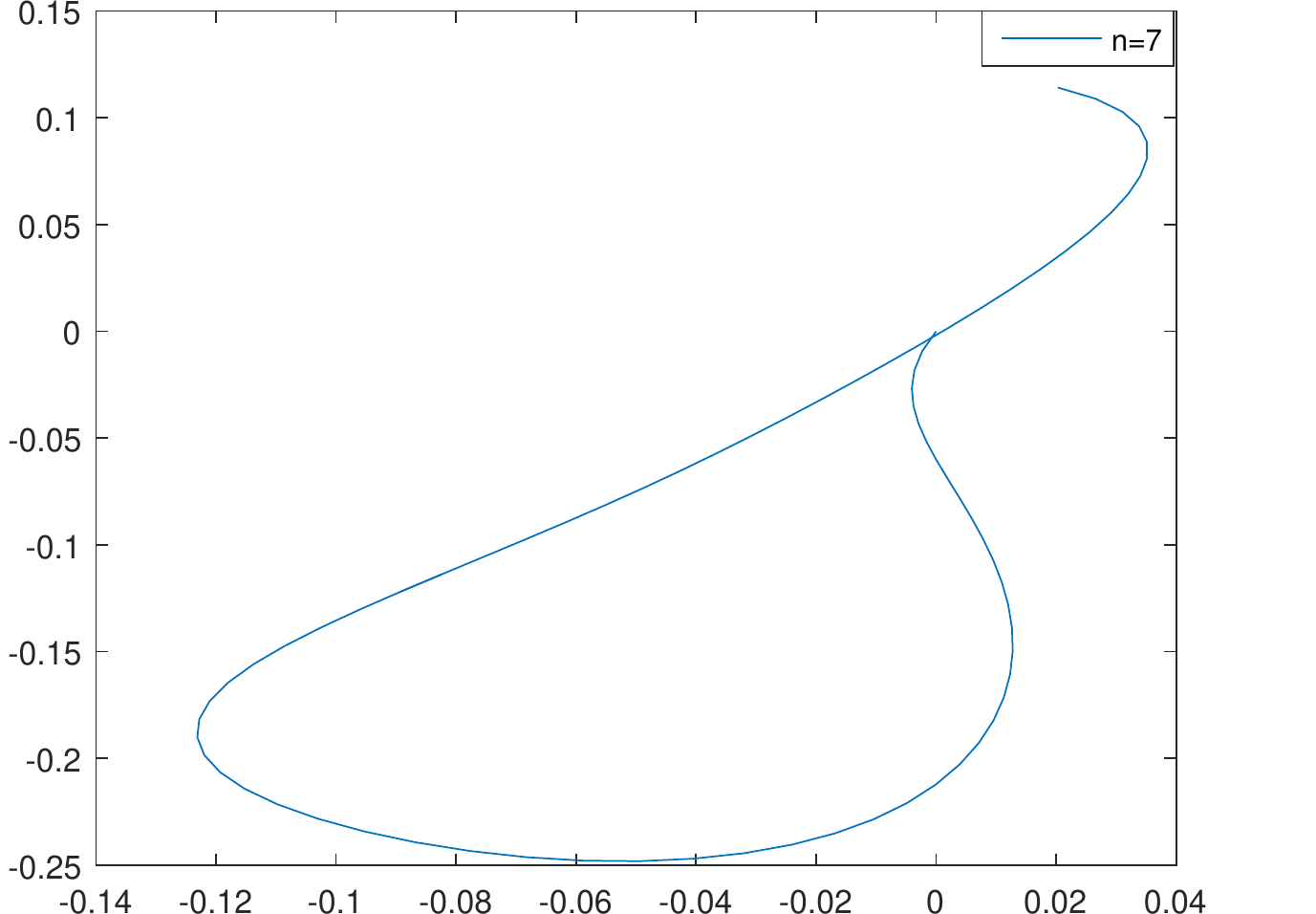}
\caption{Reconstruction using signature level 7}
\label{eight7}
\end{subfigure}%
\begin{subfigure}{.5\textwidth}
\centering
\includegraphics[trim={4cm 10cm 3cm 10cm},clip,width=0.9\linewidth]{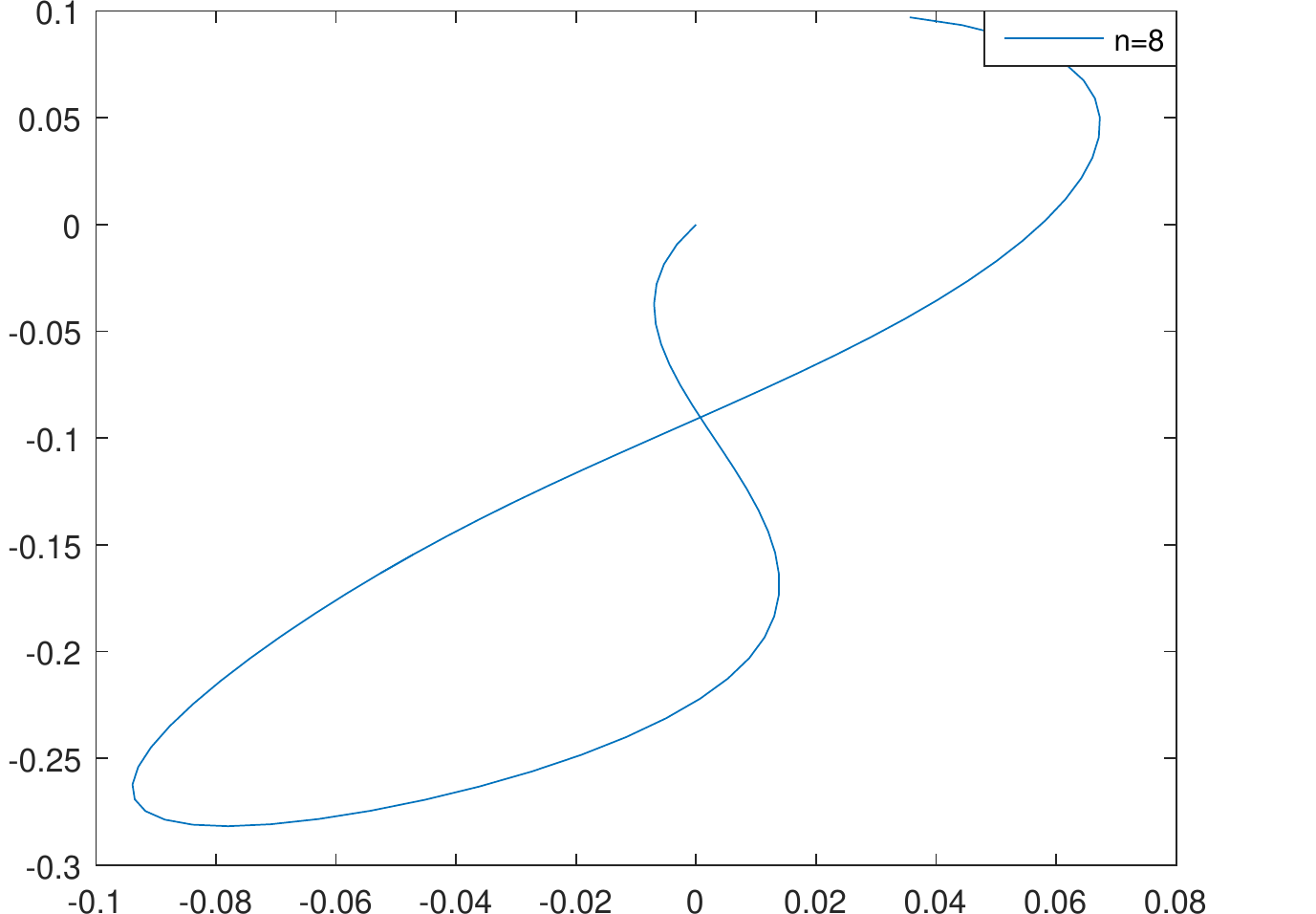}
\caption{Reconstruction using signature level 8}
\label{eight8}
\end{subfigure}

\begin{subfigure}{.5\textwidth}
\centering
\includegraphics[trim={4cm 10cm 3cm 10cm},clip,width=0.9\linewidth]{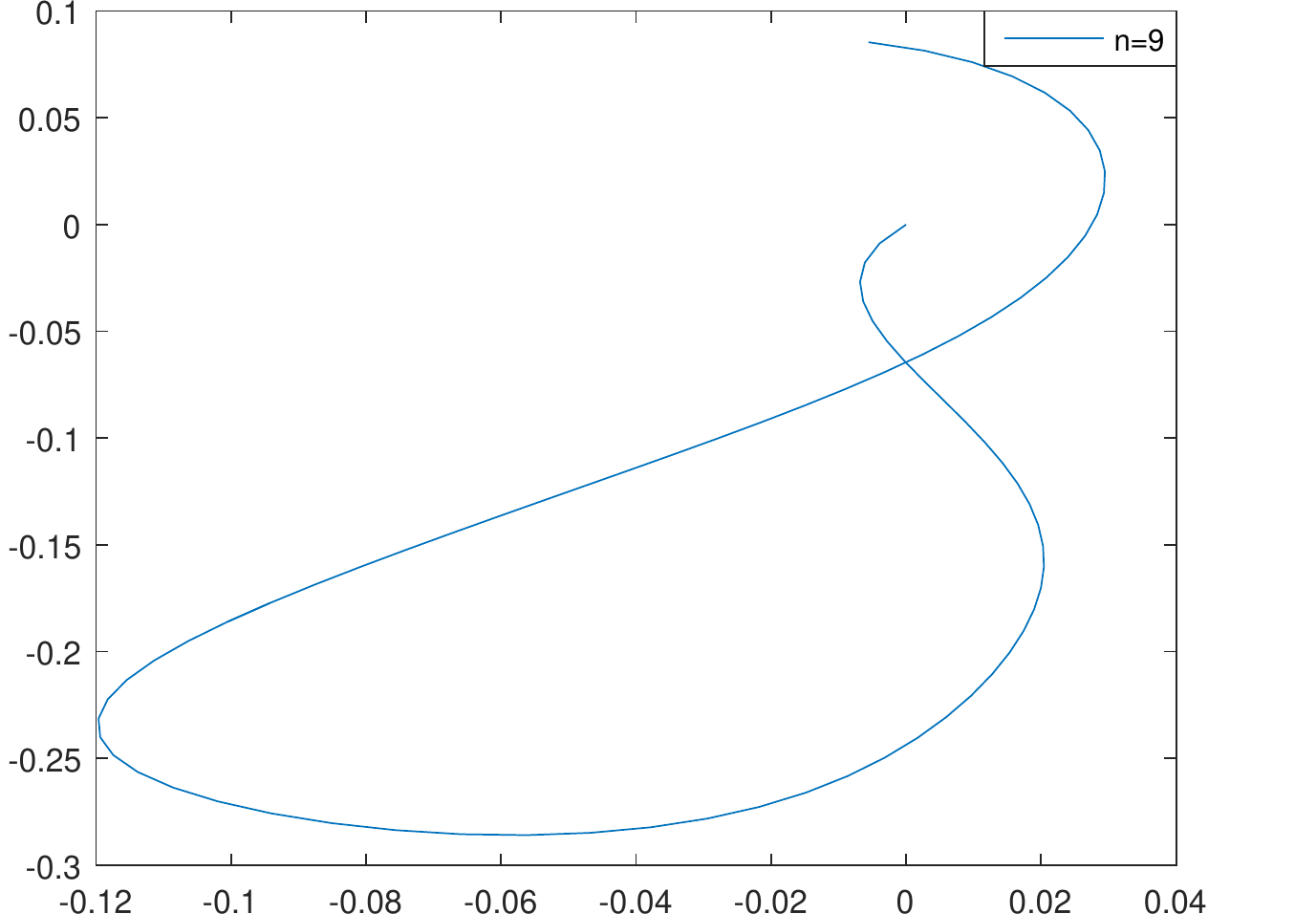}
\caption{Reconstruction using signature level 9}
\label{eight9}
\end{subfigure}%
\begin{subfigure}{.5\textwidth}
\centering
\includegraphics[trim={4cm 10cm 3cm 10cm},clip,width=0.9\linewidth]{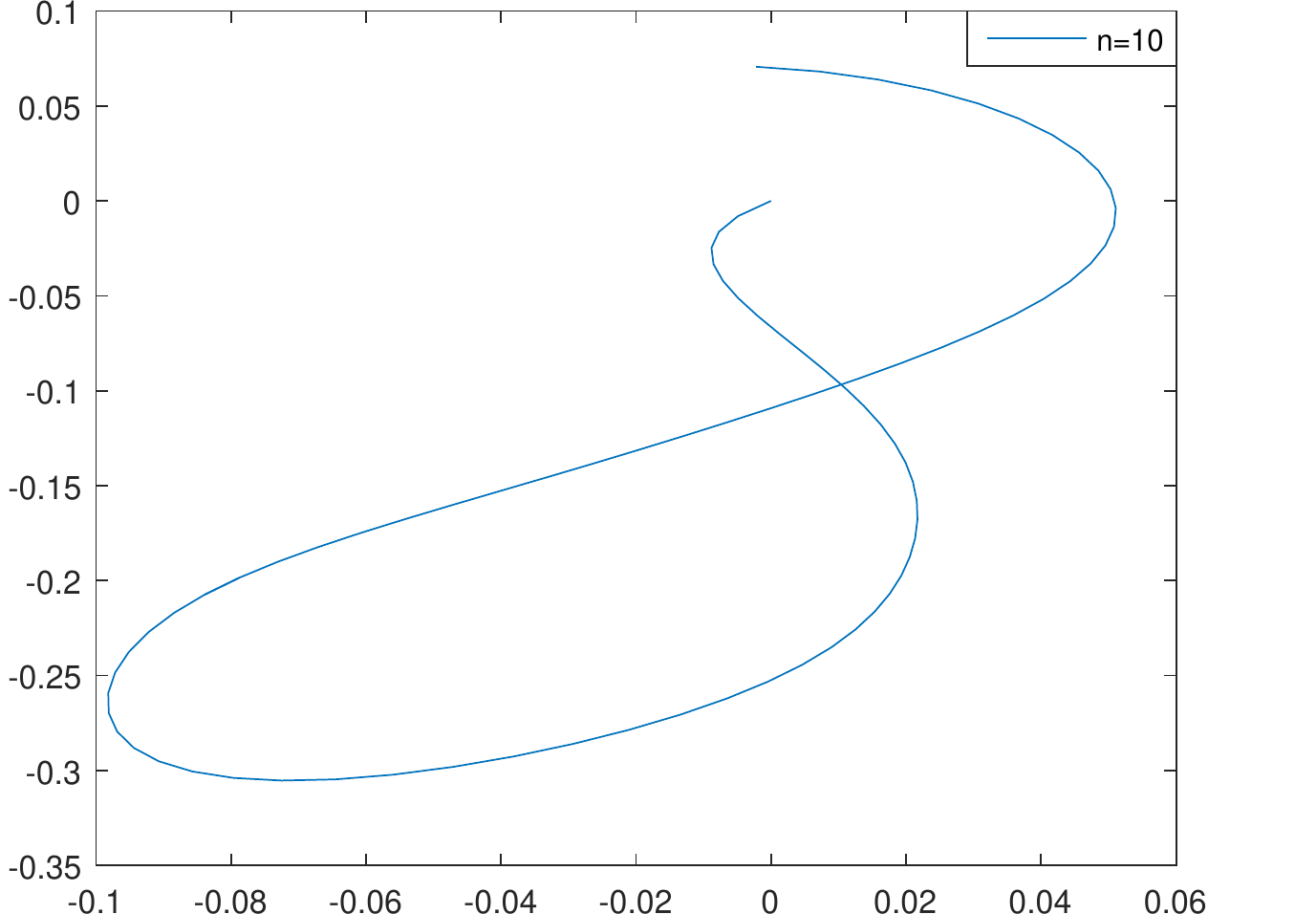}
\caption{Reconstruction using signature level 10}
\label{eight10}
\end{subfigure}
\caption{Reconstruction of the digit `8' using the insertion method}
\end{figure}

\begin{eg}[Robustness of the insertion method]
In this example, we show that we can build a pipeline to invert the signatures of paths by the insertion method. We arbitrarily choose 20 samples from the training set (consisting of handwritten digits by 30 writers) of the Pen-Based Recognition of Handwritten Digits Data Set \cite{Dua:2017} and normalise the data as described in Example \ref{digit8egch5}. Then we reconstruct the underlying path using signature level $9$ and $10$ using the method of Lagrange multipliers as described in Proposition \ref{l2uniqueness} and Corollary \ref{l2uniquenessnotunitspeed}, and obtain Figure \ref{robustdataset-1}, \ref{robustdataset-2}, \ref{robustdataset-3}, \ref{robustdataset-4} and \ref{robustdataset-5}. Note we export the derivatives computed in C++ into MATLAB, and use the splines to approximate the derivatives, and then unlike Example \ref{digit8egch5}, we integrate the splines over $[0,1]$. This is because signature level $9$ is relatively higher than most of the signature levels used in Example \ref{digit8egch5}, so the splines are supposed to behave better at extrapolation. We can see that the insertion method is in general quite robust, however it may not be able to give an accurate approximation at the corner of the path.
\end{eg}

\begin{figure}[t]
\begin{subfigure}{.5\textwidth}
\centering
\includegraphics[trim={4cm 10cm 3cm 10cm},clip,width=0.9\linewidth]{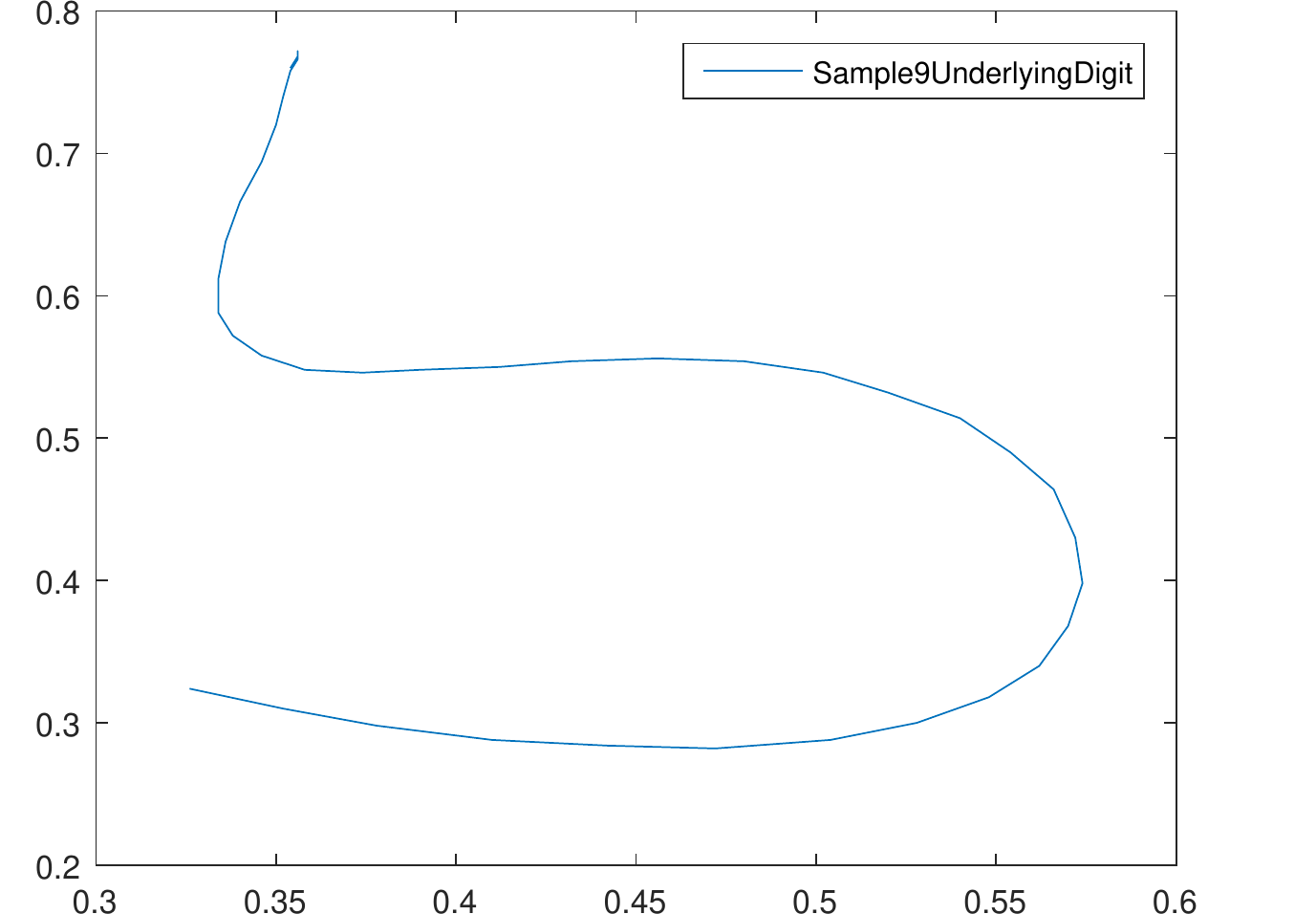}
\caption{Sample 9, underlying digit}
\end{subfigure}%
\begin{subfigure}{.5\textwidth}
\centering
\includegraphics[trim={4cm 10cm 3cm 10cm},clip,width=0.9\linewidth]{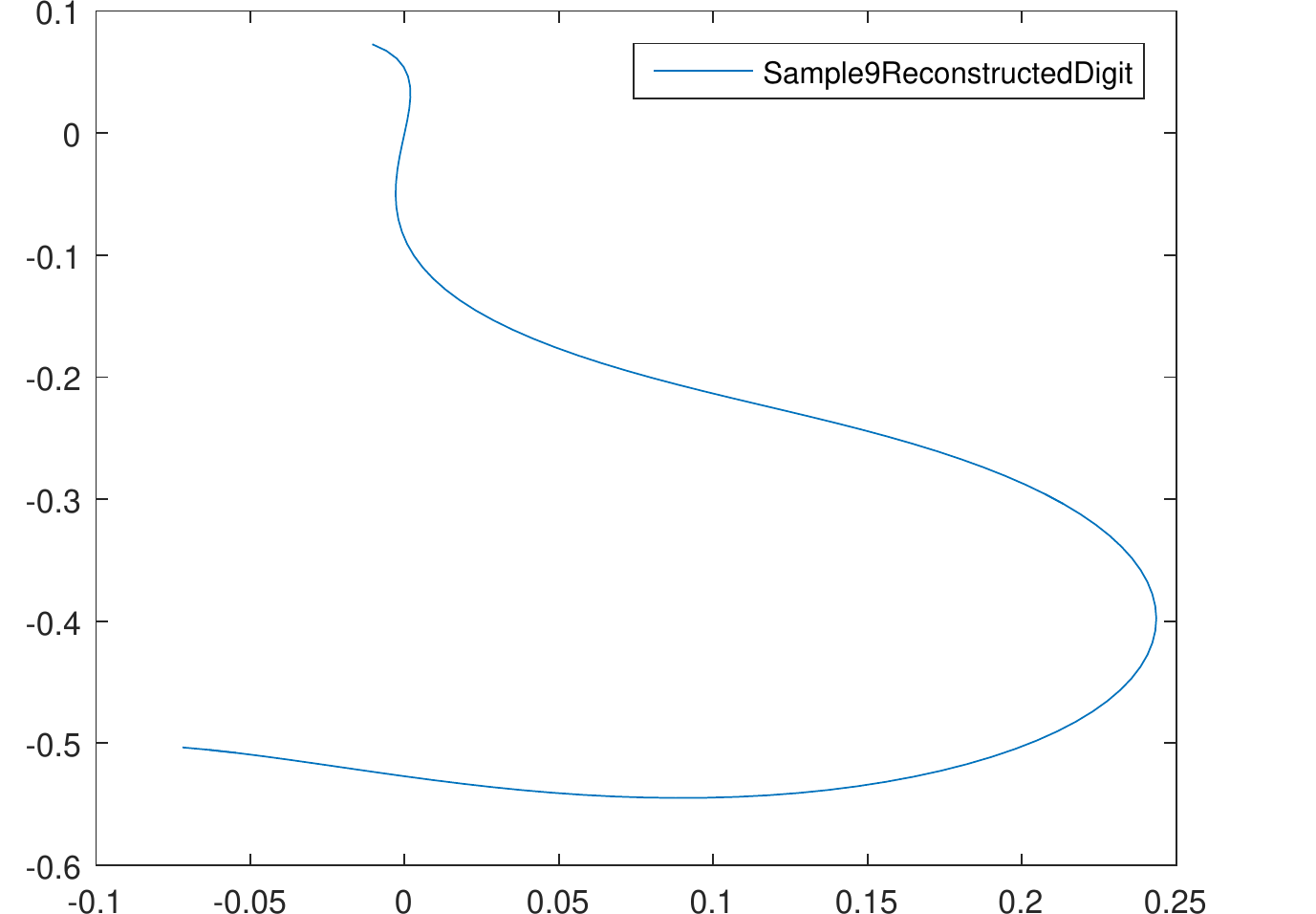}
\caption{Sample 9, reconstructed digit}
\end{subfigure}

\begin{subfigure}{.5\textwidth}
\centering
\includegraphics[trim={4cm 10cm 3cm 10cm},clip,width=0.9\linewidth]{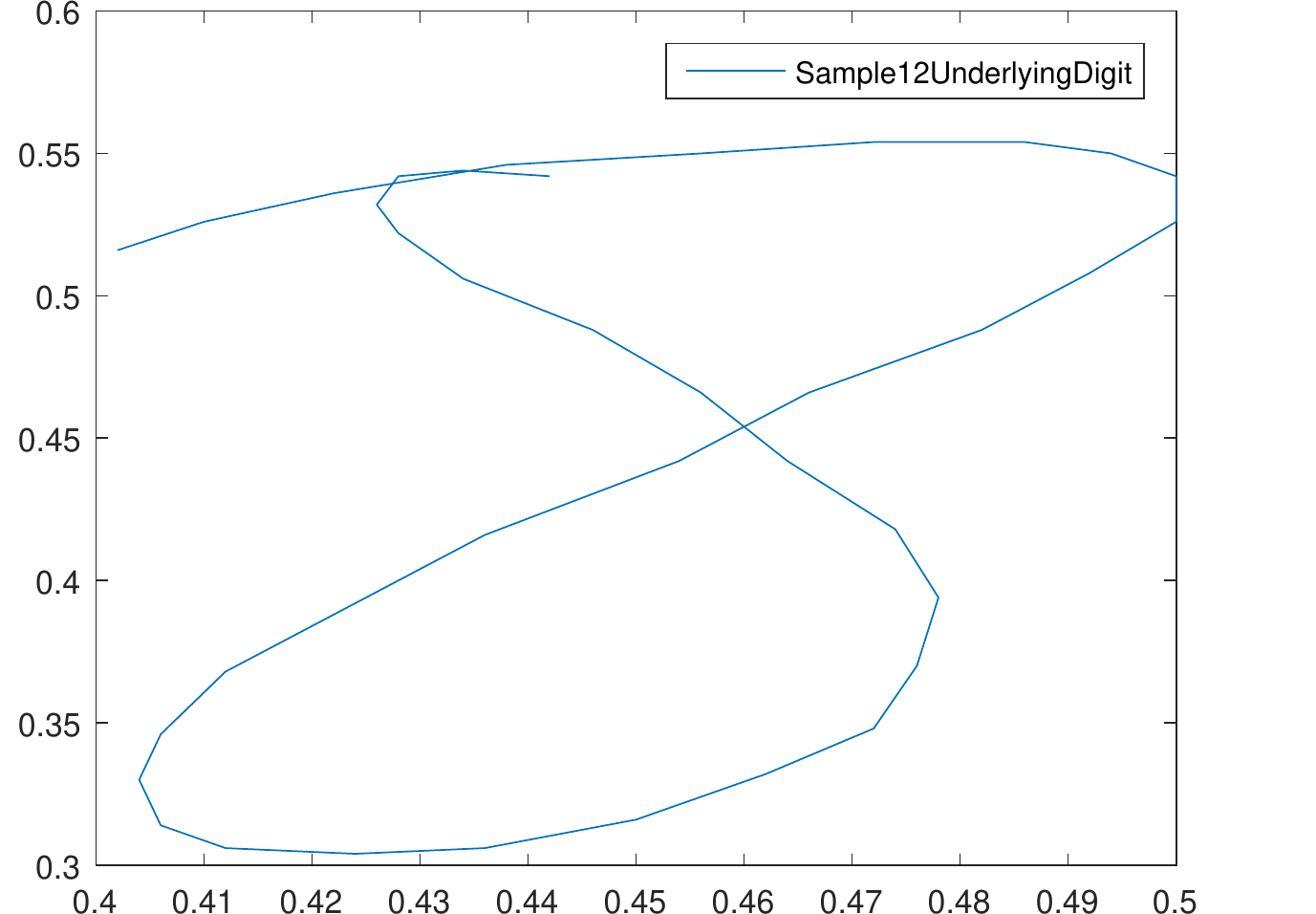}
\caption{Sample 12, underlying digit}
\end{subfigure}%
\begin{subfigure}{.5\textwidth}
\centering
\includegraphics[trim={4cm 10cm 3cm 10cm},clip,width=0.9\linewidth]{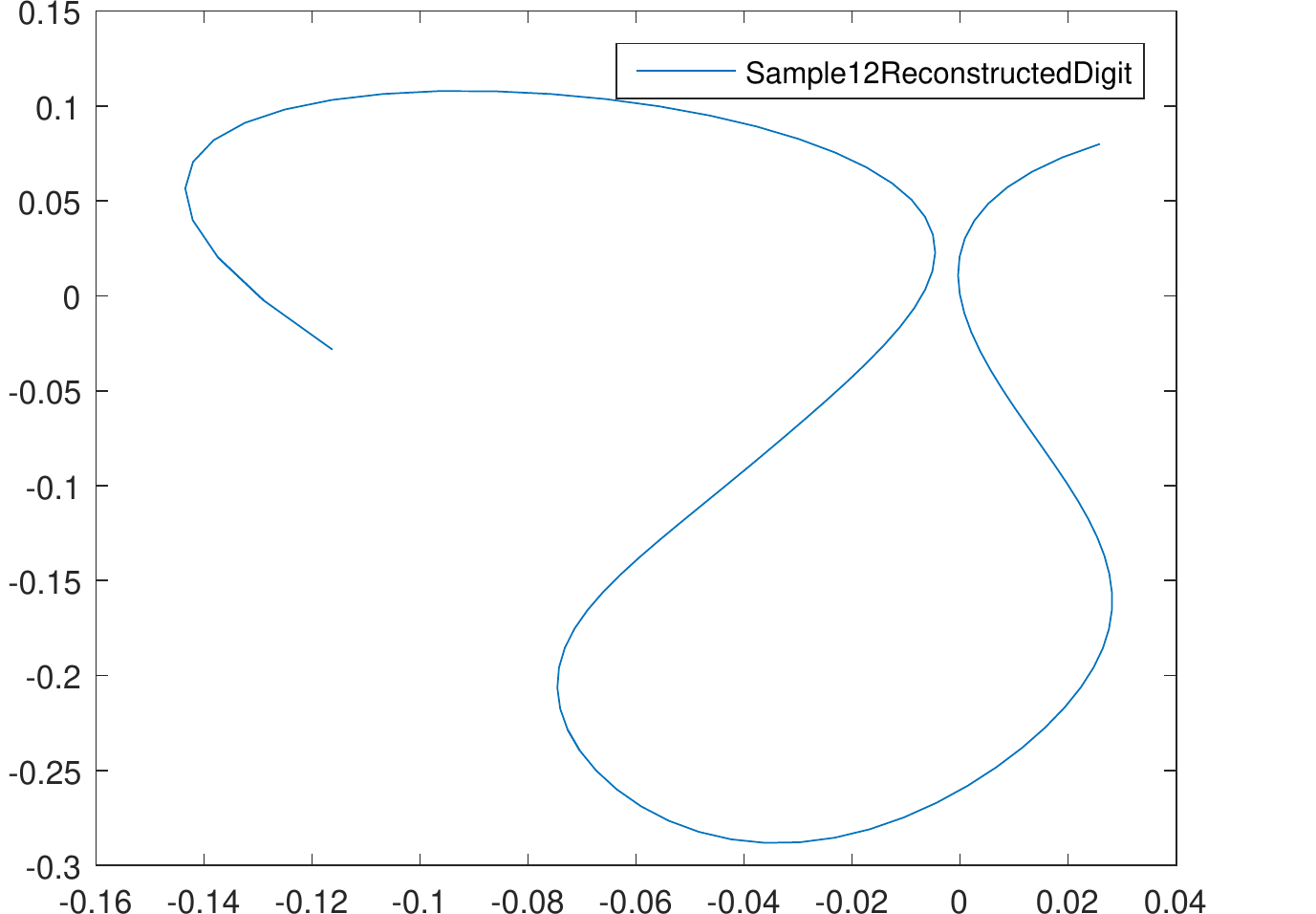}
\caption{Sample 12, reconstructed digit}
\end{subfigure}

\begin{subfigure}{.5\textwidth}
\centering
\includegraphics[trim={4cm 10cm 3cm 10cm},clip,width=0.9\linewidth]{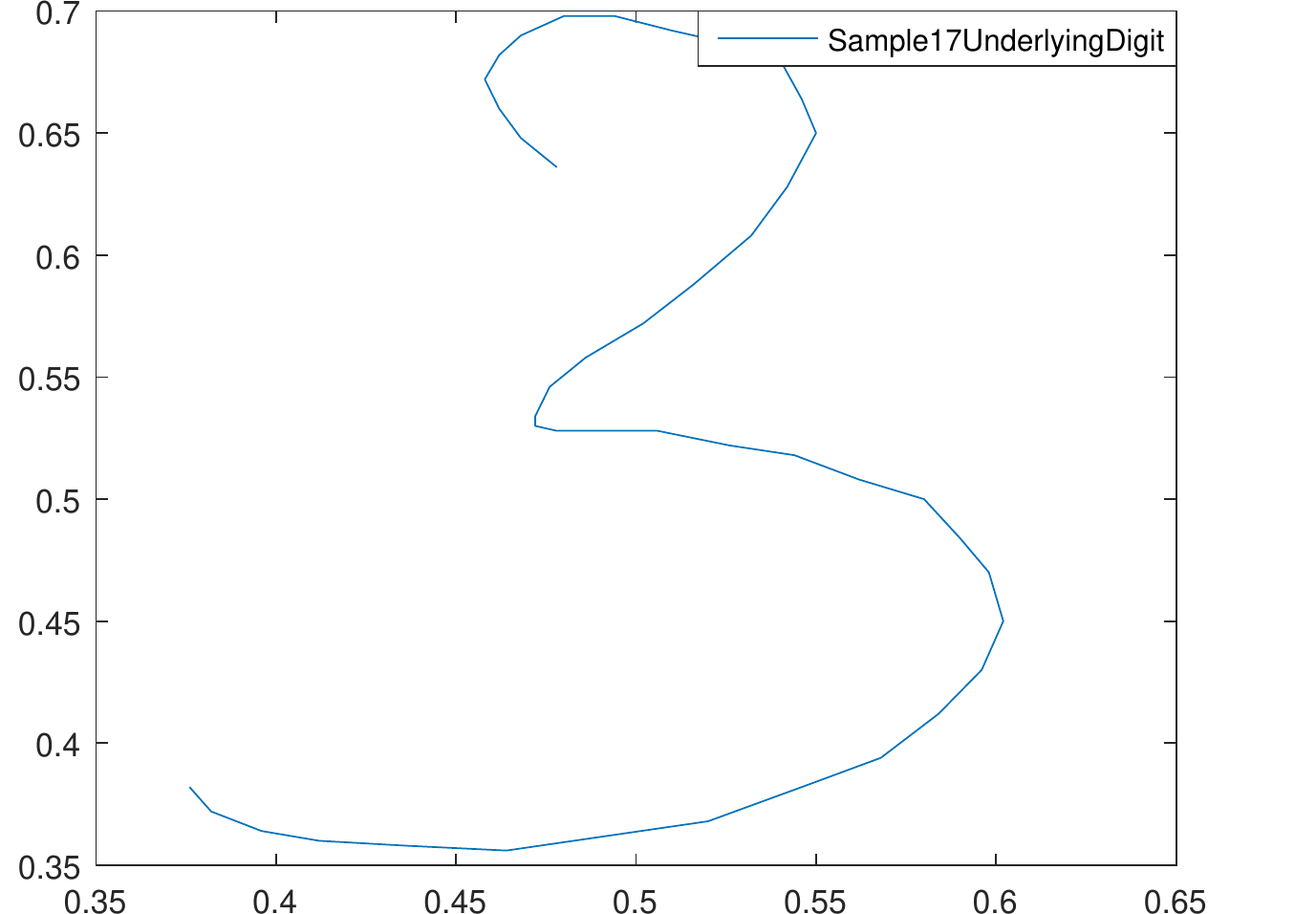}
\caption{Sample 17, underlying digit}
\end{subfigure}%
\begin{subfigure}{.5\textwidth}
\centering
\includegraphics[trim={4cm 10cm 3cm 10cm},clip,width=0.9\linewidth]{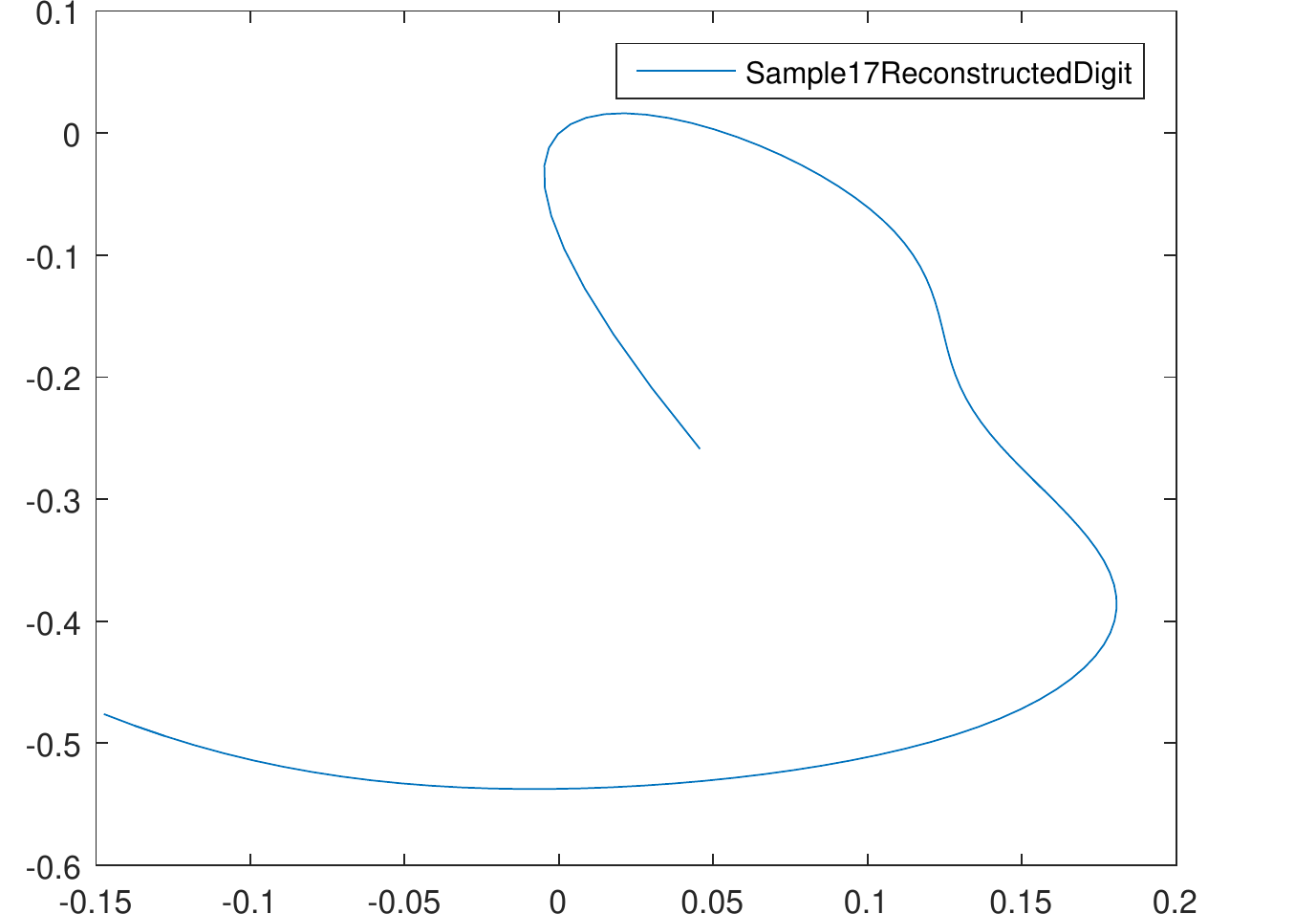}
\caption{Sample 17, reconstructed digit}
\end{subfigure}

\begin{subfigure}{.5\textwidth}
\centering 
\includegraphics[trim={4cm 10cm 3cm 10cm},clip,width=0.9\linewidth]{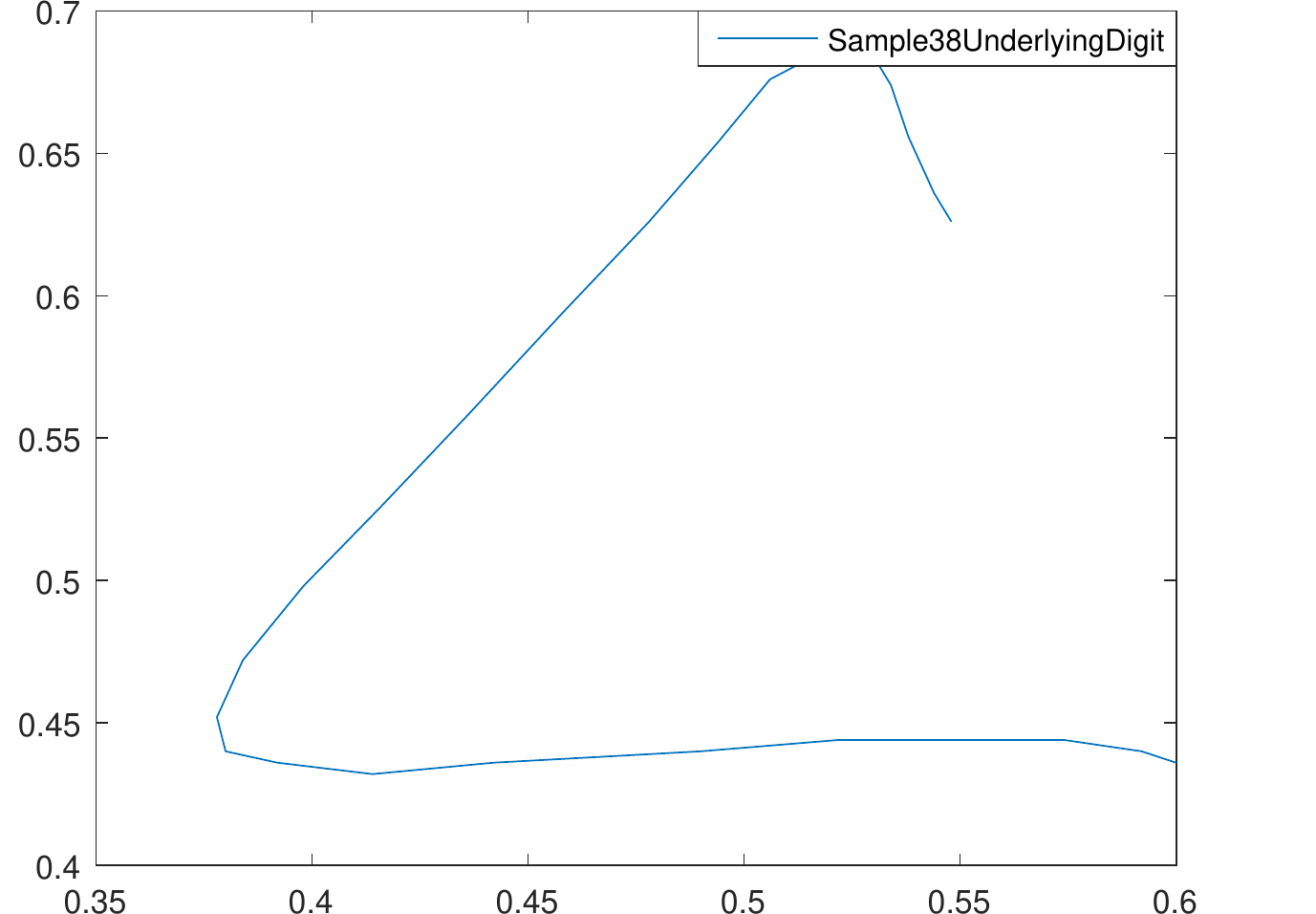}
\caption{Sample 38, underlying digit}
\end{subfigure}%
\begin{subfigure}{.5\textwidth}
\centering
\includegraphics[trim={4cm 10cm 3cm 10cm},clip,width=0.9\linewidth]{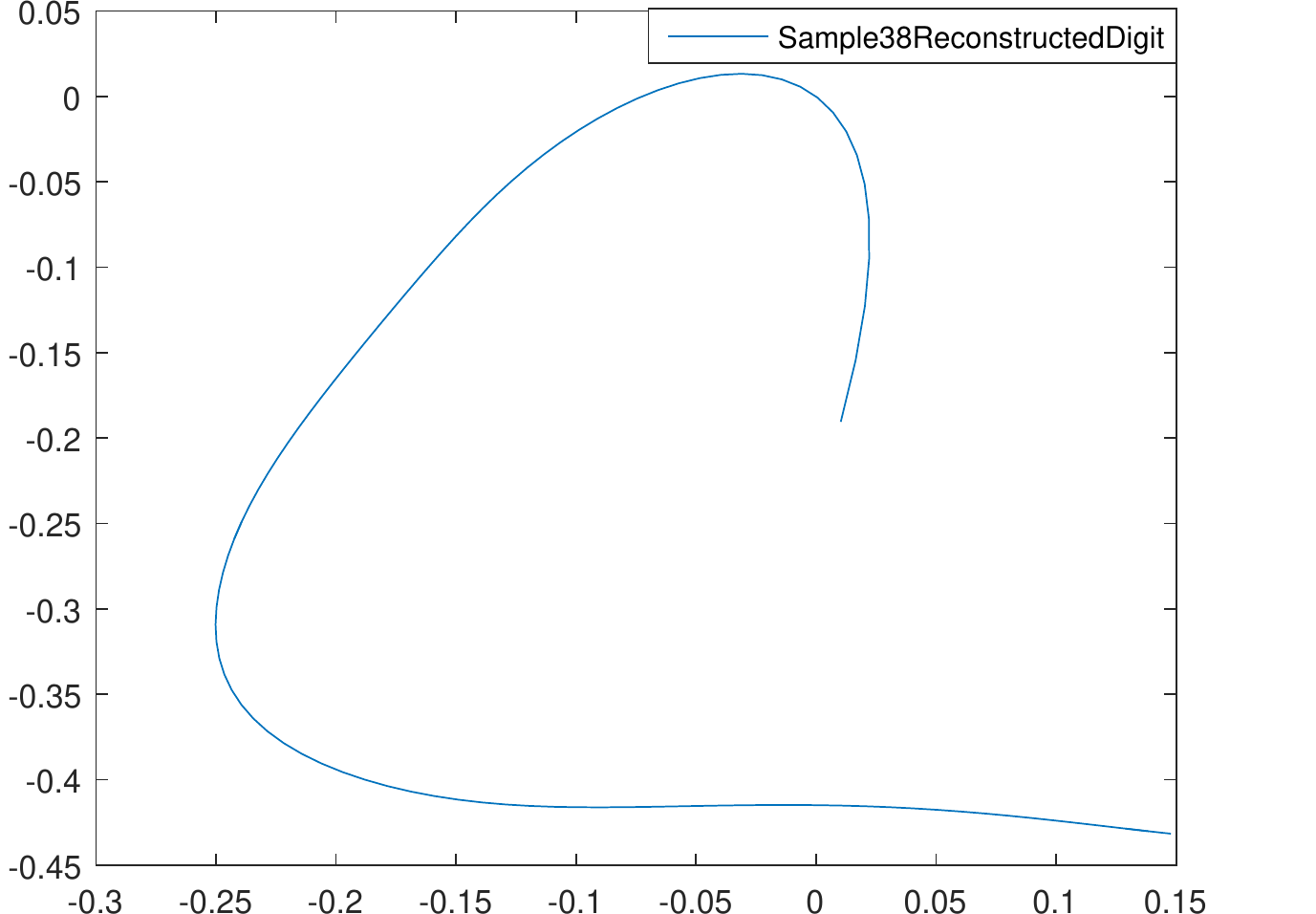}
\caption{Sample 38, reconstructed digit}
\end{subfigure}
\caption{Reconstruction of digits from the data set \cite{Dua:2017} using signature level $9$ and $10$}
\label{robustdataset-1}
\end{figure}
\clearpage

\begin{figure}[t]
\begin{subfigure}{.5\textwidth}
\centering
\includegraphics[trim={4cm 10cm 3cm 10cm},clip,width=0.9\linewidth]{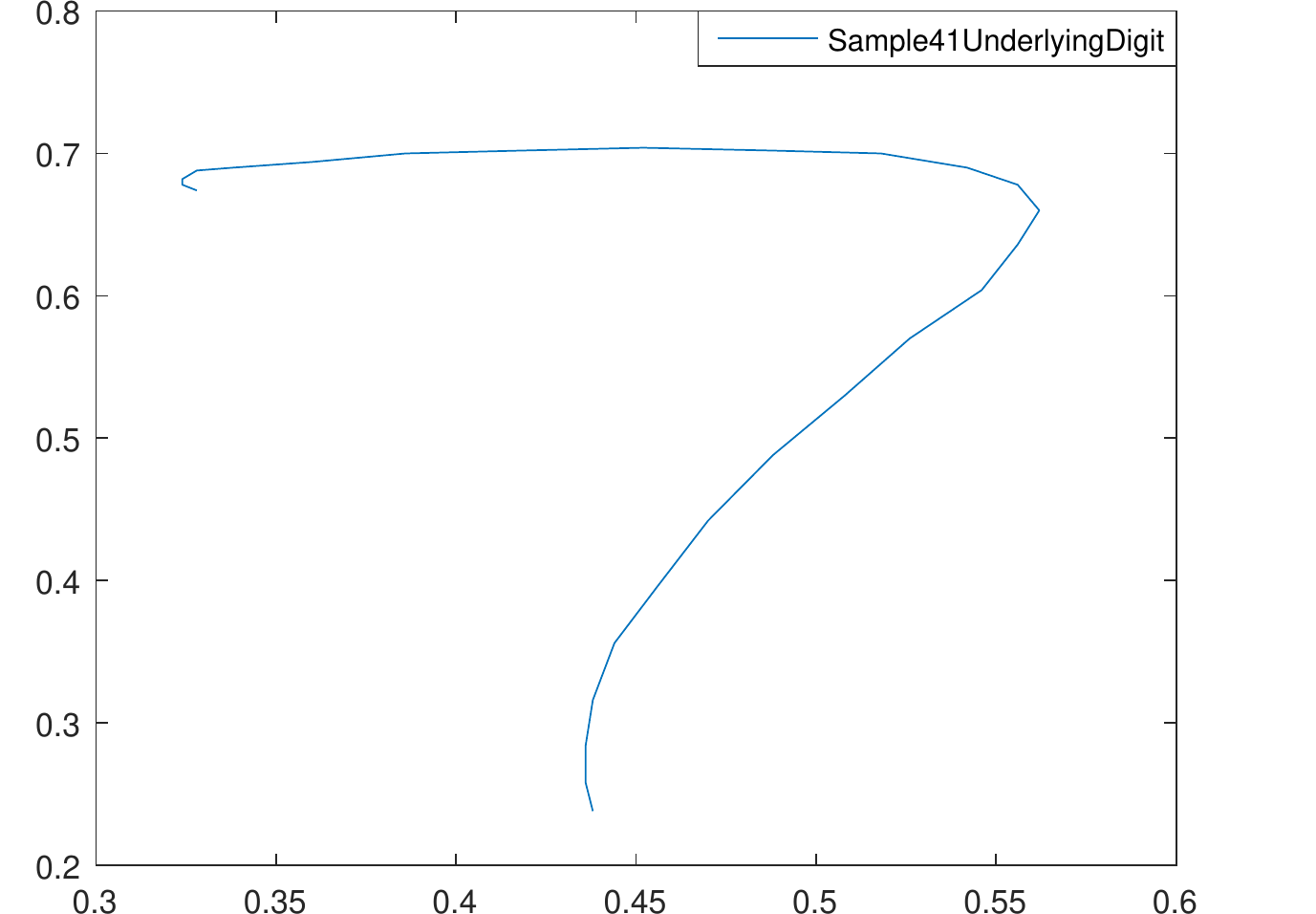}
\caption{Sample 41, underlying digit}
\end{subfigure}%
\begin{subfigure}{.5\textwidth}
\centering
\includegraphics[trim={4cm 10cm 3cm 10cm},clip,width=0.9\linewidth]{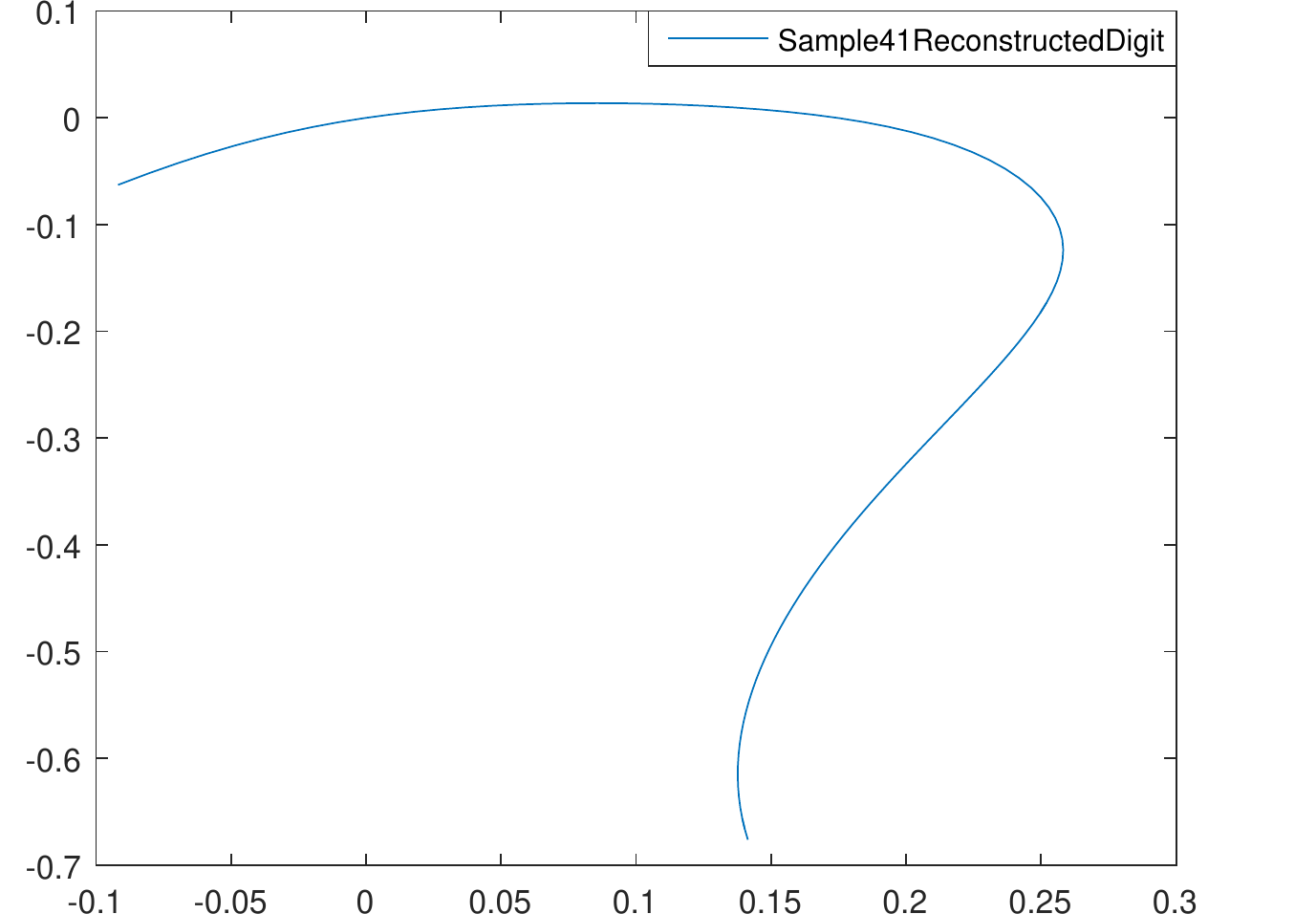}
\caption{Sample 41, reconstructed digit}
\end{subfigure}

\begin{subfigure}{.5\textwidth}
\centering
\includegraphics[trim={4cm 10cm 3cm 10cm},clip,width=0.9\linewidth]{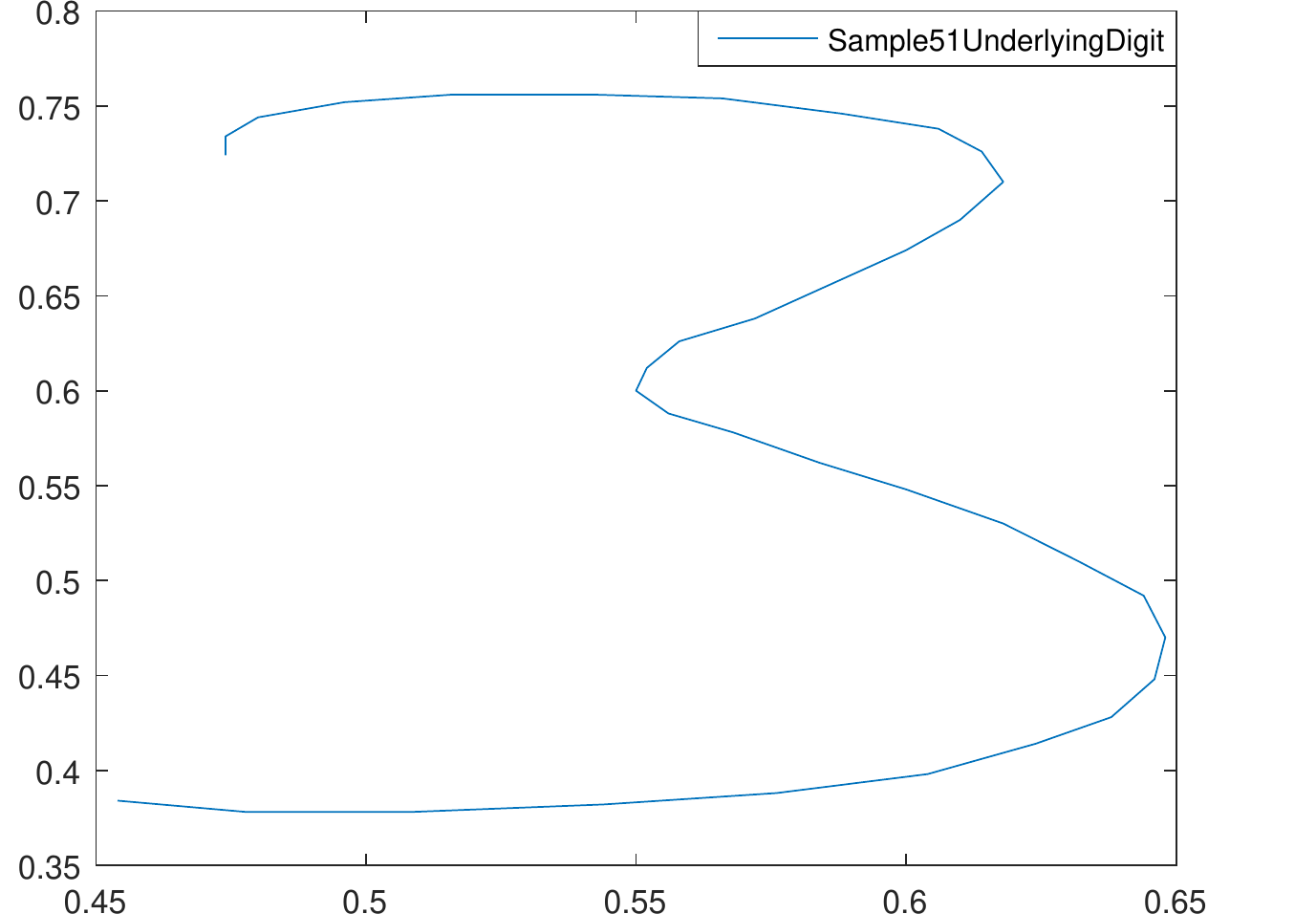}
\caption{Sample 51, underlying digit}
\end{subfigure}%
\begin{subfigure}{.5\textwidth}
\centering
\includegraphics[trim={4cm 10cm 3cm 10cm},clip,width=0.9\linewidth]{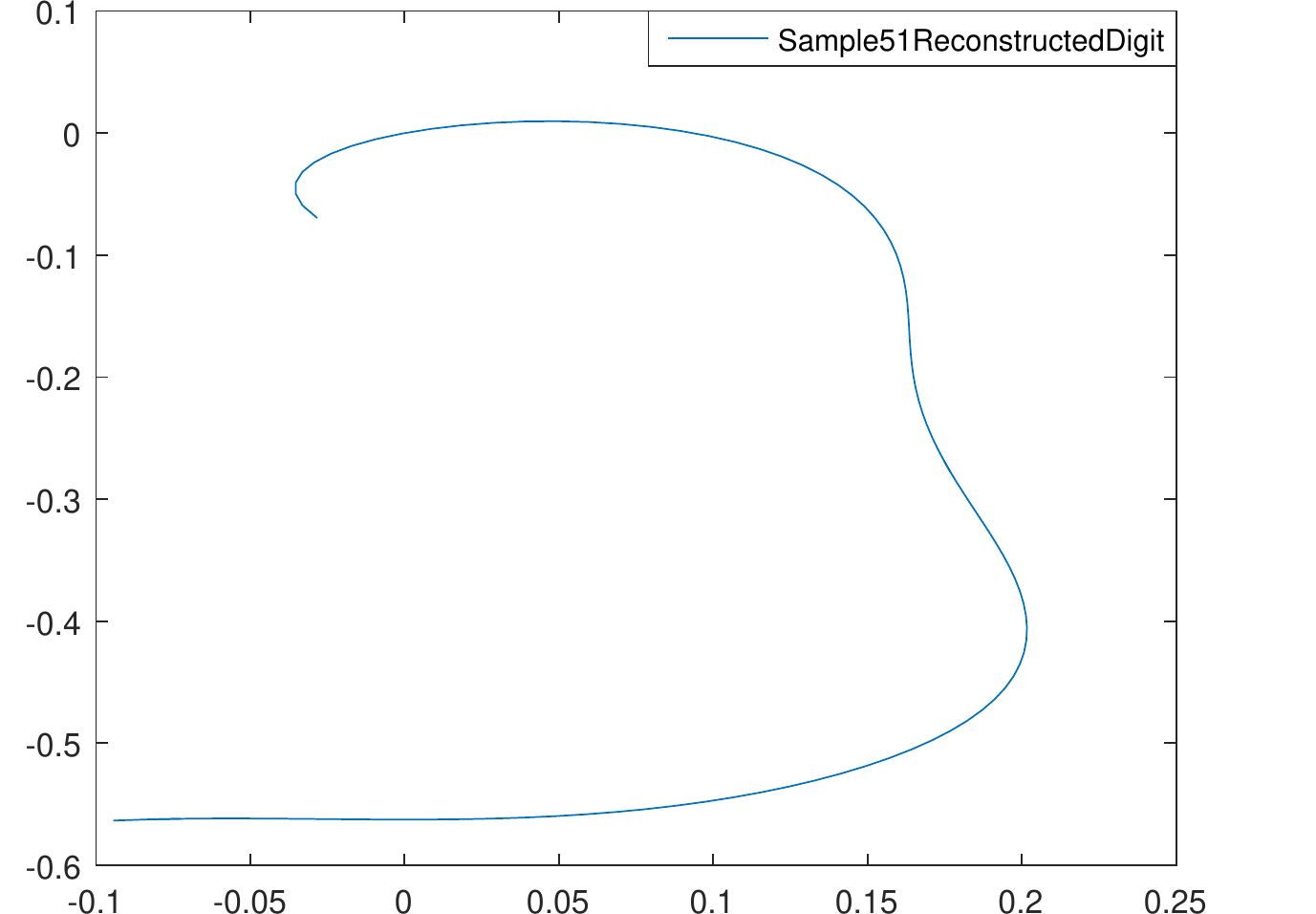}
\caption{Sample 51, reconstructed digit}
\end{subfigure}

\begin{subfigure}{.5\textwidth}
\centering
\includegraphics[trim={4cm 10cm 3cm 10cm},clip,width=0.9\linewidth]{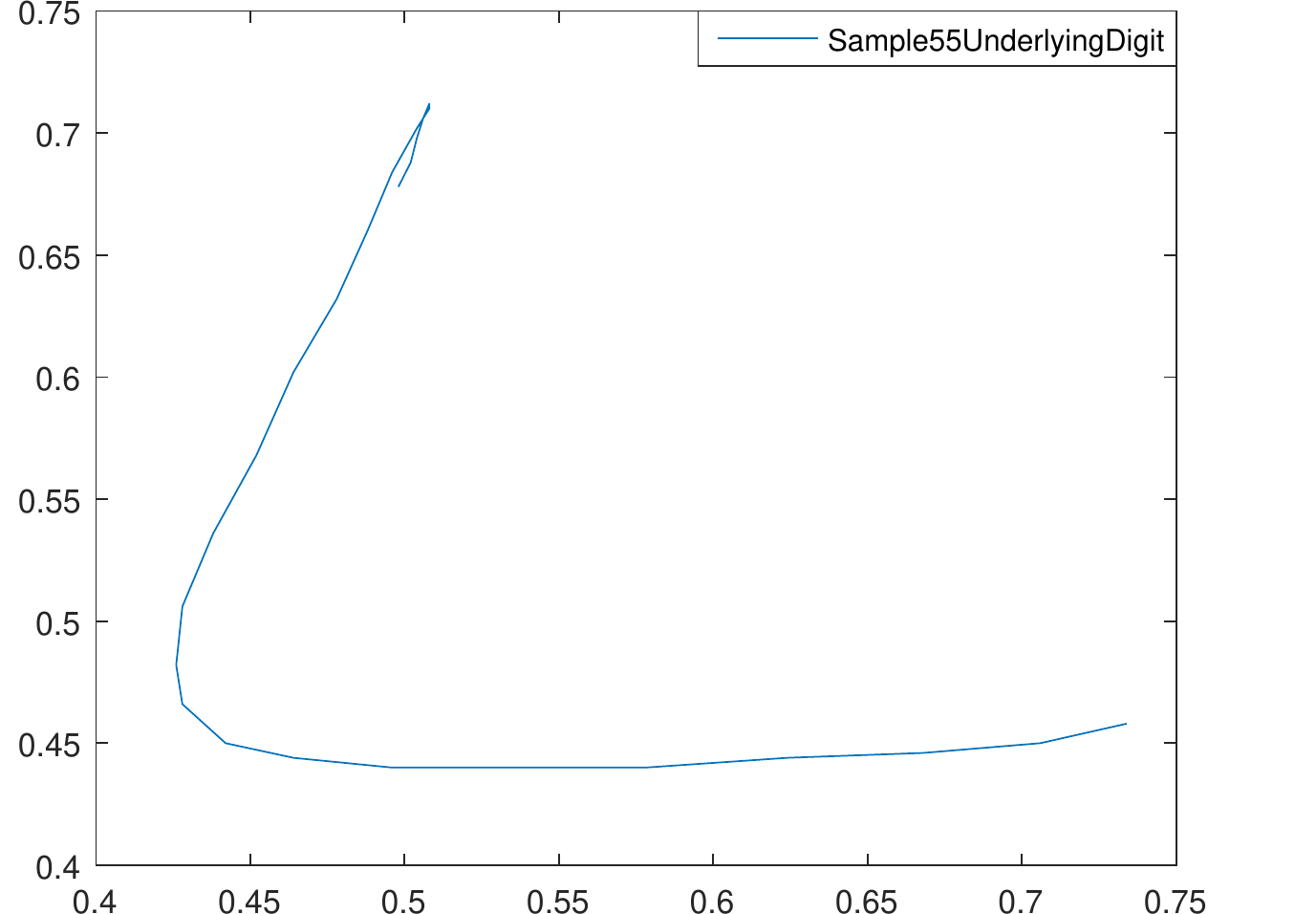}
\caption{Sample 55, underlying digit}
\end{subfigure}%
\begin{subfigure}{.5\textwidth}
\centering
\includegraphics[trim={4cm 10cm 3cm 10cm},clip,width=0.9\linewidth]{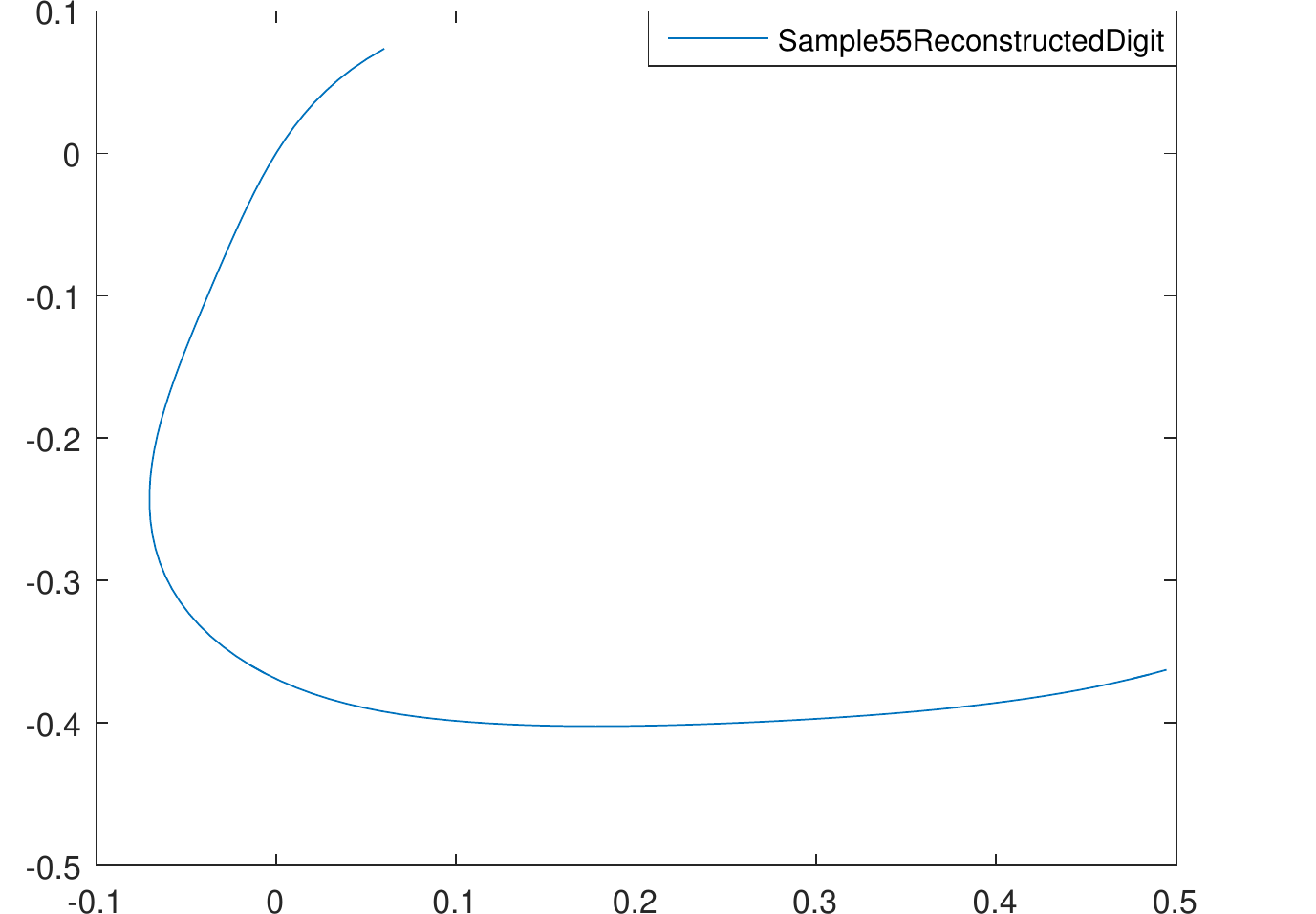}
\caption{Sample 55, reconstructed digit}
\end{subfigure}

\begin{subfigure}{.5\textwidth}
\centering
\includegraphics[trim={4cm 10cm 3cm 10cm},clip,width=0.9\linewidth]{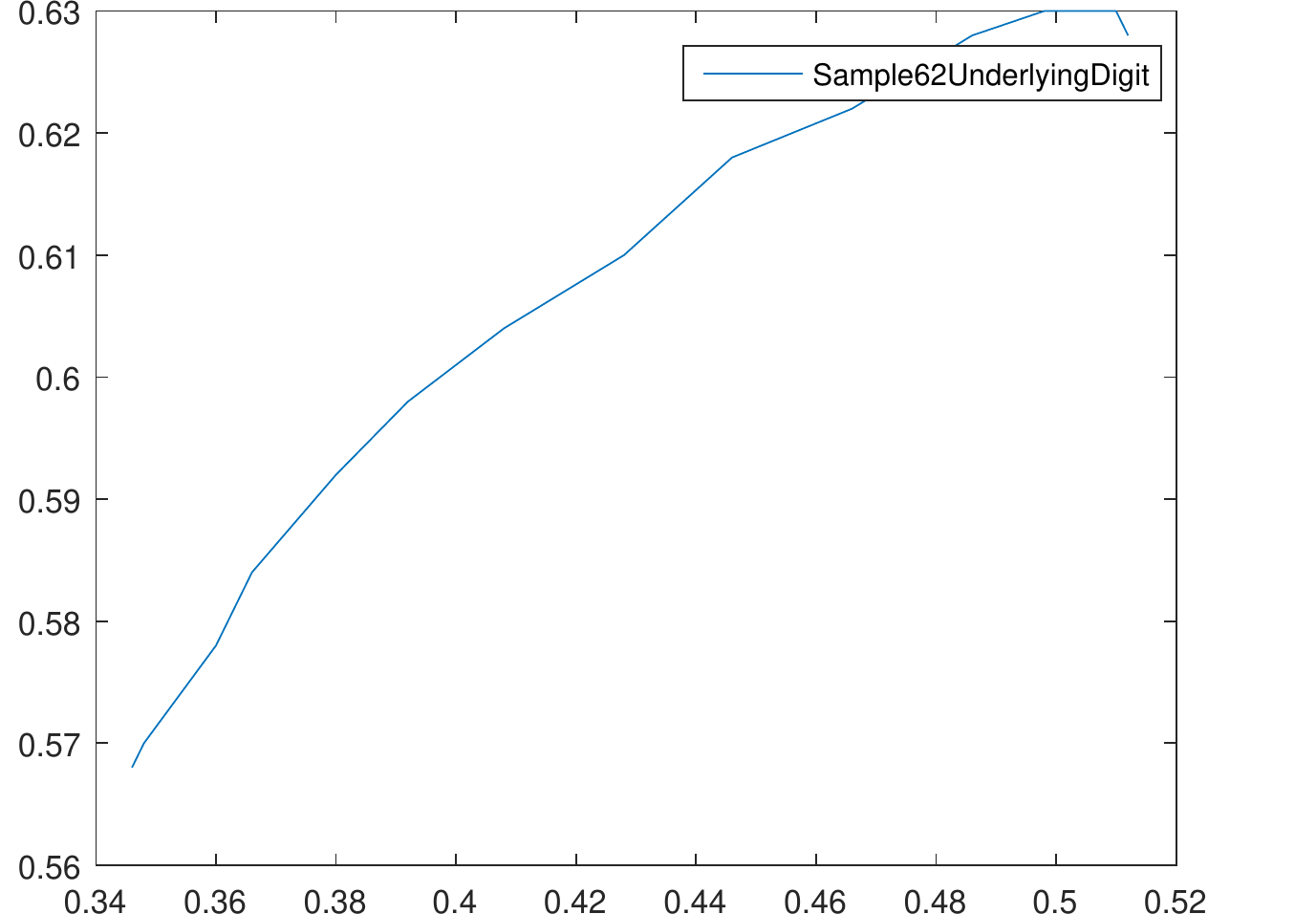}
\caption{Sample 62, underlying digit}
\end{subfigure}%
\begin{subfigure}{.5\textwidth}
\centering
\includegraphics[trim={4cm 10cm 3cm 10cm},clip,width=0.9\linewidth]{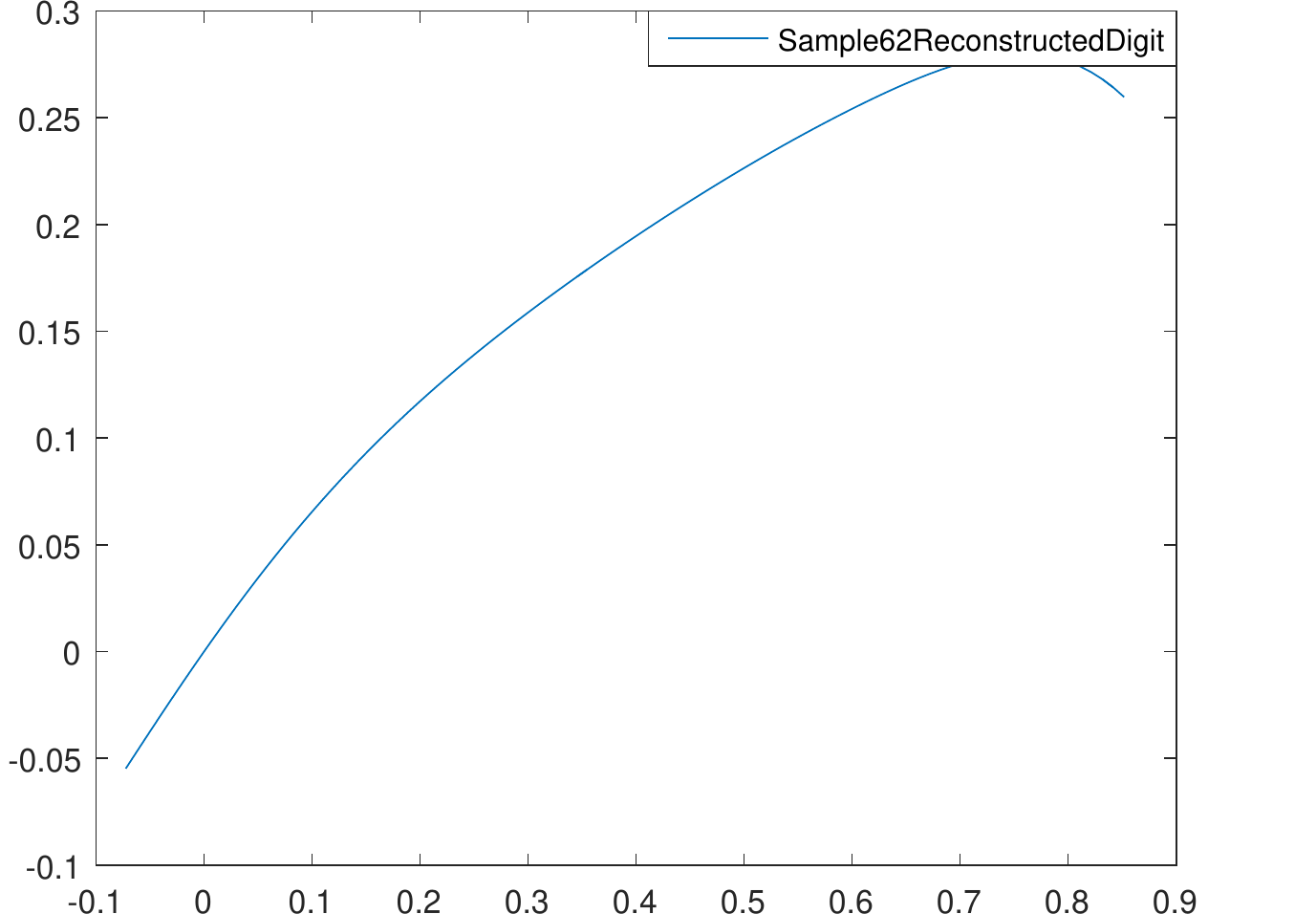}
\caption{Sample 62, reconstructed digit}
\end{subfigure}
\caption{Reconstruction of digits from the data set \cite{Dua:2017} using signature level $9$ and $10$}
\label{robustdataset-2}
\end{figure}
\clearpage

\begin{figure}[t]
\begin{subfigure}{.5\textwidth}
\centering
\includegraphics[trim={4cm 10cm 3cm 10cm},clip,width=0.9\linewidth]{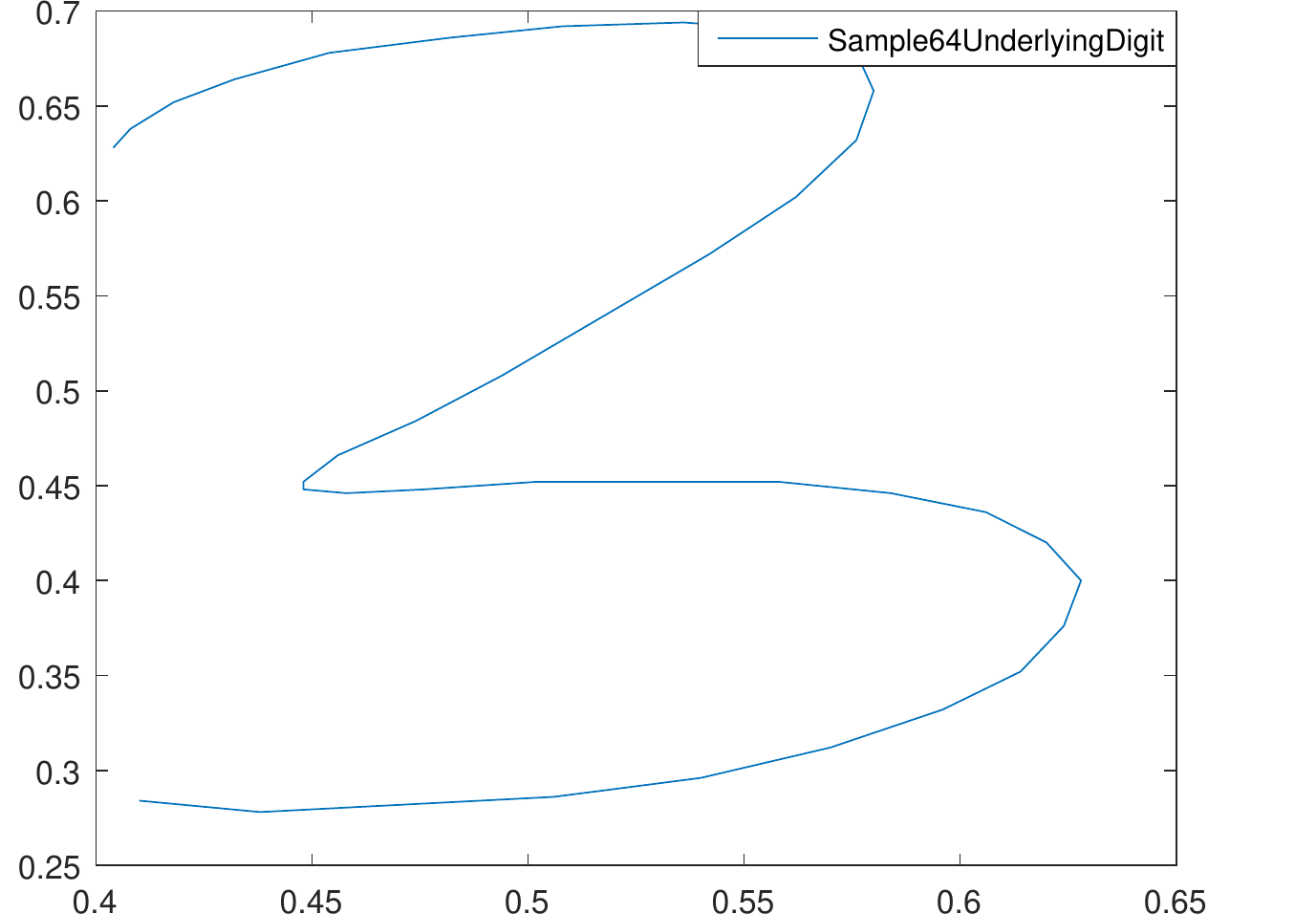}
\caption{Sample 64, underlying digit}
\end{subfigure}%
\begin{subfigure}{.5\textwidth}
\centering
\includegraphics[trim={4cm 10cm 3cm 10cm},clip,width=0.9\linewidth]{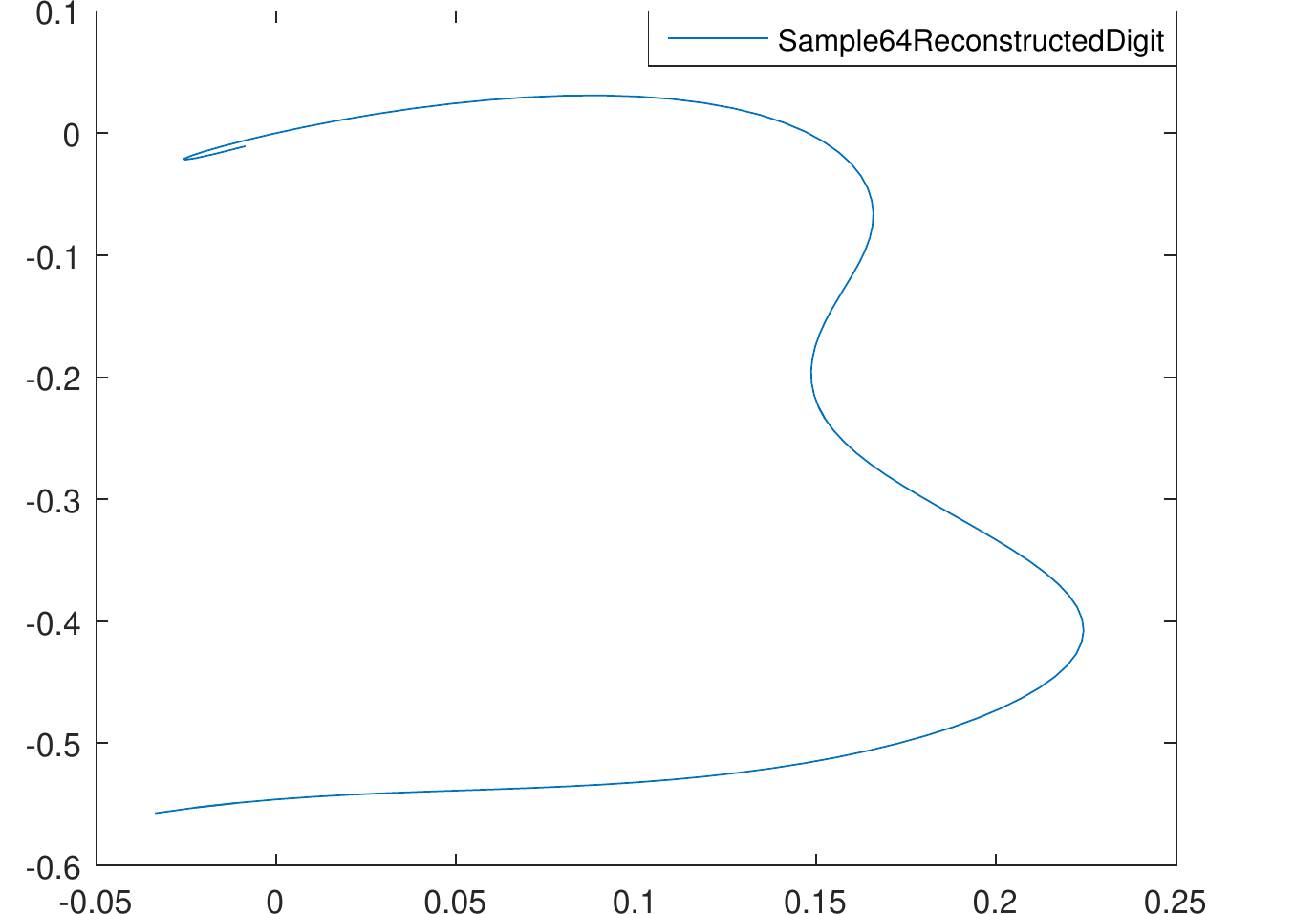}
\caption{Sample 64, reconstructed digit}
\end{subfigure}

\begin{subfigure}{.5\textwidth}
\centering
\includegraphics[trim={4cm 10cm 3cm 10cm},clip,width=0.9\linewidth]{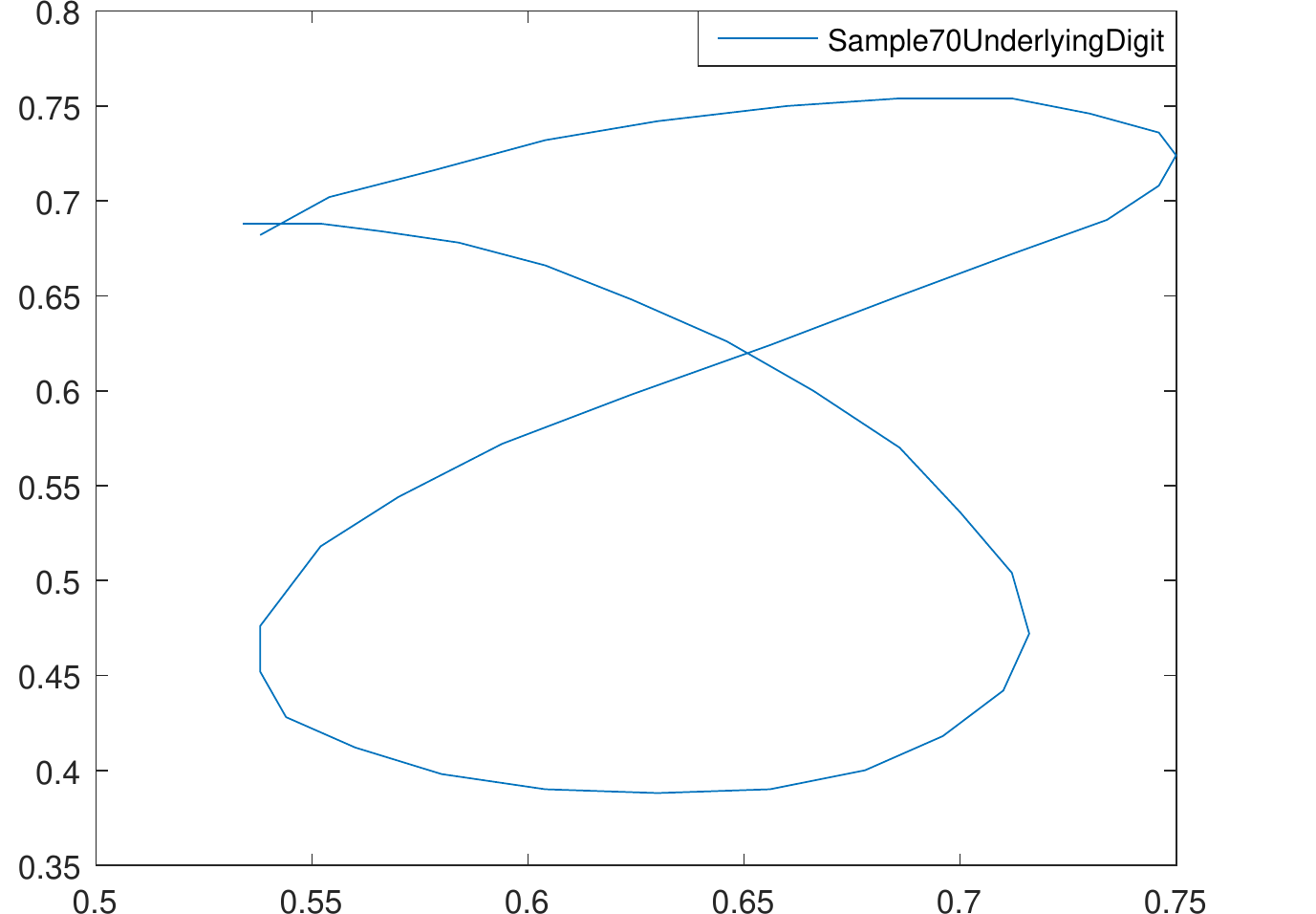}
\caption{Sample 70, underlying digit}
\end{subfigure}%
\begin{subfigure}{.5\textwidth}
\centering
\includegraphics[trim={4cm 10cm 3cm 10cm},clip,width=0.9\linewidth]{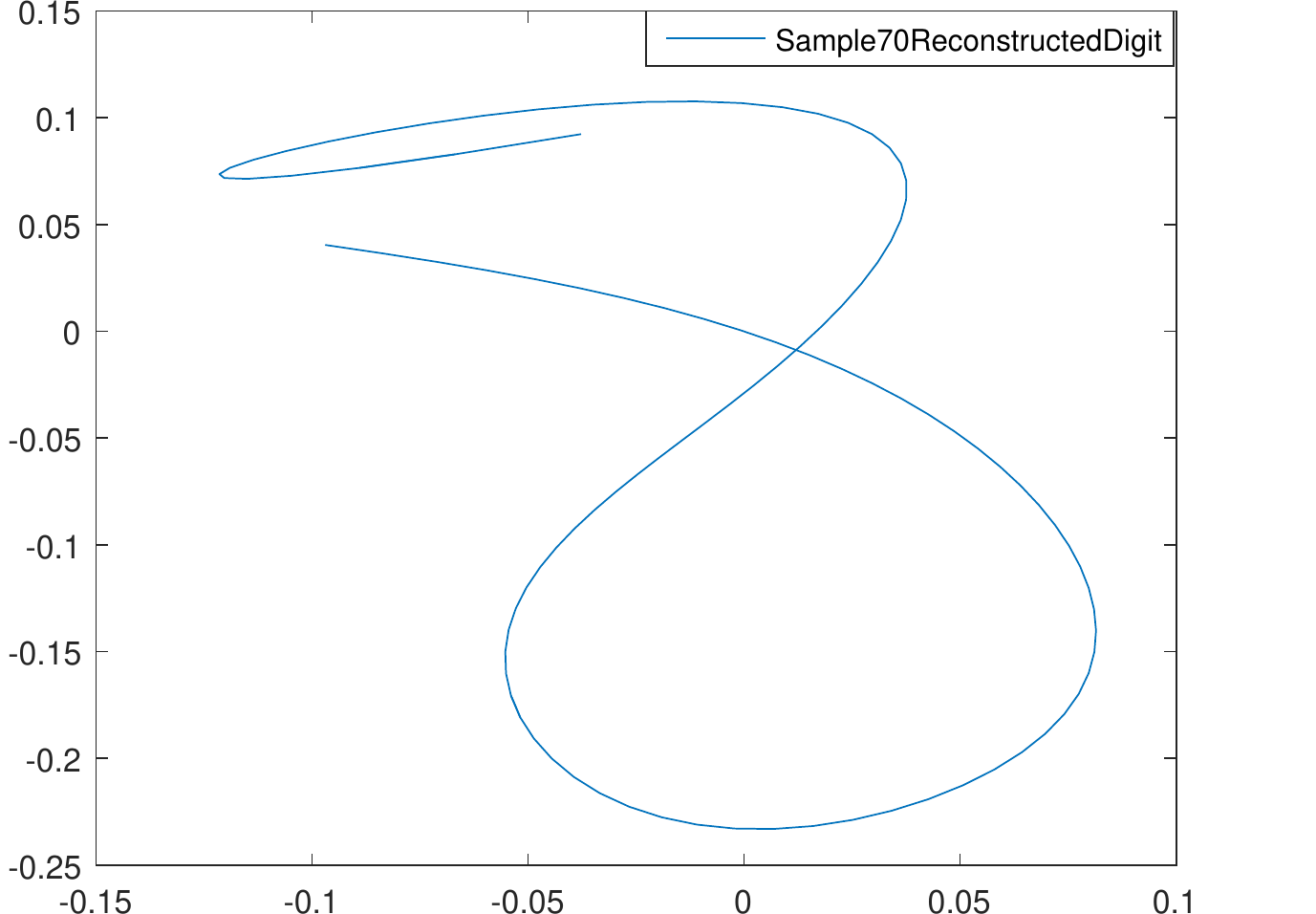}
\caption{Sample 70, reconstructed digit}
\end{subfigure}

\begin{subfigure}{.5\textwidth}
\centering
\includegraphics[trim={4cm 10cm 3cm 10cm},clip,width=0.9\linewidth]{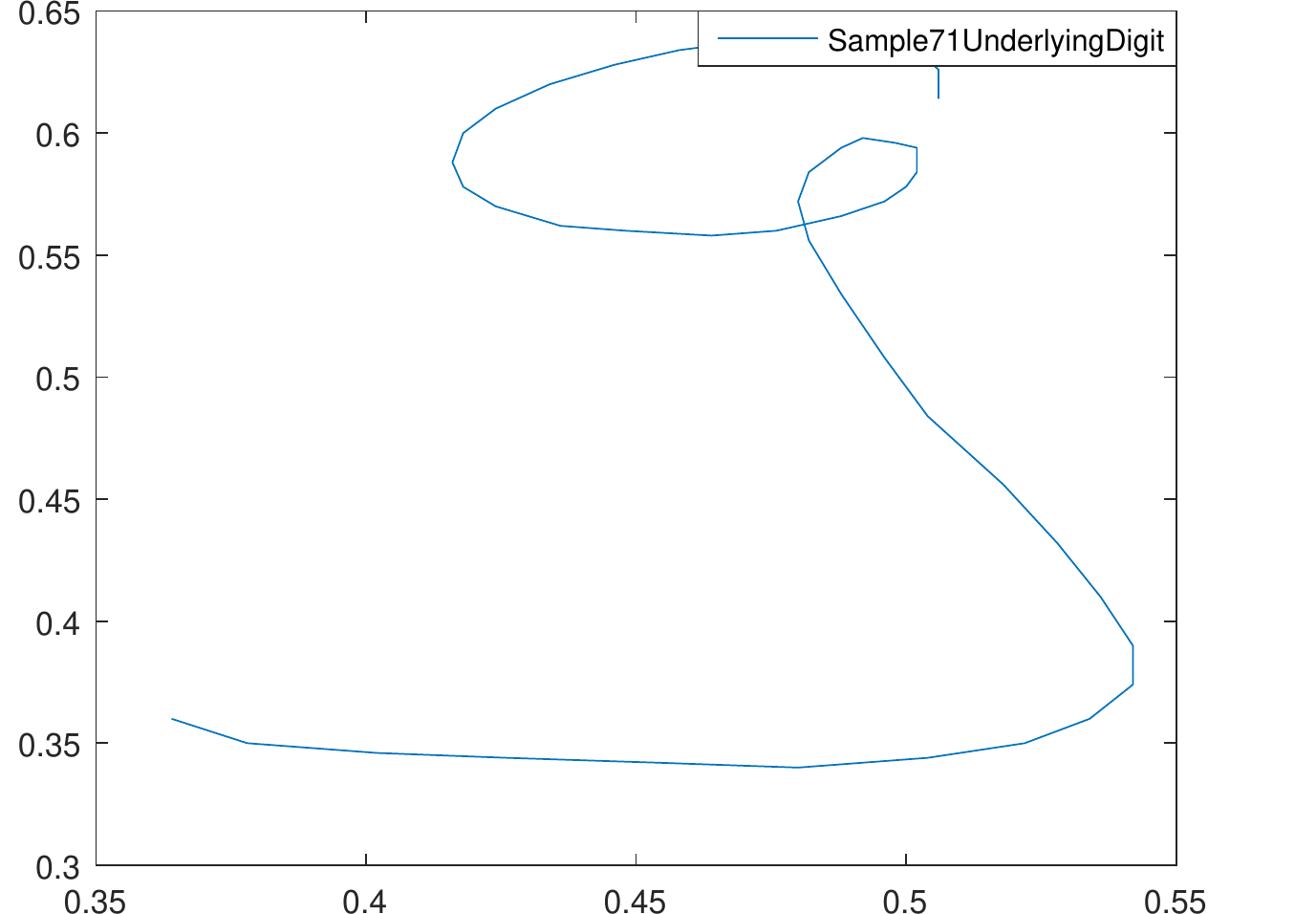}
\caption{Sample 71, underlying digit}
\end{subfigure}%
\begin{subfigure}{.5\textwidth}
\centering
\includegraphics[trim={4cm 10cm 3cm 10cm},clip,width=0.9\linewidth]{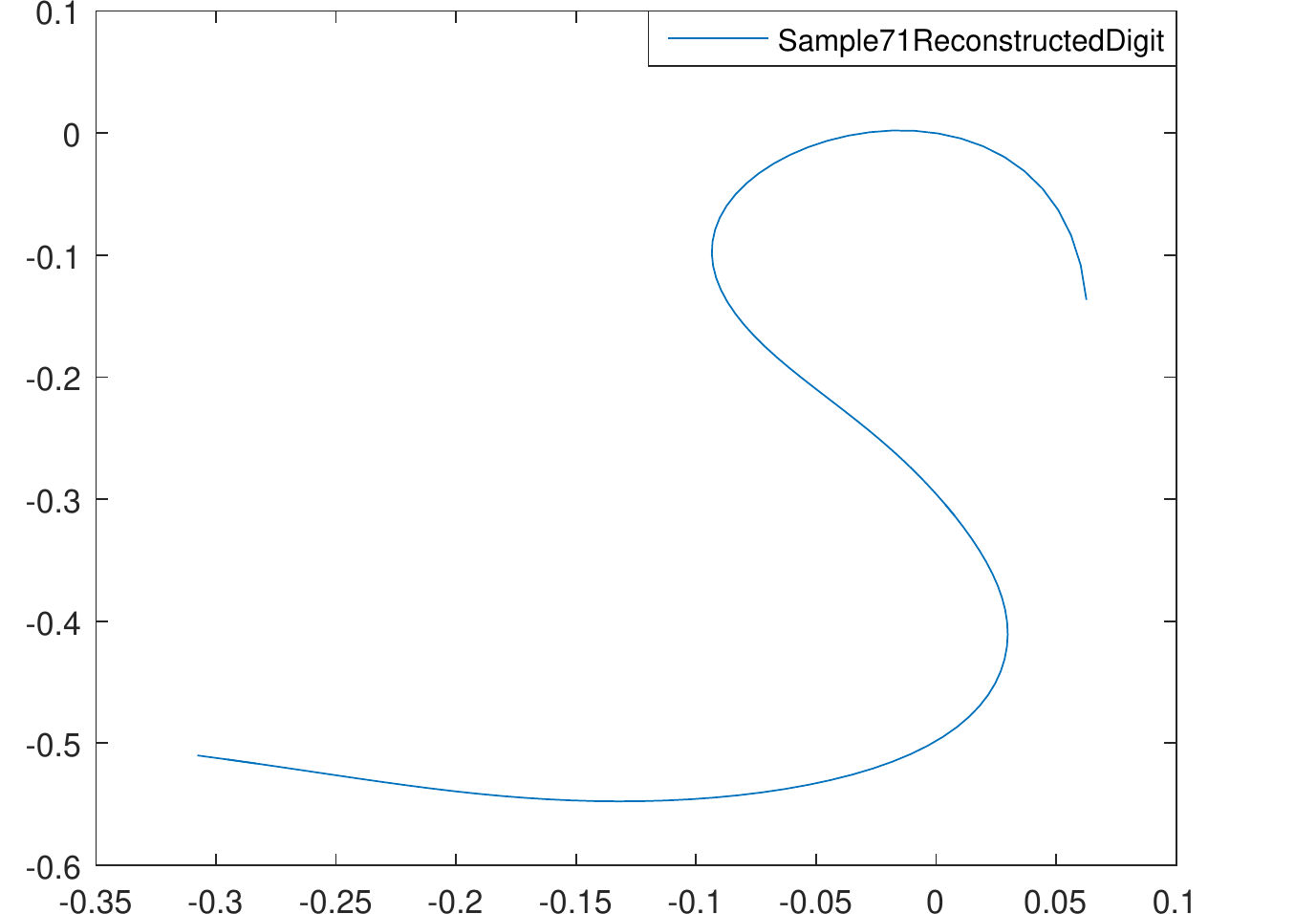}
\caption{Sample 71, reconstructed digit}
\end{subfigure}

\begin{subfigure}{.5\textwidth}
\centering
\includegraphics[trim={4cm 10cm 3cm 10cm},clip,width=0.9\linewidth]{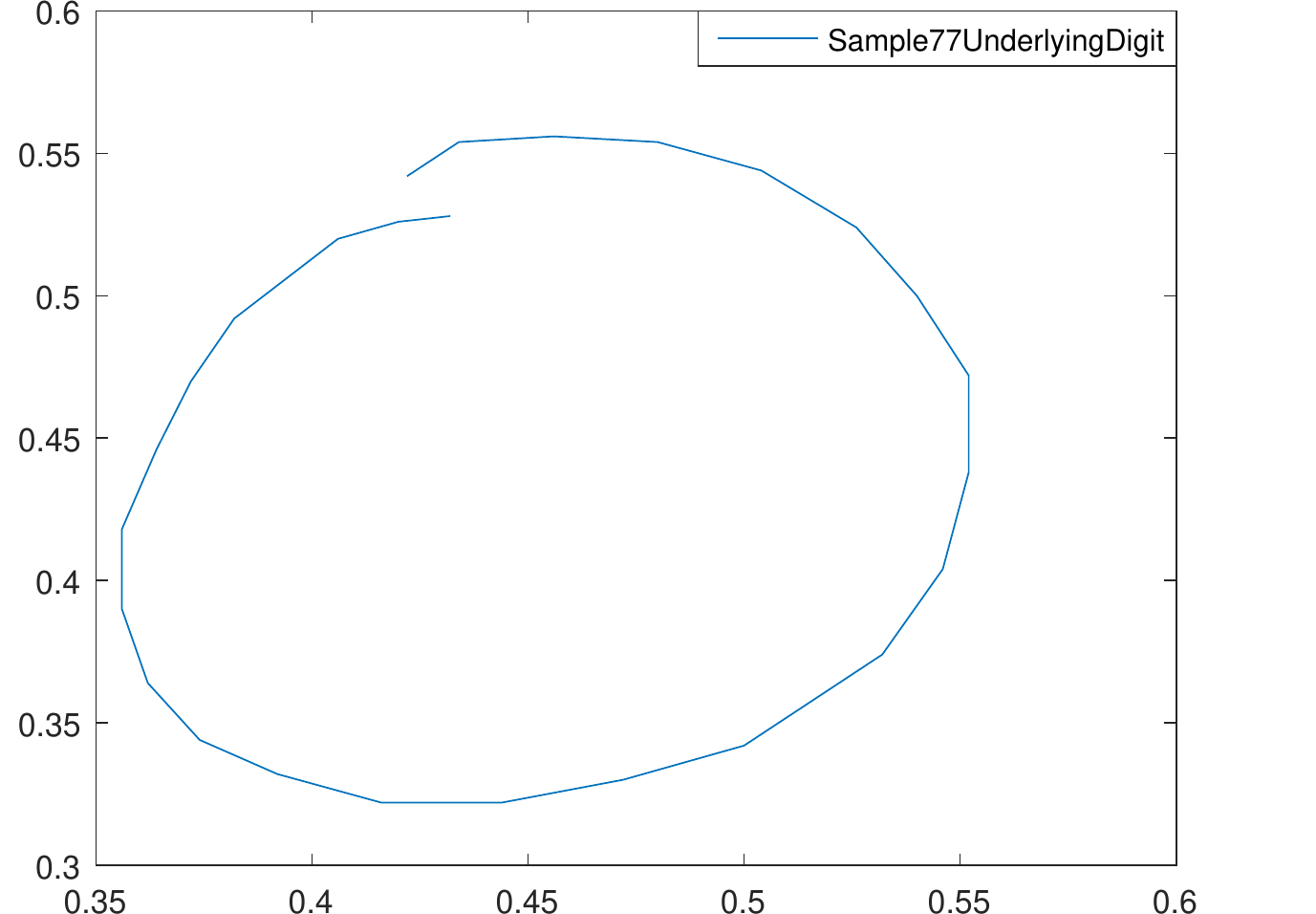}
\caption{Sample 77, underlying digit}
\end{subfigure}%
\begin{subfigure}{.5\textwidth}
\centering
\includegraphics[trim={4cm 10cm 3cm 10cm},clip,width=0.9\linewidth]{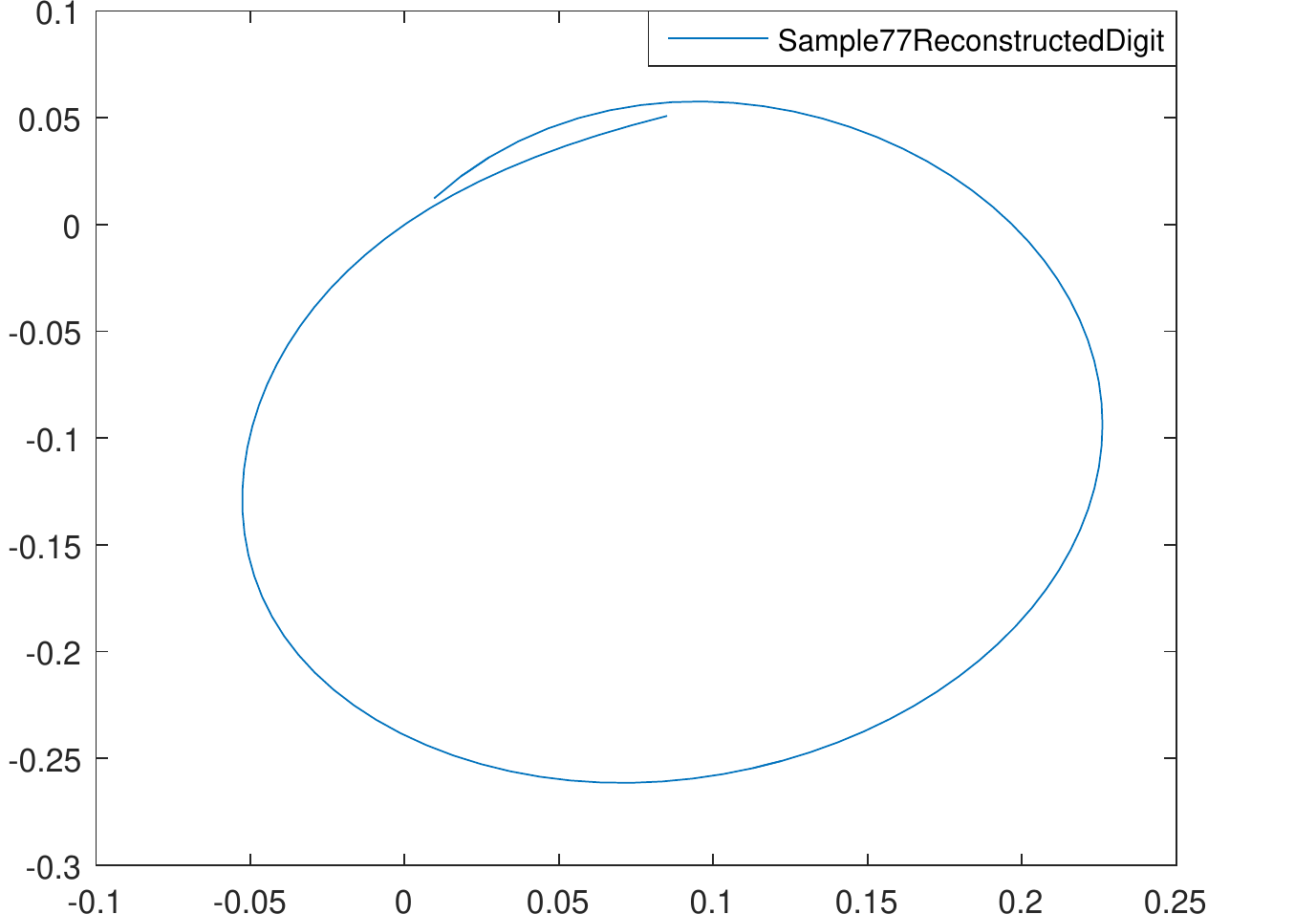}
\caption{Sample 77, reconstructed digit}
\end{subfigure}
\caption{Reconstruction of digits from the data set \cite{Dua:2017} using signature level $9$ and $10$}
\label{robustdataset-3}
\end{figure}
\clearpage

\begin{figure}[t]
\begin{subfigure}{.5\textwidth}
\centering
\includegraphics[trim={4cm 10cm 3cm 10cm},clip,width=0.9\linewidth]{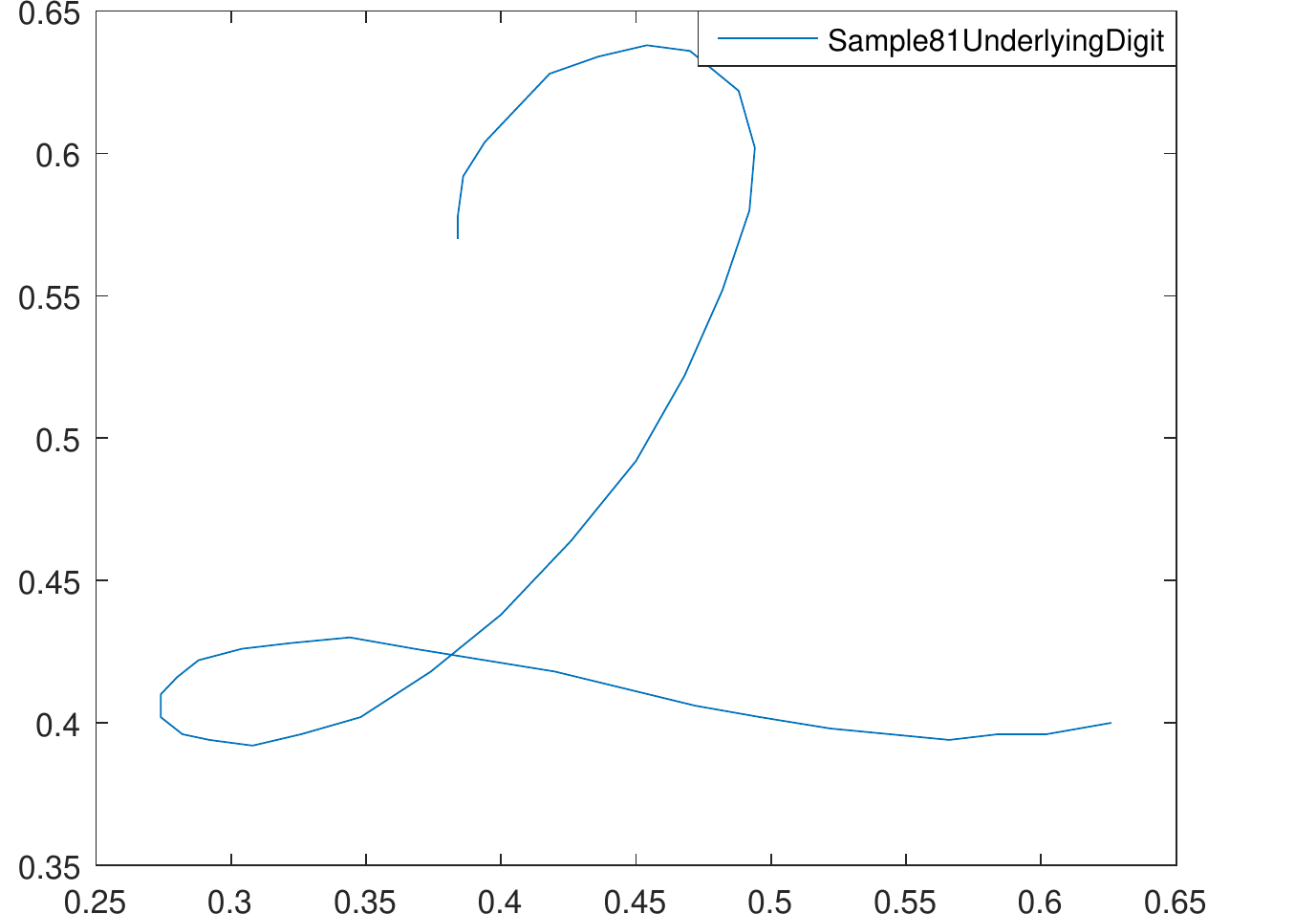}
\caption{Sample 81, underlying digit}
\end{subfigure}%
\begin{subfigure}{.5\textwidth}
\centering
\includegraphics[trim={4cm 10cm 3cm 10cm},clip,width=0.9\linewidth]{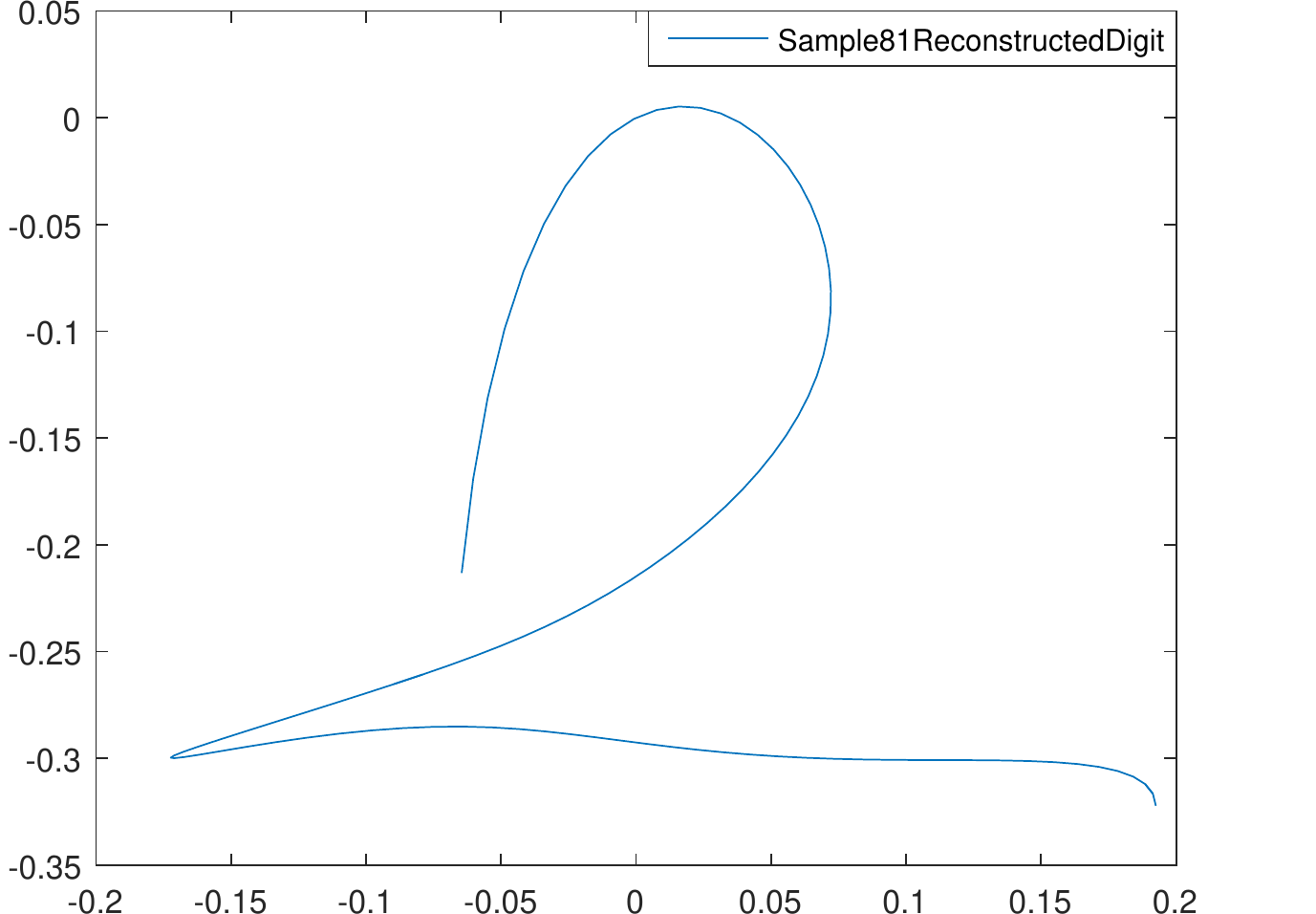}
\caption{Sample 81, reconstructed digit}
\end{subfigure}

\begin{subfigure}{.5\textwidth}
\centering
\includegraphics[trim={4cm 10cm 3cm 10cm},clip,width=0.9\linewidth]{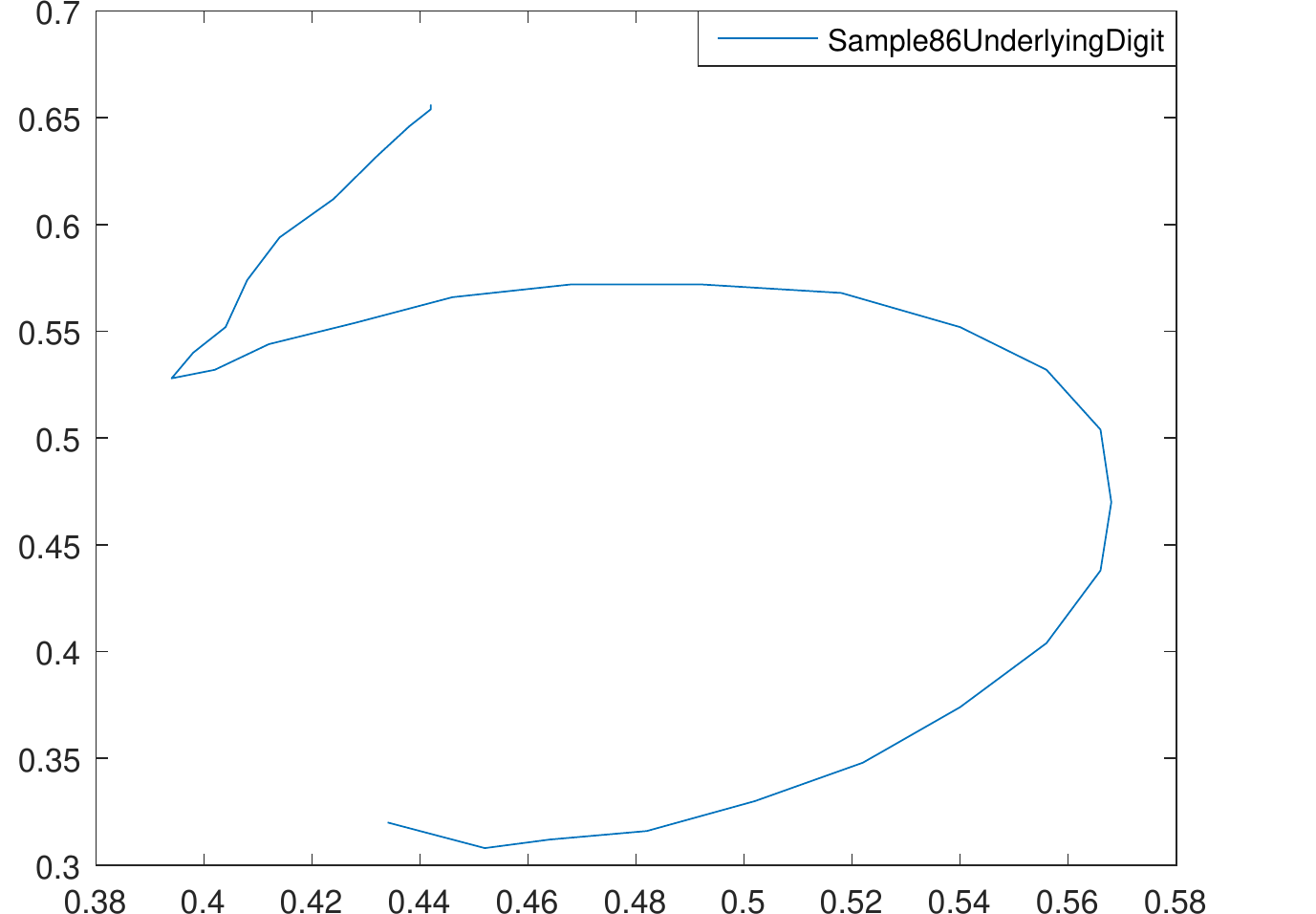}
\caption{Sample 86, underlying digit}
\end{subfigure}%
\begin{subfigure}{.5\textwidth}
\centering
\includegraphics[trim={4cm 10cm 3cm 10cm},clip,width=0.9\linewidth]{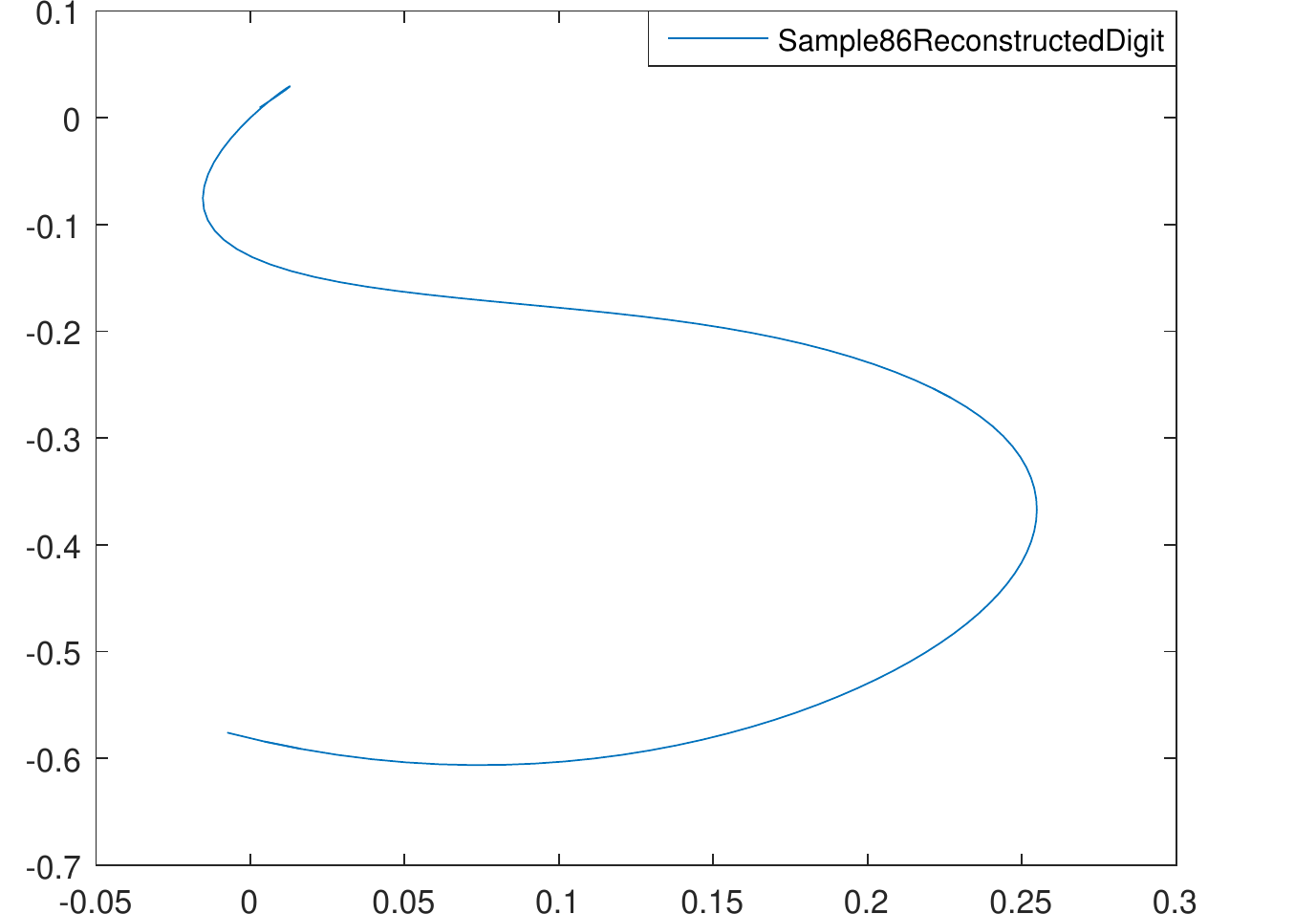}
\caption{Sample 86, reconstructed digit}
\end{subfigure}

\begin{subfigure}{.5\textwidth}
\centering
\includegraphics[trim={4cm 10cm 3cm 10cm},clip,width=0.9\linewidth]{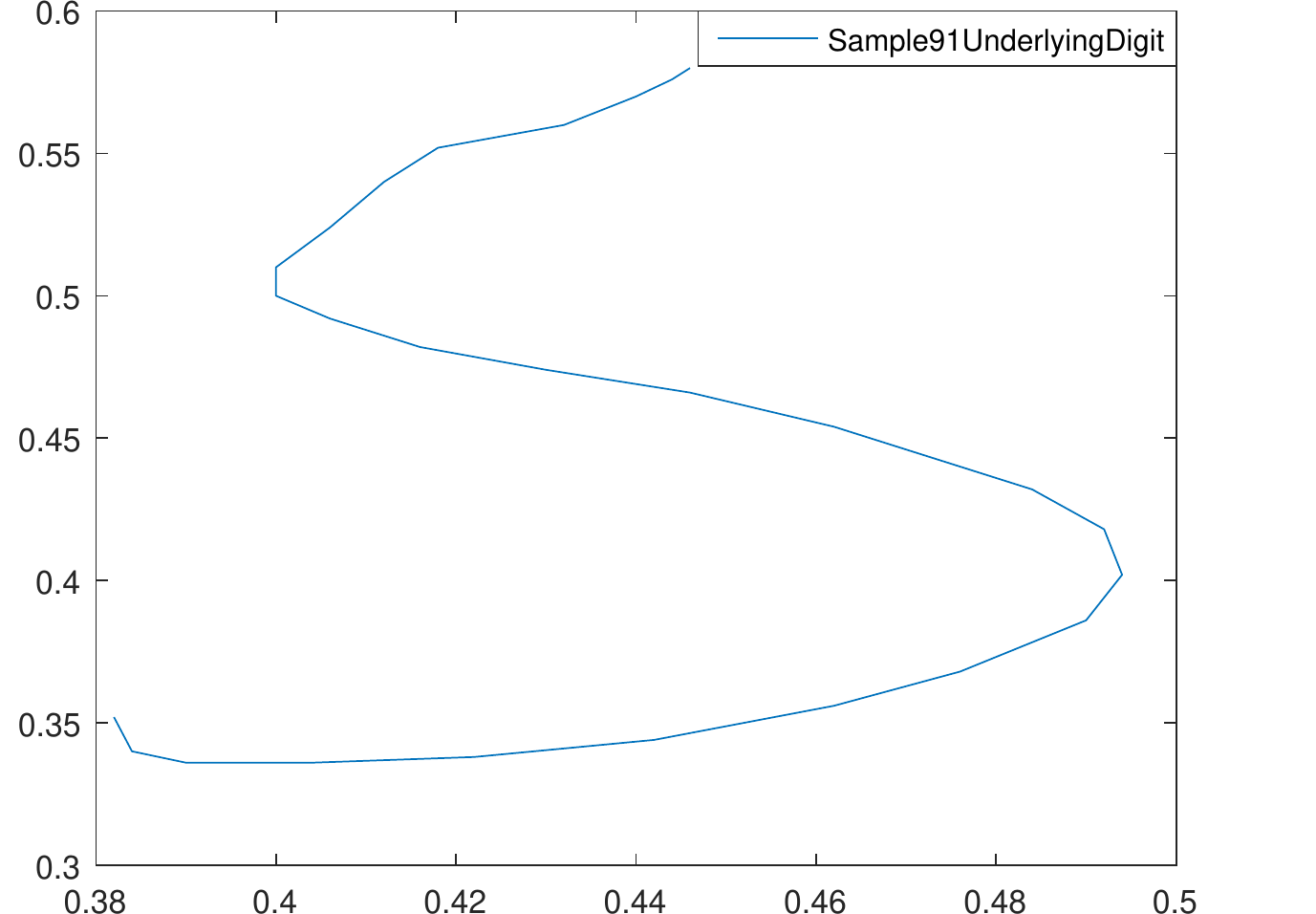}
\caption{Sample 91, underlying digit}
\end{subfigure}%
\begin{subfigure}{.5\textwidth}
\centering
\includegraphics[trim={4cm 10cm 3cm 10cm},clip,width=0.9\linewidth]{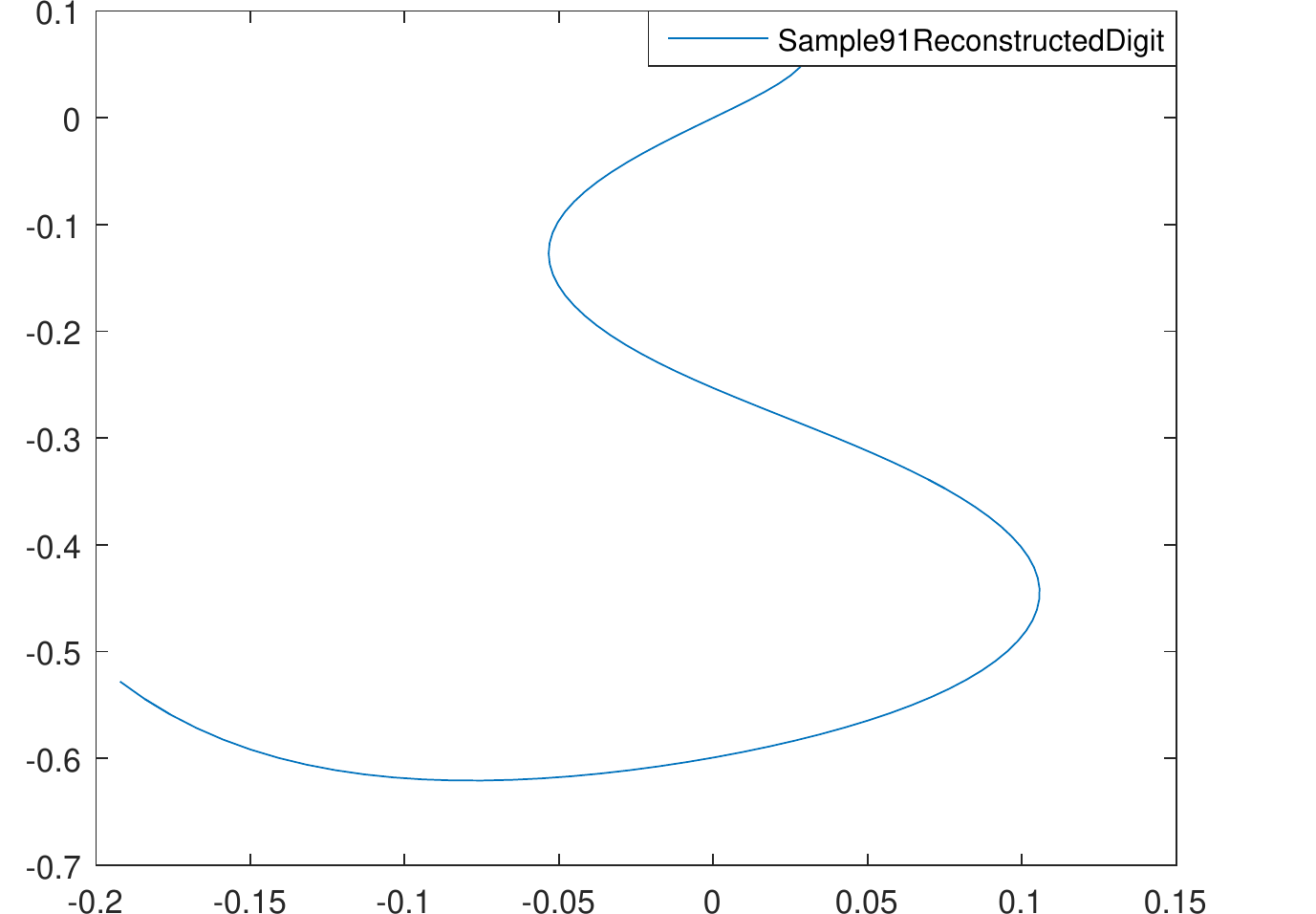}
\caption{Sample 91, reconstructed digit}
\end{subfigure}

\begin{subfigure}{.5\textwidth}
\centering
\includegraphics[trim={4cm 10cm 3cm 10cm},clip,width=0.9\linewidth]{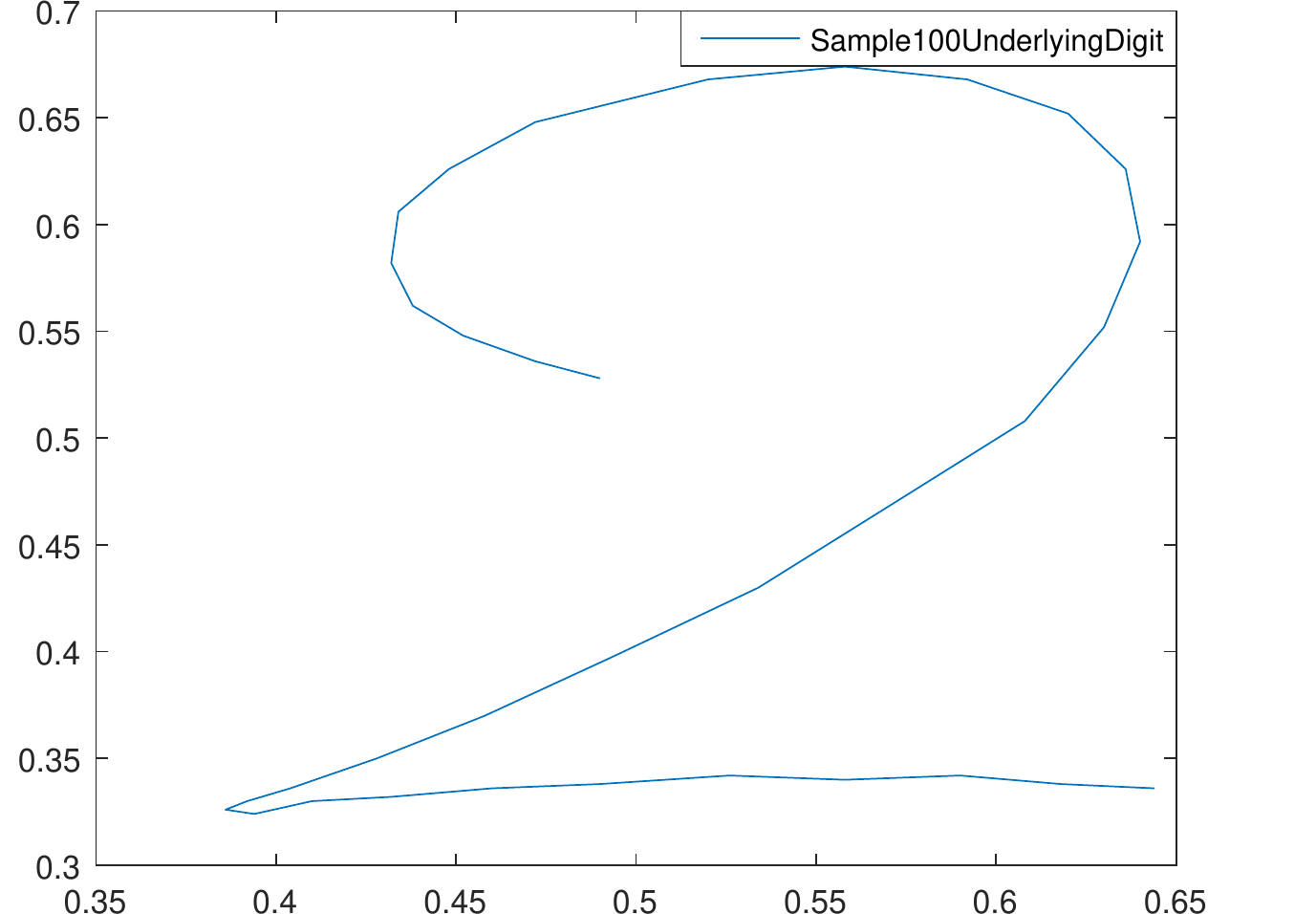}
\caption{Sample 100, underlying digit}
\end{subfigure}%
\begin{subfigure}{.5\textwidth}
\centering
\includegraphics[trim={4cm 10cm 4cm 10cm},clip,width=0.9\linewidth]{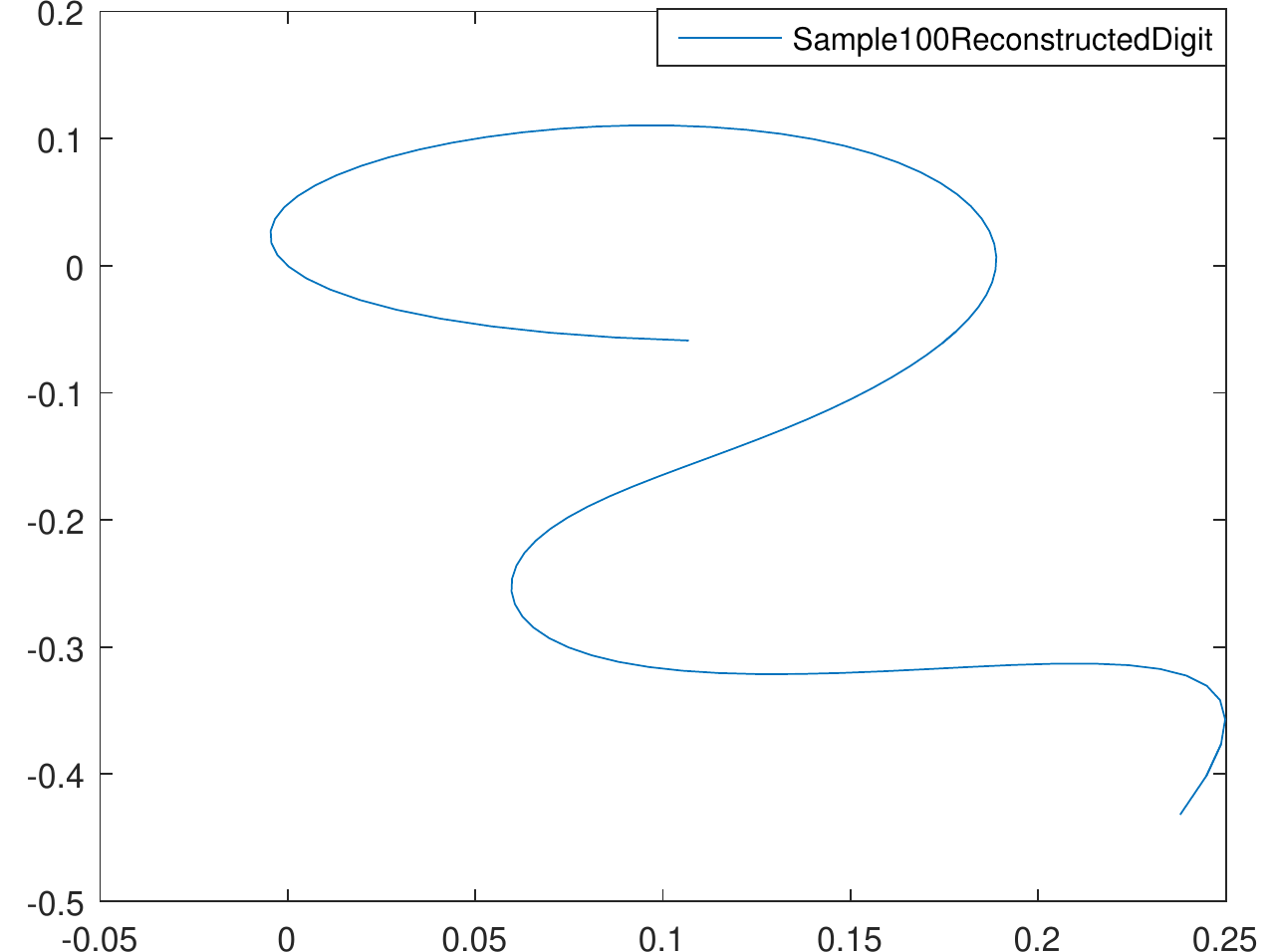}
\caption{Sample 100, reconstructed digit}
\end{subfigure}
\caption{Reconstruction of digits from the data set \cite{Dua:2017} using signature level $9$ and $10$}
\label{robustdataset-4}
\end{figure}
\clearpage

\begin{figure}[t]
\begin{subfigure}{.5\textwidth}
\centering
\includegraphics[trim={4cm 10cm 3cm 10cm},clip,width=0.9\linewidth]{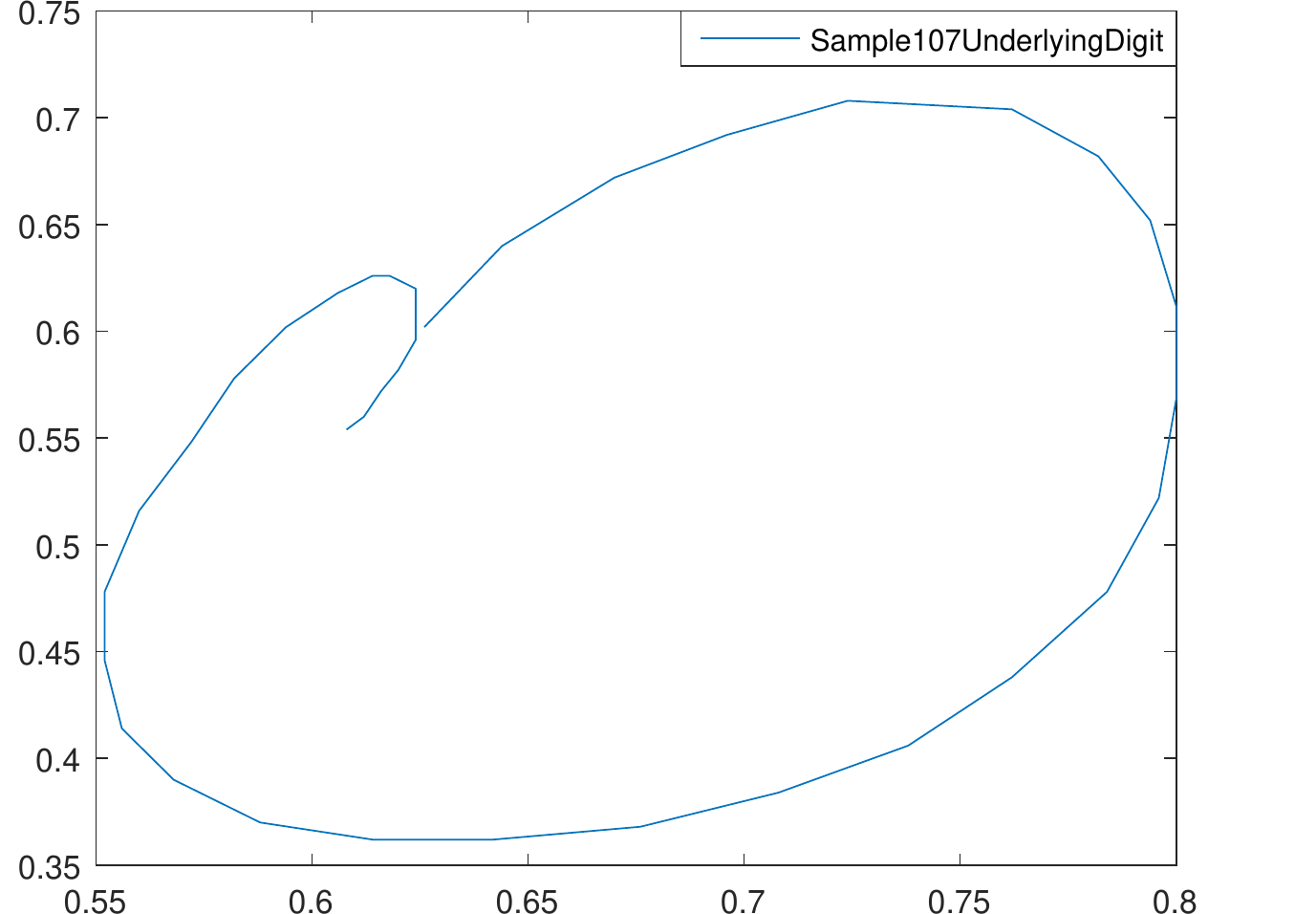}
\caption{Sample 107, underlying digit}
\end{subfigure}%
\begin{subfigure}{.5\textwidth}
\centering
\includegraphics[trim={4cm 10cm 3cm 10cm},clip,width=0.9\linewidth]{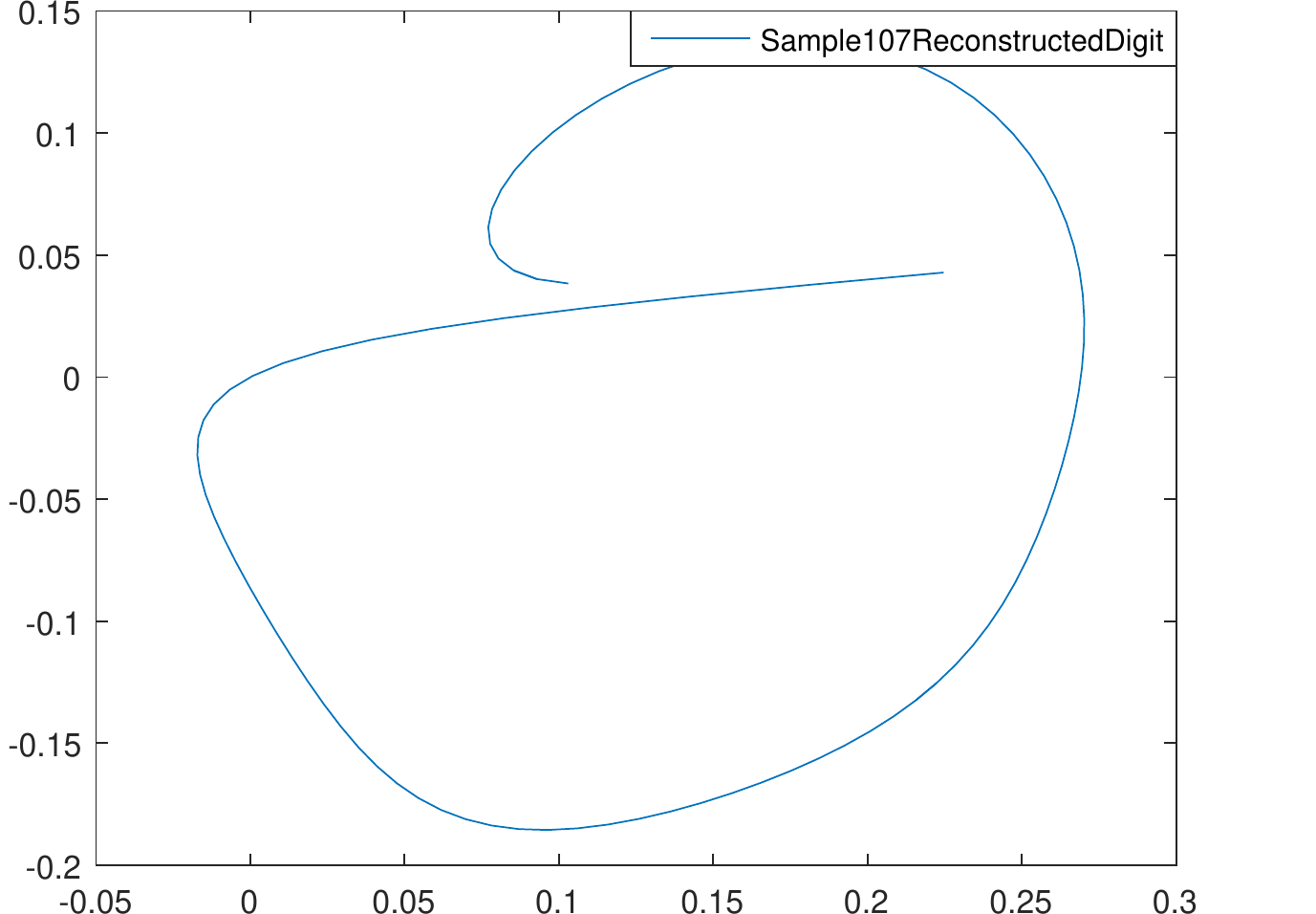}
\caption{Sample 107, reconstructed digit}
\end{subfigure}

\begin{subfigure}{.5\textwidth}
\centering
\includegraphics[trim={4cm 10cm 3cm 10cm},clip,width=0.9\linewidth]{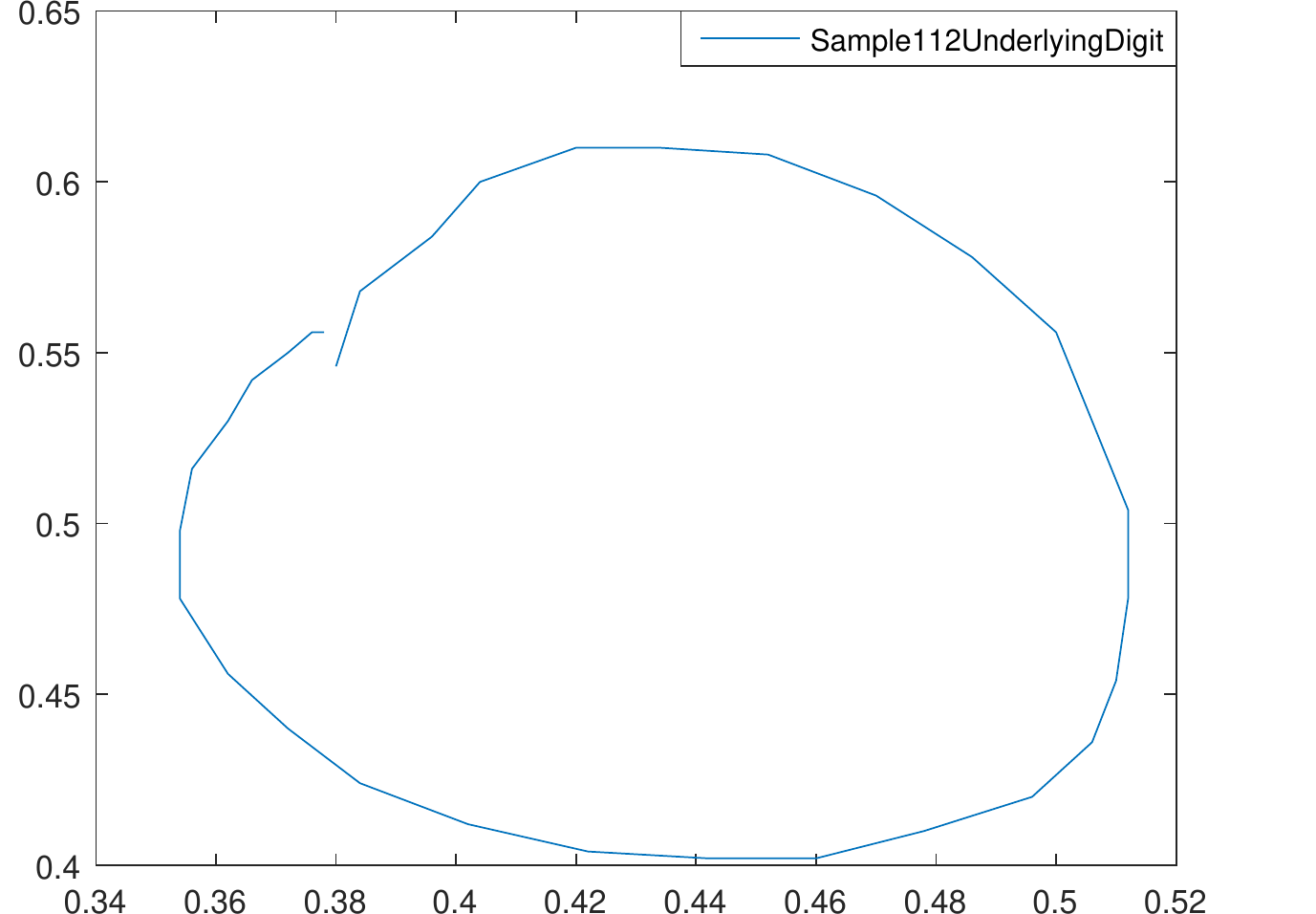}
\caption{Sample 112, underlying digit}
\end{subfigure}%
\begin{subfigure}{.5\textwidth}
\centering
\includegraphics[trim={4cm 10cm 3cm 10cm},clip,width=0.9\linewidth]{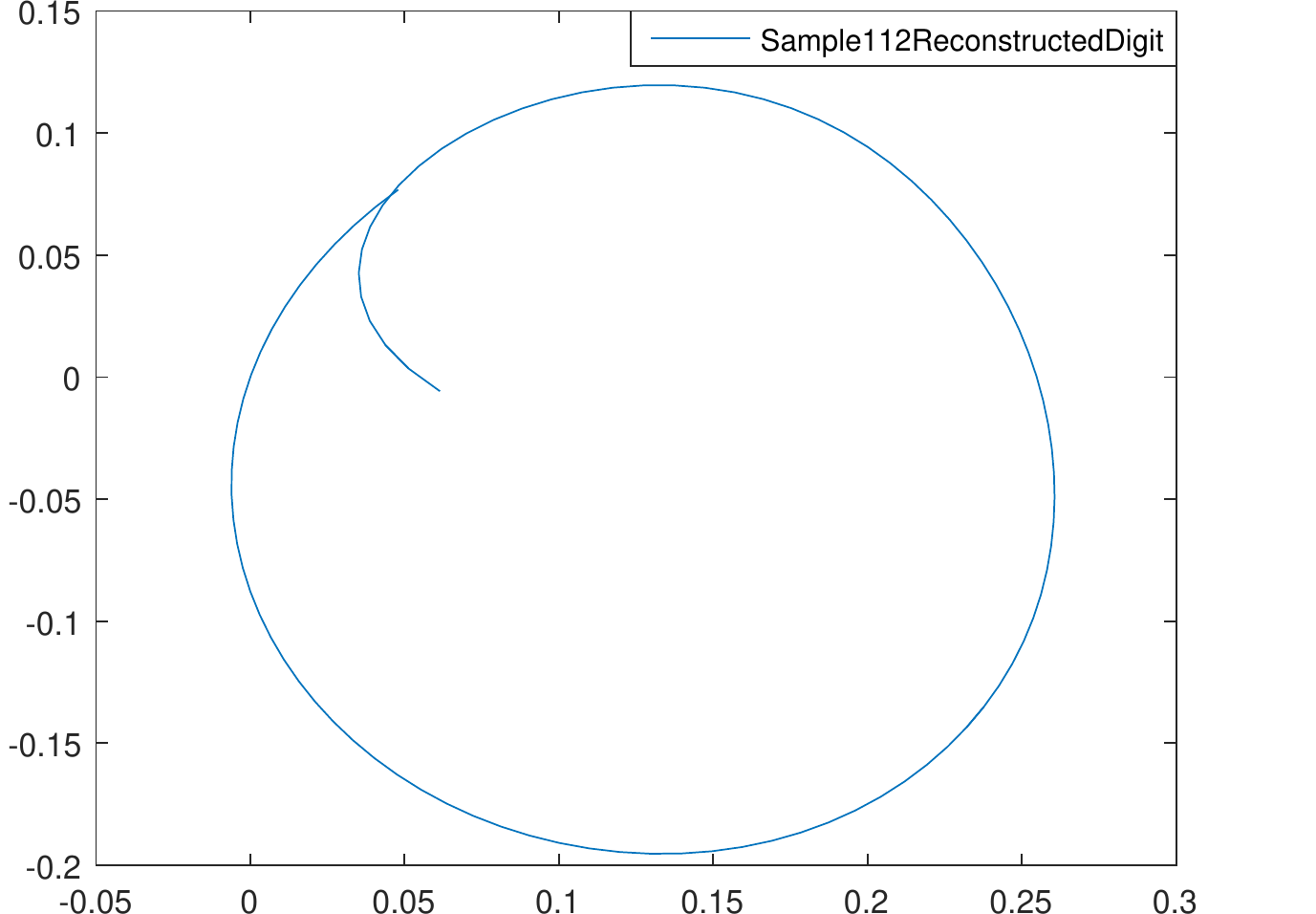}
\caption{Sample 112, reconstructed digit}
\end{subfigure}

\begin{subfigure}{.5\textwidth}
\centering
\includegraphics[trim={4cm 10cm 3cm 10cm},clip,width=0.9\linewidth]{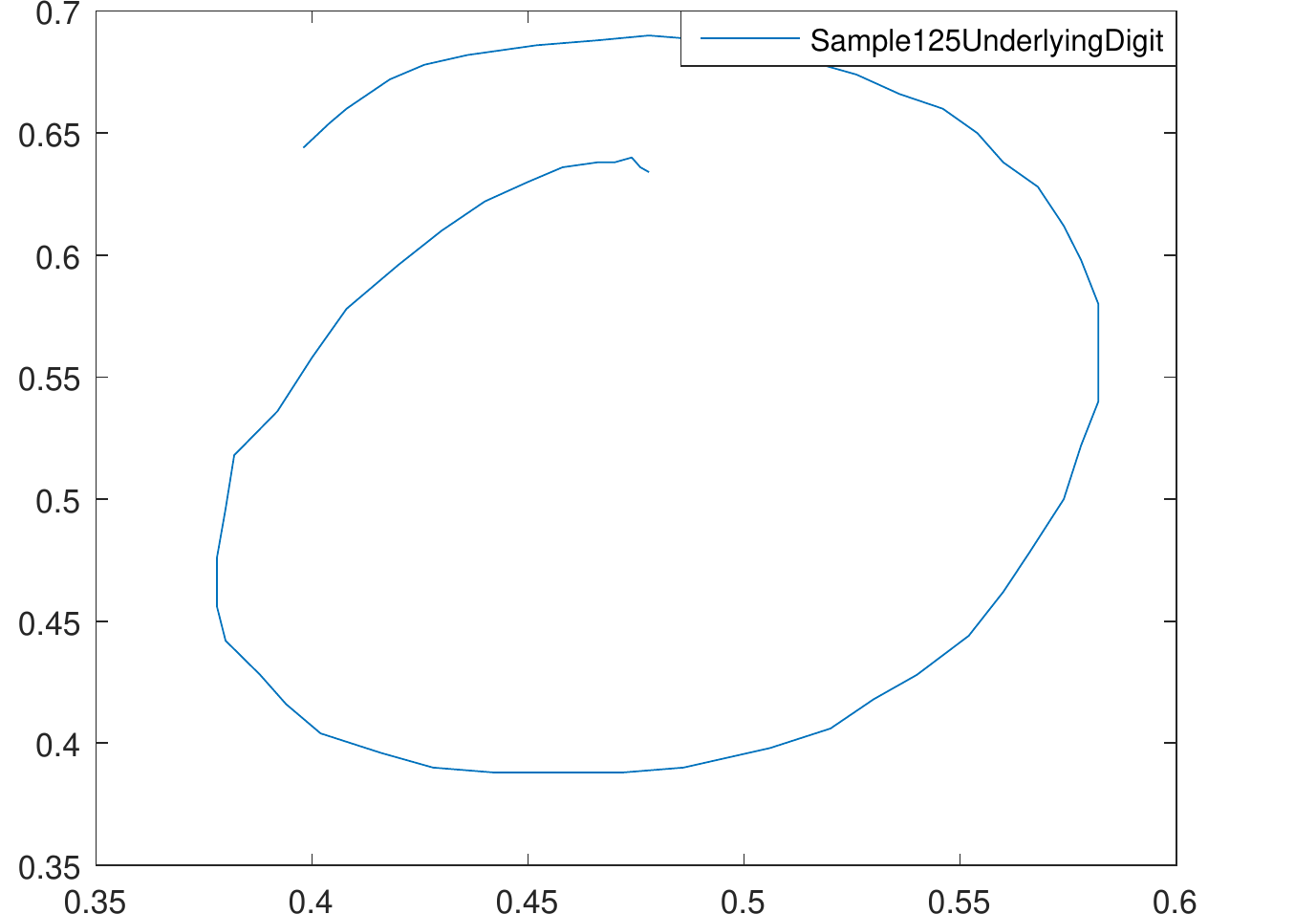}
\caption{Sample 125, underlying digit}
\end{subfigure}%
\begin{subfigure}{.5\textwidth}
\centering
\includegraphics[trim={4cm 10cm 3cm 10cm},clip,width=0.9\linewidth]{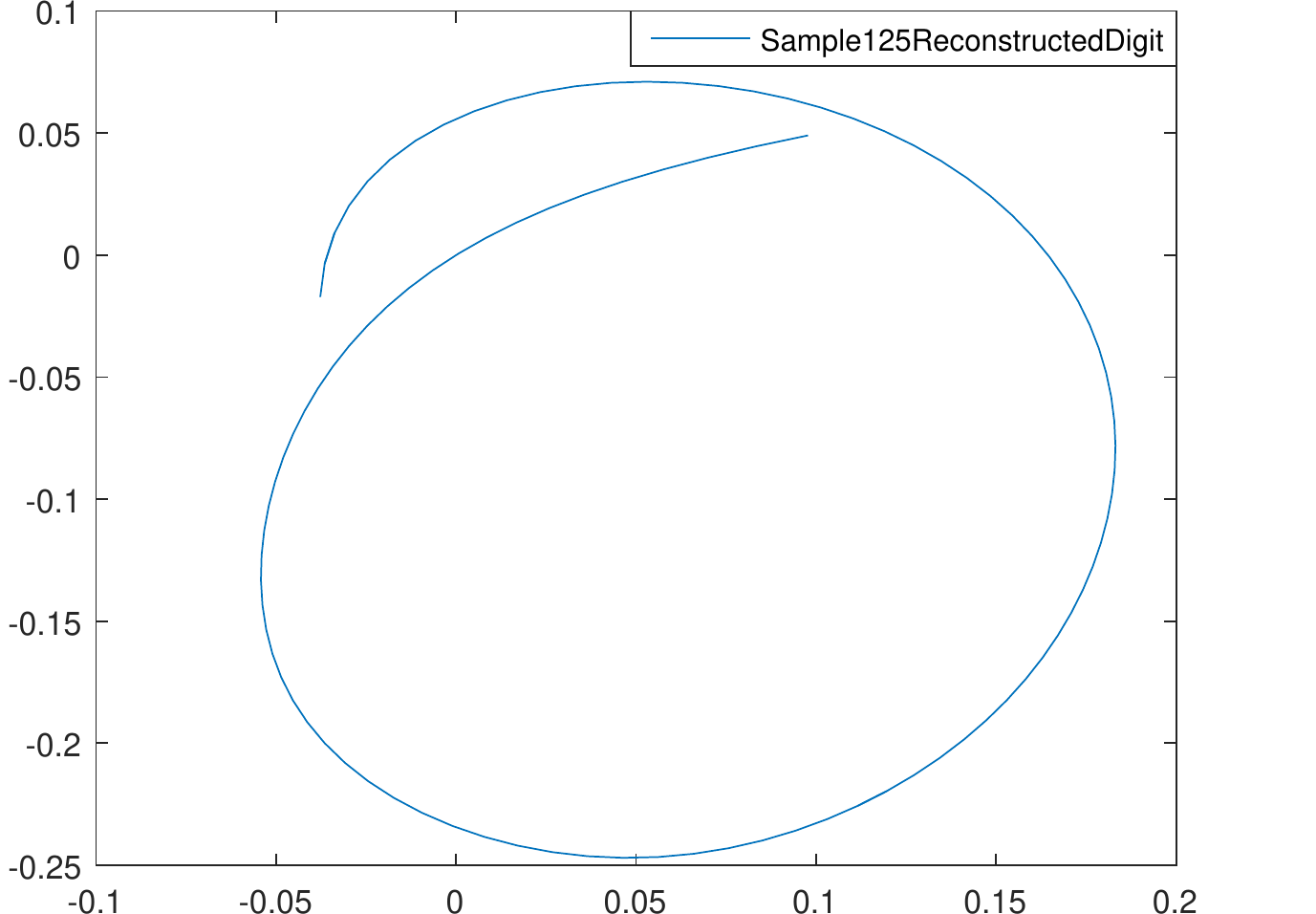}
\caption{Sample 125, reconstructed digit}
\end{subfigure}

\begin{subfigure}{.5\textwidth}
\centering
\includegraphics[trim={4cm 10cm 3cm 10cm},clip,width=0.9\linewidth]{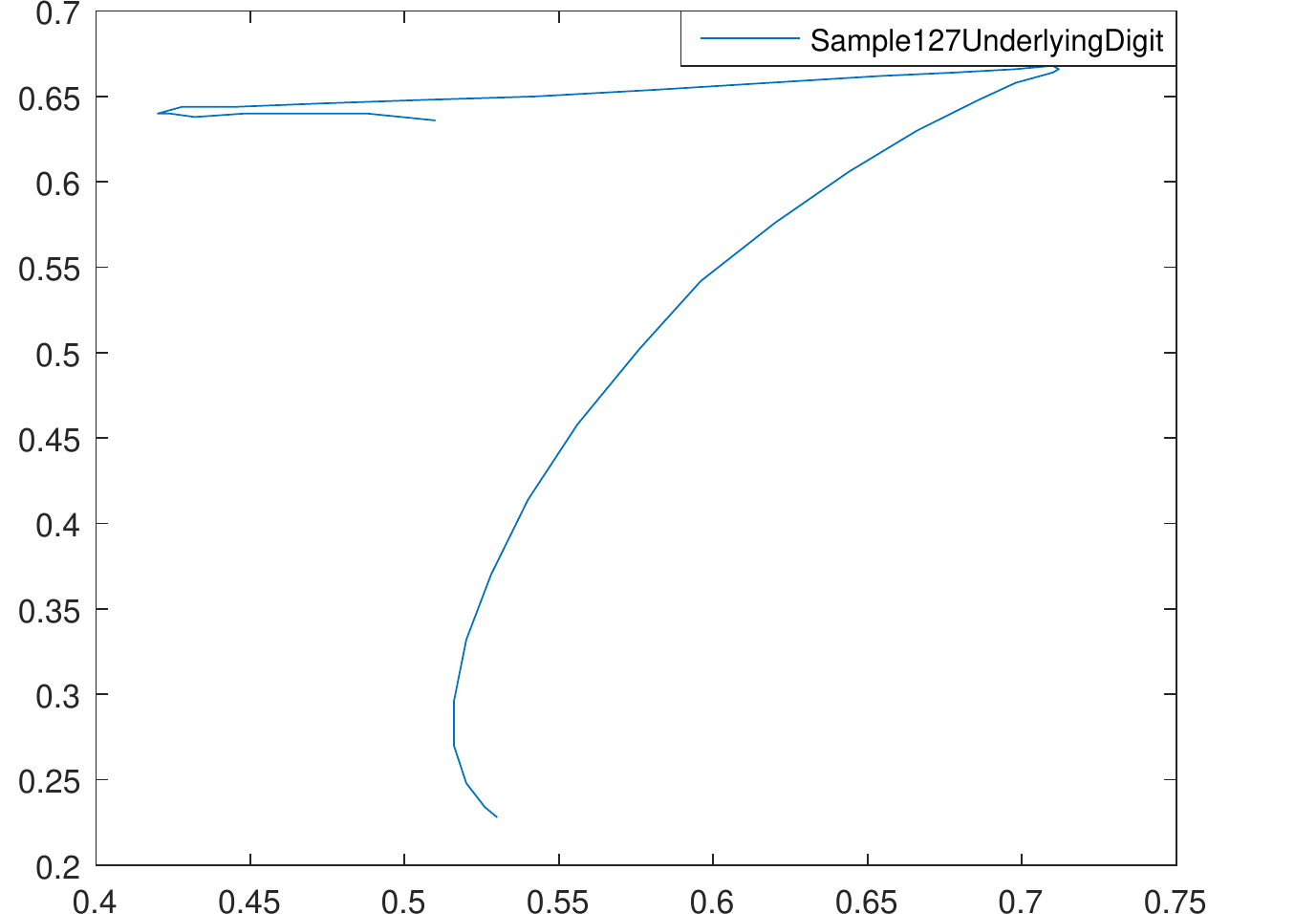}
\caption{Sample 127, underlying digit}
\end{subfigure}%
\begin{subfigure}{.5\textwidth}
\centering
\includegraphics[trim={4cm 10cm 3cm 10cm},clip,width=0.9\linewidth]{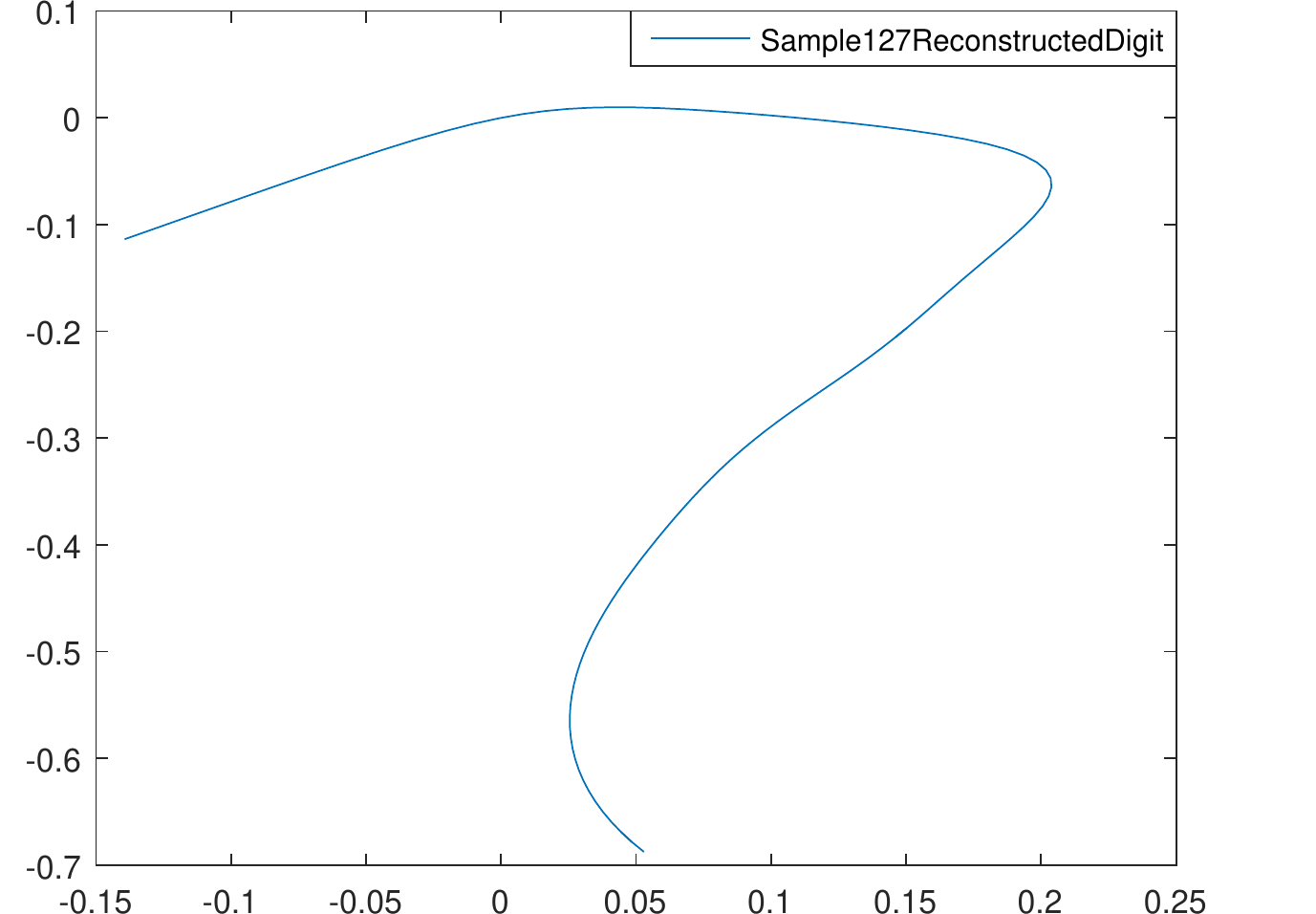}
\caption{Sample 127, reconstructed digit}
\end{subfigure}
\caption{Reconstruction of digits from the data set \cite{Dua:2017} using signature level $9$ and $10$}
\label{robustdataset-5}
\end{figure}

\begin{rmk}
From a computational point of view, in general if we want to use the insertion method to invert the signature, we need a nonlinear optimisation solver. However most of such solvers require a good initial guess. Hence when doing computation, we need to keep in mind that such factors may affect the results.\newline
If we compare the insertion method with the symmetrisation method described in \cite{chang2017signature}, from computation we saw that the insertion method is better in terms of efficiency, but the symmetrisation method gives more accurate approximation results.
\end{rmk}
\section{Conclusions}
In this article we have developed a practical algorithm for inverting the signature of a path by inserting elements into terms of the signature and comparing with other terms in the signature, and we have demonstrated computational results for inverting the signature of a piecewise linear path. In essence, the insertion algorithm depends on the relation
\begin{align*}
\left\lVert x-y\right\rVert=\frac{\left\lVert I_{p,n}(x)-I_{p,n}(y)\right\rVert}{\left\lVert \bar{S}_n\right\rVert}.
\end{align*}
Therefore, there is a possibility that the insertion method can be extended for inverting the signature of a more complicated path if $\left\lVert I_{p,n}(x)-I_{p,n}(y)\right\rVert$ decays faster than the norm of the normalised signature, $\left\lVert \bar{S}_n\right\rVert$.\newline
Moreover, we can see from the analysis that understanding the decay of the signature can be very helpful for signature inversion, therefore finding a lower bound for the terms in the signature of a path has its impacts on inverting the signature.\newline
In conclusion, the insertion method described in this article has potential in inverting the signature of a more general path, which is an interesting topic to study.
\bibliography{insertionref}        

\begin{thebibliography}{10}

\bibitem{Anderson:1999:LUG:323215}
E.~Anderson, Z.~Bai, C.~Bischof, L.~S. Blackford, J.~Demmel, Jack~J. Dongarra,
  J.~Du~Croz, S.~Hammarling, A.~Greenbaum, A.~McKenney, and D.~Sorensen.
\newblock {\em {LAPACK} Users' Guide (Third Ed.)}.
\newblock Society for Industrial and Applied Mathematics, Philadelphia, PA,
  USA, 1999.

\bibitem{boedihardjo2018non}
Horatio Boedihardjo and Xi~Geng.
\newblock A non-vanishing property for the signature of a path.
\newblock {\em arXiv preprint arXiv:1808.05903}, 2018.

\bibitem{libalgebra}
Stephen Buckley, Djalil Chafai, Lajos Gyurko, Arend Janssen, and Terry Lyons.
\newblock {Libalgebra C++ Package, Computational Rough Paths}.
\newblock \url{https://sourceforge.net/projects/coropa/}.

\bibitem{canonne2017}
Cl\'ement Canonne.
\newblock {A short note on Poisson tail bounds}.
\newblock
  \url{http://www.cs.columbia.edu/~ccanonne/files/misc/2017-poissonconcentration.pdf},
  2017.

\bibitem{chang2017signature}
Jiawei Chang, Nick Duffield, Hao Ni, Weijun Xu, et~al.
\newblock Signature inversion for monotone paths.
\newblock {\em Electronic Communications in Probability}, 22, 2017.

\bibitem{chang2018super}
Jiawei Chang, Terry Lyons, and Hao Ni.
\newblock Super-multiplicativity and a lower bound for the decay of the
  signature of a path of finite length.
\newblock {\em Comptes Rendus Mathematique}, 2018.

\bibitem{chen1958integration}
Kuo-sai Chen.
\newblock Integration of paths--a faithful representation of paths by
  non-commutative formal power series.
\newblock {\em Transactions of the American Mathematical Society},
  89(2):395--407, 1958.

\bibitem{chen1957integration}
Kuo-Tsai Chen.
\newblock {Integration of paths, geometric invariants and a generalized
  Baker-Hausdorff formula}.
\newblock {\em Annals of Mathematics}, pages 163--178, 1957.

\bibitem{chen1977iterated}
Kuo-Tsai Chen.
\newblock Iterated path integrals.
\newblock {\em Bulletin of the American Mathematical Society}, 83(5):831--879,
  1977.

\bibitem{Dua:2017}
Dua Dheeru and Efi Karra~Taniskidou.
\newblock {UCI Machine Learning Repository}.
\newblock University of California, Irvine, School of Information and Computer
  Sciences, 2017.
\newblock \url{http://archive.ics.uci.edu/ml}.

\bibitem{geng2017reconstruction}
Xi~Geng.
\newblock Reconstruction for the signature of a rough path.
\newblock {\em Proceedings of the London Mathematical Society},
  114(3):495--526, 2017.

\bibitem{hambly2010uniqueness}
Ben Hambly and Terry Lyons.
\newblock Uniqueness for the signature of a path of bounded variation and the
  reduced path group.
\newblock {\em Annals of Mathematics}, pages 109--167, 2010.

\bibitem{hoeffding1963probability}
Wassily Hoeffding.
\newblock Probability inequalities for sums of bounded random variables.
\newblock {\em Journal of the American statistical association},
  58(301):13--30, 1963.

\bibitem{lyons2007differential}
Terry~J Lyons, Michael Caruana, and Thierry L{\'e}vy.
\newblock {\em Differential equations driven by rough paths}.
\newblock Springer, 2007.

\bibitem{lyons2017hyperbolic}
Terry~J Lyons and Weijun Xu.
\newblock Hyperbolic development and inversion of signature.
\newblock {\em Journal of Functional Analysis}, 272(7):2933--2955, 2017.

\bibitem{lyons2014inverting}
Terry~J Lyons and Weijun Xu.
\newblock Inverting the signature of a path.
\newblock {\em Journal of the European Mathematical Society}, 20(7):1655--1687,
  2018.

\bibitem{pfeffer2018learning}
Max Pfeffer, Anna Seigal, and Bernd Sturmfels.
\newblock Learning paths from signature tensors.
\newblock {\em arXiv preprint arXiv:1809.01588}, 2018.

\end{thebibliography}
\bibliographystyle{plain} 
\end{document}